\documentclass[a4,12pt]{article}

\usepackage[english]{babel}
\usepackage[utf8]{inputenc}

\usepackage[margin=1.13 in, top=1.1 in, bottom= 1.2 in]{geometry}
\usepackage{tocloft}

\usepackage{amsfonts}
\usepackage{mathrsfs}
\usepackage{bbm}
\usepackage{latexsym}
\usepackage{amssymb}
\usepackage{mathtools}

\usepackage{enumitem}
\setlist{nolistsep} 	
\usepackage{amsthm}

\usepackage{xcolor}
\definecolor{Color1}{rgb}{0.0, 0.42, 0.47}
\definecolor{Color2}{rgb}{0.78, 0.11, 0.0}

\usepackage{titlesec}
\titleformat{\section}
  {\large\center\bfseries}
  {\thesection.}{.7em}{}
\titlespacing*{\section}{0pt}{4ex}{2ex}
\titleformat{\subsection}
  {\center\bfseries}
  {\thesubsection.}{.7em}{}
\titlespacing*{\subsection}{0pt}{3.5ex}{1.5ex}
\titleformat{\subsubsection}
  {\center\bfseries}
  {\thesubsubsection.}{.7em}{}
\titlespacing*{\subsubsection}{0pt}{3.5ex}{1.5ex}

\addto\captionsenglish{}

\usepackage{titling}
\usepackage[linktocpage=true]{hyperref}
\usepackage[capitalize]{cleveref}
\hypersetup{citecolor = Color1,colorlinks,
			linkcolor = black,
			urlcolor = Color2}

\newtheoremstyle{plain}{3mm}{3mm}{\slshape}{}{\bfseries}{.}{.5em}{}
\newtheoremstyle{definition}{2mm}{2mm}{}{}{\bfseries}{.}{.5em}{}
\theoremstyle{plain} 
	
\newtheorem{theorem}{Theorem}[section]

\newtheorem{proposition}[theorem]{Proposition}

\newtheorem{lemma}[theorem]{Lemma}
\newtheorem{corollary}[theorem]{Corollary}
\theoremstyle{definition} 
\newtheorem{definition}[theorem]{Definition}
\newtheorem{remark}[theorem]{Remark}
\newtheorem{example}[theorem]{Example}

\usepackage{chngcntr}

\numberwithin{equation}{section}

\allowdisplaybreaks

\newcommand{\Cesaro}{Ces\`{a}ro}

\newcommand{\Oh}{{\mathrm O}}
\newcommand{\oh}{{\mathrm o}}
\newcommand{\N}{\mathbb{N}}
\newcommand{\Z}{\mathbb{Z}}
\newcommand{\R}{\mathbb{R}}
\newcommand{\C}{\mathbb{C}}
\newcommand{\Q}{\mathbb{Q}}

\newcommand{\define}[1]{{\itshape #1}}

\renewcommand{\epsilon}{\varepsilon}
\renewcommand{\leq}{\leqslant}
\renewcommand{\geq}{\geqslant}
\renewcommand{\setminus}{\backslash}

\renewcommand{\P}{\mathbb{P}}
\renewcommand{\subset}{\subseteq}

\renewcommand{\d}{~\mathrm{d}}

\usepackage[normalem]{ulem}

\usepackage[makeroom]{cancel}

\newcommand{\E}{\operatorname{\mathbb{E}}}

\newcommand{\oo}{\infty}
\newcommand{\bin}{\text{bin}}
\newcommand{\floor}[1]{\lfloor #1 \rfloor}

\newcommand{\norm}[1]{\| #1 \|}

\makeatletter
\renewenvironment{abstract}
  {\small
   \begin{center}
     \bfseries Abstract
   \end{center}
   \quotation}
  {\endquotation}
\makeatother

\author{By~~{Vitaly~Bergelson}~~and~~ {Michael Reilly}
\\
and~~{Florian~K.~Richter}}
\date{}
\title{Weighted averages of arithmetic functions and applications to equidistribution and ergodic theory}

\begin{document}

\maketitle

\begin{abstract}
For a wide range of functions $W\colon\N\to\N$, we establish a general result for estimating weighted averages of the form
\[
\E^{W}_{n \le N} f(\vartheta(n))= \frac{1}{W(N)}\sum_{n=1}^N (W(n)-W(n-1))f(\vartheta(n)),
\]
where $f\colon \{1,\ldots,N\}\to\C$ is an arbitrary function, and $\vartheta(n)$ is any arithmetic function that adheres to a certain Gaussian distribution condition.(For instance, one may take $\vartheta(n)=\Omega(n)$, where $\Omega(n)$ counts the number of prime factors of $n$ with multiplicity, or $\vartheta(n)=s_q(p_n)$, where $s_q$ is the sum-of-digits function in base $q$ and $p_n$ denotes the $n$-th prime. Additional natural examples are discussed in the paper.)
Building on our main theorem, we show that if $h(n)$ is a function from a Hardy field with polynomial growth then $(h(\vartheta(n)))_{n\in\N}$ is uniformly distributed mod~$1$ if and only if one of the following (mutually exclusive) conditions is satisfied:
\begin{enumerate}[label=(\roman{enumi}),leftmargin=*]
\item $\lim_{x\to\infty} \frac{|h(x)-p(x)|}{x \log x}=\infty$ for all $p(x)\in \Q[x]$;
\item $\lim_{x\to\oo}\frac{|h(x)-p(x)|}{\sqrt{x}}=\oo$ for each $p(x)\in \Q[x]$ and there exists $q(x)\in \Q[x]$ such that $\lim_{x\to\oo}\frac{|h(x)-q(x)|}{x}<\oo$.
\end{enumerate}
This leads to several novel applications. For example, it follows that $(\Omega(n)^c)_{n\in\N}$ is uniformly distributed mod~$1$ if and only if $c$ is a non-integer greater than $\frac{1}{2}$.
\end{abstract}

\thispagestyle{empty}


\section{Introduction}\label{section:introduction}

Let $\N=\{1,2,3,\ldots\}$ be the set of positive integers.
Given a sequence $W \colon \mathbb{N} \to [0,\infty)$,
we define its \define{discrete derivative} $\Delta W$ by
\[
\Delta W(N) =
\begin{cases}
W(N) - W(N-1), & \text{if } N \ge 2, \\[6pt]
W(1), & \text{if } N = 1.
\end{cases}
\]
We also use second and third order discrete derivatives 
$\Delta^2 W = \Delta(\Delta W)$ and $\Delta^3 W = \Delta(\Delta^2 W)$.

\begin{definition}
Let $\mathscr{W}$ denote the class of functions $W\colon \N\to[0,\infty)$ that are eventually non-decreasing and satisfy $\lim_{N \to \infty} W(N) = \infty$.  
For $W \in \mathscr{W}$ and $f \colon \{1, \ldots, N\} \to \mathbb{C}$, we define the \emph{weighted discrete average}
\begin{equation}\label{eq:equivalence_of_derivatives_0}
\E^{W}_{n \le N} f(n)
= \frac{1}{W(N)} \sum_{n=1}^N \Delta W(n) \, f(n).
\end{equation}
When $W(N) = N$, this reduces to the standard \define{Cesàro average}, which we denote by
\[
\E_{n \le N} f(n) = \frac{1}{N} \sum_{n=1}^N f(n).
\]
\end{definition}

In addition to the averages introduced in \eqref{eq:equivalence_of_derivatives_0}, we consider two averaging schemes whose weights do not arise from a single underlying function: 
the \emph{binomial mean}
\begin{equation}
\label{def_bin}
\E^{\bin}_{n \le N} f(n)
= \frac{1}{2^N} \sum_{n=1}^N \binom{N}{n} f(n),
\end{equation}
and its ``parity-neutral'' variant
\begin{equation}
\label{def_2bin}
\E^{2\bin}_{n \le N} f(n)
= \E^{\bin}_{n \le N} \!\left( \frac{f(2n) + f(2n+1)}{2} \right)
= \frac{1}{2^{N+1}} \sum_{n=1}^{2N} \binom{N}{\big\lfloor \frac{n}{2} \big\rfloor} f(n).
\end{equation}

Let $f\colon \{1,\ldots,N\}\to\C$ be an arbitrary function. The purpose of this paper is to develop a new technique for estimating averages of the form
\begin{equation}
\label{eqn_general_theta_average}
\E^{W}_{n \le N} f(\vartheta(n))
\end{equation}
for an extensive class of arithmetic functions $\vartheta\colon\N\to\N$.

To describe the class of sequences $\vartheta(n)$ to which our method applies, 
consider the probability density function of the Gaussian normal distribution with mean $\mu$ and standard deviation~$\sigma$ given by
\[
g(x,\mu,\sigma) = \frac{1}{\sigma \sqrt{2\pi}} \, e^{-\frac{(x-\mu)^2}{2\sigma^2}},\qquad x\in\R.
\]
Let $\mathscr{L}\subset\mathscr{W}$ denote the class of all sequences $L\in\mathscr{W}$ for which $\Delta L(n)\in\{0,1\}$ for all but finitely many $n\in\N$; this requirement can be interpreted as a discrete analogue of sublinear growth.
The scope of our main result includes all arithmetic functions $\vartheta\colon \N\to\N$ for which there exists $L\in\mathscr{L}$ such that
\begin{equation}\label{gaussian_condition}
\sum_{k\in \N} \bigg|\frac{|\{1\leq n\leq N: \vartheta(n)=k\}|}{N}- g(k,L(N),\sqrt{L(N)})\bigg|= \oh_{N\to\infty}(1).
\end{equation}
This ``Gaussian condition'' roughly says that the distribution of $\vartheta(n)$ is close in variation distance to a normal distribution with mean $L(N)$ and standard deviation $\sqrt{L(N)}$.

There are many types of naturally occurring functions that satisfy this condition. For example, this property is satisfied by various summatory functions arising in number theory: 
\begin{itemize}
\item 
Let
\[\Omega(n)=\sum_{p^k} 1_{p\mid n}\]
denote the number of prime factors of $n$ counted with multiplicity.
Then $\Omega(n)$ satisfies condition \eqref{gaussian_condition} with $L(N) = \lfloor \log\log(N) \rfloor$. This follows from  
\cite[Theorem~II]{Erdos48a} (see~\cite[Lemma~3.4]{LOYD23} for details).
\item
Let
\[
\omega(n)=\sum_{p} 1_{p\mid n}\]
be the number of prime factors of $n$ counted without multiplicities.
Then $\omega(n)$ satisfies \eqref{gaussian_condition} with $L(N) = \lfloor \log\log(N) \rfloor$. This can be derived from \cite[Theorem~I]{Erdos48a}.
\item
The function $\Omega(q_n)$ satisfies \eqref{gaussian_condition} with $L(N) = \lfloor \log\log(N) \rfloor$, where $q_n$ is the $n$-th squarefree number. This follows from \cite[Theorem~III]{Erdos48a}.
\item 
Given an integer $q\geq 2$, let $s_q(n)$ denote the sum of digits function in base $q$, i.e.,
\[
s_q(n)=\sum_{j\geq 0}\epsilon_j(n),\quad\text{where}\quad n=\sum_{j\geq 0} \epsilon_j(n) q^j.
\]
Then $s_q(n)$ satisfies \eqref{gaussian_condition} with $L(N) = \lfloor \log(N) \rfloor$; this can be derived from \cite[Théorème~1.1]{FM05}.
\item 
The function $s_q(p_n)$ satisfies \eqref{gaussian_condition} with $L(N) = \lfloor \log(N) \rfloor$, where $p_n$ is the $n$-th prime number. In fact, a stronger assertion is proved in \cite[Theorem~1.1]{DMR09}.
\end{itemize}
While our main results are stated below for general $\vartheta(n)$ satisfying condition \eqref{gaussian_condition}, it is worth noting that they are already new when $\vartheta(n)$ is taken to be any one of the functions $\Omega(n), \omega(n), \Omega(q_n), s_q(n), s_q(p_n)$.

For technical reasons, in our main theorem we need to restrict our attention to weights in $\mathscr{W}$ that exhibit suitable regularity at infinity. More precisely, we require that the weight function $W$ has the property that
\begin{equation}
\label{eqn_smoothish_weights}
\lim_{N\to\infty} \frac{N\cdot \Delta^2 W(N)}{\Delta W(N)} \text{ exists in } \R\cup \{-\infty,\infty\}.
\end{equation}
This assumption is mild, as many natural weights satisfy \eqref{eqn_smoothish_weights}. 
For instance, if $W$ is a function that belongs to a Hardy field (defined on page~\pageref{hardy_page}) then \eqref{eqn_smoothish_weights} holds and hence $W \in \mathscr{W}^*$ if and only if $W \in \mathscr{W}$.

Henceforth, we use $\mathscr{W}^*$ to denote the subclass of $\mathscr{W}$ that satisfy \eqref{eqn_smoothish_weights}.
The following is our main theorem.

\begin{theorem}
\label{thm_main}
Let $W\in\mathscr{W}^*$ and assume $\vartheta\colon \N\to\N$ satisfies \eqref{gaussian_condition} for some $L\in\mathscr{L}$.
\begin{enumerate}
\item\label{itm_main_1}
Uniformly over all $f\colon \N\to\C$ with $\|f\|_\infty\leq 1$,
\begin{equation}
\label{eq_main_cesaro_scale}
\E_{n\leq N}f(\vartheta(n)) =\E_{n\leq L(N)}^{2\bin}f(n)+o_{N\to\oo}(1).
\end{equation}
\item
If $\lim_{N\to\oo}\frac{\log(W(N))}{\log(N)}=0$ then uniformly over all $f\colon \N\to\C$ with $\|f\|_\infty\leq 1$,
\begin{equation}
\label{eq_main_log_scale}
\E_{n\leq N}^W f(\vartheta(n)) =\E_{n\leq N}^W\E_{k\leq L(n)}^{2\bin}f(k)+o_{N\to\oo}(1).
\end{equation}
\item\label{itm_main_3}
If $\lim_{N\to\oo}\frac{\log(W(N))}{N}=0$ and $\lim_{N\to\oo}\frac{\log(W\circ L)(N)}{\log(N)}=0$ then uniformly over all $f\colon \N\to\C$ with $\|f\|_\infty\leq 1$,
\begin{equation}
\label{eq_main_loglog_scale}
\E_{n\leq N}^{W\circ L} f(\vartheta(n))  = \E^{W}_{n\leq L(N)}f(n)+o_{N\to\oo}(1).
\end{equation}
\end{enumerate}
\end{theorem}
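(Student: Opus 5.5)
Part (i) reduces to a comparison of two discrete distributions in total variation. Write $P_N(k)=|\{n\le N:\vartheta(n)=k\}|/N$. Then $\E_{n\le N}f(\vartheta(n))=\sum_k P_N(k)f(k)$. By \eqref{gaussian_condition} this equals $\sum_k g(k,L(N),\sqrt{L(N)})f(k)+o(1)$, uniformly in $\|f\|_\infty\le1$. On the other side, \eqref{def_2bin} shows that $\E^{2\bin}_{n\le L}f(n)$ is integration of $f$ against the measure with mass $2^{-L-1}\binom{L}{\lfloor k/2\rfloor}$ at $k$. This is the law of $2X+\xi$ with $X\sim\mathrm{Bin}(L,1/2)$ and $\xi$ uniform on $\{0,1\}$, so it has mean $\approx L$ and variance $L+1/4$. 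A local central limit theorem (de Moivre--Laplace with Stirling) gives $2^{-L}\binom{L}{j}=\frac{2}{\sqrt{2\pi L}}e^{-(2j-L)^2/(2L)}(1+o(1))$, uniformly for $|2j-L|\le L^{1/2+\delta}$. Hence $2^{-L-1}\binom{L}{\lfloor k/2\rfloor}=g(k,L,\sqrt L)(1+o(1))$ uniformly in the central window $|k-L|\le L^{1/2+\delta}$; the shift of at most $1$ in $k$ is harmless since $\sqrt L\to\infty$. Both distributions give $o(1)$ mass outside the window (Chebyshev for the binomial, a Gaussian tail bound for $g$), so their $\ell^1$ distance is $o(1)$, which proves \eqref{eq_main_cesaro_scale}.

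For part (ii), set $a_n=f(\vartheta(n))$ and $b_n=\E^{2\bin}_{k\le L(n)}f(k)$. Part (i) says $\frac1M\sum_{n\le M}(a_n-b_n)=\epsilon_M\to0$ uniformly in $f$. Writing $S_M=\sum_{n\le M}(a_n-b_n)=M\epsilon_M$ and applying Abel summation gives
\[
\E^W_{n\le N}(a_n-b_n)=\frac{1}{W(N)}\Big(\Delta W(N)S_N-\sum_{M<N}\Delta^2W(M+1)\,S_M\Big).
\]
I would use \eqref{eqn_smoothish_weights} to control $\Delta^2W(M)\approx c\,\Delta W(M)/M$ with $c\in[-\infty,\infty]$. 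The hypothesis $\log W(N)/\log N\to0$ forces the limit to be $c=-1$: if $c>-1$ then $W$ grows polynomially, and $c=-\infty$ is impossible for an eventually increasing $W$. So $\Delta W$ is roughly decreasing and $M\Delta W(M)=o(W(M))$, i.e.\ $W$ is slowly varying. Then $\sum_M|\Delta^2W(M+1)|M|\epsilon_M|\lesssim\sum_M\Delta W(M)|\epsilon_M|=o(W(N))$, and the boundary term $\Delta W(N)N\epsilon_N=o(W(N))$ as well. This would follow the standard fact that Cesàro convergence implies $W$-convergence for weights no faster than $N$. The main obstacle is extracting these monotonicity and size relations rigorously from \eqref{eqn_smoothish_weights}: one must treat the borderline $c=-1$ case and handle the finitely many indices where $W$ fails to be monotone.

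For part (iii), I would combine (ii) applied to the weight $W\circ L$ with an averaging identity. Since $W\circ L\in\mathscr W$, I expect $W\circ L\in\mathscr W^*$ or can reduce to it. Its hypothesis $\log(W\circ L)/\log N\to0$ is assumed, so (ii) gives $\E^{W\circ L}_{n\le N}f(\vartheta(n))=\E^{W\circ L}_{n\le N}\E^{2\bin}_{k\le L(n)}f(k)+o(1)$. Because $\Delta L\in\{0,1\}$ eventually, $\Delta(W\circ L)(n)$ vanishes unless $L$ jumps, in which case it equals $\Delta W(L(n))$. The right side therefore collapses to $\E^W_{m\le L(N)}\E^{2\bin}_{k\le m}f(k)+o(1)$. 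It remains to show $\E^W_{m\le M}\E^{2\bin}_{k\le m}f(k)=\E^W_{m\le M}f(m)+o(1)$ when $\log W(M)/M\to0$ (subexponential growth). Swapping sums, the left side is $\sum_k f(k)\,\kappa_M(k)$, where $\kappa_M(k)=\frac1{W(M)}\sum_m\Delta W(m)\,2^{-m-1}\binom{m}{\lfloor k/2\rfloor}$. The claim reduces to $\sum_k|\kappa_M(k)-\Delta W(k)/W(M)|=o(1)$. The kernel $m\mapsto 2^{-m-1}\binom{m}{\lfloor k/2\rfloor}$ is concentrated near $m\approx k$ with width $\sqrt k$ and has total mass close to $1$. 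Subexponential growth together with \eqref{eqn_smoothish_weights} makes $\Delta W$ nearly constant on windows of size $\sqrt k$ (I would need $\Delta W(m)/\Delta W(k)\to1$ for $|m-k|\le k^{1/2+\delta}$), and a boundary layer near $M$ contributes $o(1)$. This local-constancy step is the second delicate point; I would verify it separately in the cases $c$ finite and $c=+\infty$ from \eqref{eqn_smoothish_weights}.
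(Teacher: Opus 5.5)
Your part (i) is the paper's argument: a total-variation comparison of the empirical law of $\vartheta$, the Gaussian $g(\cdot,L(N),\sqrt{L(N)})$, and the $\E^{2\bin}$ weights via the local limit theorem. There are real gaps in (ii) and (iii).

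\textbf{Part (ii).} Part (i) does not say $\frac1M\sum_{n\le M}(a_n-b_n)\to0$. It says $\frac1M\sum_{n\le M}a_n-b_M\to0$. The discrepancy $\frac1M\sum_{n\le M}b_n-b_M$ is never addressed. Making it $o(1)$ needs $L(M)-L(cM)=o(\sqrt{L(M)})$ for fixed $c\in(0,1)$. This fails for general $L\in\mathscr{L}$ (take $L(n)=n$), so it would have to be extracted from \eqref{gaussian_condition}. The clean repair keeps all your estimates but sums by parts on $a_n$ alone. Write $A_M=\sum_{n\le M}a_n=M(b_M+\epsilon_M)$. Then $-M\Delta^2W(M+1)\sim\Delta W(M)$ (your $c=-1$) and $N\Delta W(N)=o(W(N))$ turn $\frac{1}{W(N)}\bigl(\Delta W(N)A_N-\sum_{M<N}\Delta^2W(M+1)A_M\bigr)$ into $\E^W_{M\le N}b_M+o(1)$. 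This is exactly the paper's proof: \cref{lem:boshernitzan}, i.e.\ $\E^W_{n\le N}x_n=\E^W_{n\le N}\E_{m\le n}x_m+o(1)$ for bounded $x$, proved by this summation by parts, combined with (i) and \cref{lem:regularity}.

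\textbf{Part (iii), first step.} Applying (ii) with the weight $W\circ L$ fails, and no reduction can rescue it. As you observe, $\Delta(W\circ L)(n)=0$ unless $L$ jumps at $n$. Hence the ratio in \eqref{eqn_smoothish_weights} is undefined and $W\circ L\notin\mathscr{W}^*$. More seriously, $\E^{W\circ L}_{n\le N}f(\vartheta(n))$ only sees $\vartheta$ on the jump set of $L$. For $\vartheta=\Omega$, $L(N)=\lfloor\log\log N\rfloor$, $W(N)=N$, it is the plain average of $f(\Omega(n))$ over the roughly $\log\log N$ points $n=\lceil e^{e^j}\rceil$. Redefining $\vartheta$ to be $1$ on this density-zero set preserves \eqref{gaussian_condition}. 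Yet for $f=1_{\{1\}}$ the left side of \eqref{eq_main_loglog_scale} then tends to $1$ and the right side to $0$. The paper's proof of \cref{thm_1.2.3} makes the same move (it invokes \cref{cor_formula1.9} for the weight $W\circ L$), so the defect lies in the literal formulation of (iii). What is provable, and what \eqref{eqn_double-log-averages_for_Omega} intends, uses a weight $V\in\mathscr{W}^*$ with $\log V(N)/\log N\to0$ and $\sum_{n\colon L(n)=m}\Delta V(n)\sim\Delta W(m)$. For example, take $V(N)=\log\log N$ rather than $\lfloor\log\log N\rfloor$. For such $V$, (ii) applies and your collapse to $\E^W_{m\le L(N)}\E^{2\bin}_{k\le m}f(k)$ (the analogue of \cref{cor:change_of_variables_no_s_hat}) goes through.

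\textbf{Part (iii), final step.} Subexponential growth does not make $\Delta W$ locally constant at scale $\sqrt k$. Take $W(N)=e^{\sqrt N}$, which meets every hypothesis of (iii) (e.g.\ with $L(N)=\lfloor\log\log N\rfloor$). Then $\Delta W(k+\sqrt k)/\Delta W(k)\to e^{1/2}$ and $\sqrt M\,\Delta W(M)/W(M)\to\frac12$. For $f=1_{(M,\infty)}$ one gets $\E^W_{m\le M}\E^{2\bin}_{k\le m}f(k)\to\int_0^\infty\frac12e^{-u/2}\,\P(Z>u)\,du>0$, with $Z$ standard normal. On the other hand, $\E^W_{m\le M}f(m)=0$. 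So your reduction fails there, and (iii) needs more than $\log W(N)/N\to0$.

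Your kernel argument can be made to work when $\sqrt N\,\Delta^2W(N)/\Delta W(N)\to0$. In that case $\Delta W$ is asymptotically constant on windows of length $C\sqrt k$, and the boundary layer near $M$ carries $o(1)$ mass. This covers every $W$ of polynomial growth, including $W(N)=N$. It is therefore broader than the paper's route for this step (\cref{lem:boshernitzan}, then \cref{cor:C_of_2BC}, then \cref{lem:boshernitzan} again). That route needs $\log W(N)/\log N\to0$, which is not among the hypotheses of (iii). For $W(N)=N$ the step is just \cref{cor:C_of_2BC}.
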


Let us point out some interesting consequences of \cref{thm_main} to illustrate its usefulness.
Applied to $\vartheta(n) = \Omega(n)$ and $W(N) = N$, part~\ref{itm_main_1} gives
that the \Cesaro{} average of $f(\Omega(n))$ is asymptotically equal to the parity-neutral binomial mean of $f(n)$, i.e.,
\begin{equation}
\label{eqn_Cesaro_for_Omega}
\E_{n\leq N}f(\Omega(n)) =\E_{n\leq \lfloor\log\log N\rfloor}^{2\bin}f(n)+o_{N\to\oo}(1).
\end{equation}
Moreover, part~\ref{itm_main_3} shows that the
double-logarithmic average of $f(\Omega(n))$ corresponds to the \Cesaro{} average of $f(n)$,
\begin{equation}
\label{eqn_double-log-averages_for_Omega}
\E_{n\leq N}^{\log\log} f(\Omega(n))  = \E_{n\leq \lfloor\log\log N\rfloor}f(n)+o_{N\to\oo}(1).
\end{equation}
Analogous formulas hold when $\Omega(n)$ is replaced by either $\omega(n)$ or $\Omega(q_n)$.

In a similar vein, when $\vartheta(n)=s_q(n)$, then part~\ref{itm_main_1} of \cref{thm_main} yields
\begin{equation}
\label{eqn_Cesaro_for_sumofdigits}
\E_{n\leq N}f(s_q(n)) =\E_{n\leq \lfloor\log N\rfloor}^{2\bin}f(n)+o_{N\to\oo}(1),
\end{equation}
and part~\ref{itm_main_3} gives
\begin{equation}
\label{eqn_log-averages_for_sumofdigits}
\E_{n\leq N}^{\log} f(s_q(n))  = \E_{n\leq \lfloor\log N\rfloor}f(n)+o_{N\to\oo}(1).
\end{equation}
The same formulas apply when $s_q(n)$ is replaced by $s_q(p_n)$.

As an application of \cref{thm_main}, we obtain new results on the equidistribution of sequences modulo $1$ and new results in ergodic theory on the convergence of ergodic averages along arithmetic functions. These are described in the following two subsections.

\subsection*{Applications to equidistribution}

We say that two functions $f\colon [a,\infty)\to\R$ and $g\colon [b,\infty)\to\R$ are \define{eventually identical} if there exists $c\geq \max\{a,b\}$ such that $f(x)=g(x)$ for all $x\in [c,\infty)$. This yields a natural equivalence relation on the set of all real-valued continuous functions which are defined for all sufficiently large real arguments.
A \define{germ at $\infty$} of real-valued functions is an equivalence class under this relation.
Note that the operations of pointwise addition and pointwise multiplication of real-valued continuous functions extend in a natural way to germs at $\infty$. Under these operations, the set of all germs at $\infty$ forms a commutative ring.

A \define{Hardy field} $\mathcal{H}$ is any subfield of this ring that is closed under differentiation, in the sense that if the germ at $\infty$ of a differentiable function belongs to $\mathcal{H}$ then so does the germ at $\infty$ of its derivative.\label{hardy_page}

Typical examples of Hardy fields include the field of rational functions, and the field of logarithmico-exponential functions (i.e., the smallest field closed under compositions and containing all polynomials, $\log(x)$, and $\exp(x)$).
By abuse of language, we say a function $f\colon [a,\infty)\to\R$ belongs to a Hardy field if its germ at $\infty$ belongs to a Hardy field. Examples include functions such as $x^c$ for $c\in\R$, $x\log(x)$, or $\exp(\sqrt{\log x})$).
It is a classical fact that functions belonging to a Hardy field are eventually monotone. In particular, this means that highly oscillatory functions such as $\sin(x)$ do not belong to any Hardy field.
For more information on Hardy fields, see \cite{Boshernitzan81,Boshernitzan82,Boshernitzan94} or \cite[Section~2]{Frantzikinakis09}.

We are now ready to formulate one of the main applications of \cref{thm_main}.

\begin{theorem}
\label{thm_ud_of_Hardy_functions_along_Omega_Cesaro}
Let $h$ be a Hardy field function with polynomial growth (i.e., there exist $c,d\geq 1$ such that $|h(x)|\leq x^d$ for all $x\in [c,\infty)$).
Assume $\vartheta\colon \N\to\N$ satisfies \eqref{gaussian_condition} for some $L\in\mathscr{L}$.
The following are equivalent:
\begin{enumerate}[label=(\roman{enumi}),ref=(\roman{enumi}),leftmargin=*]
\item
The sequence $(h(\vartheta(n)))_{n\in \N}$ is uniformly distributed mod~$1$.
\item 
One of the following two (mutually exclusive) conditions is satisfied:
\begin{enumerate}[label=(\alph{enumii}),ref=(\alph{enumii}),leftmargin=*]
\item
\label{itm_ud_of_Hardy_functions_along_Omega_Cesaro_ii_a}
$\lim_{x\to\infty} \frac{|h(x)-p(x)|}{x \log x}=\infty$ for all $p(x)\in \Q[x]$;
\item
\label{itm_ud_of_Hardy_functions_along_Omega_Cesaro_ii_b}
$\lim_{x\to\oo}\frac{|h(x)-p(x)|}{\sqrt{x}}=\oo$ for each $p(x)\in \Q[x]$ and there exists $q(x)\in \Q[x]$ such that $\lim_{x\to\oo}\frac{|h(x)-q(x)|}{x}<\oo$.
\end{enumerate}
\end{enumerate}
\end{theorem}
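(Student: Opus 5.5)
By Weyl's criterion, it suffices to understand, for each nonzero $m\in\Z$, the averages $\E_{n\le N} e(m h(\vartheta(n)))$. Part~\ref{itm_main_1} of \cref{thm_main} applied to $f(k)=e(mh(k))$ reduces this to
\[
\E^{2\bin}_{k\le L(N)} e(mh(k)).
\]
Since $L\in\mathscr{L}$ takes every sufficiently large integer value, the equivalence to be proved becomes a statement purely about $h$: (i) holds if and only if $\E^{2\bin}_{k\le M} e(mh(k))\to0$ as $M\to\infty$ for every $m\neq0$. So the proof is a characterization of Hardy functions that are ``uniformly distributed with respect to the parity-neutral binomial mean.''

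The binomial weights $\binom{M}{\lfloor k/2\rfloor}2^{-M-1}$ form essentially a Gaussian centred at $M$ with standard deviation of order $\sqrt{M}$. The mean therefore only sees a window $|k-M|\lesssim C\sqrt M$, with a smooth weight on it. The "if" direction splits by condition.

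\emph{Case (a).} Here $h$ grows faster than $x\log x$ modulo rational polynomials. Write $h=p+h_1$ with $p\in\Q[x]$ of degree $d$ and $h_1$ not polynomial-like. I would use van der Corput differencing (or the Fejér/Boshernitzan-type theorem for Hardy functions) on short intervals $[M,M+H]$ with $H\approx\sqrt M$. After differencing $d$ times, the phase becomes a Hardy function whose derivative, by the $x\log x$ hypothesis, is large compared with $1/\sqrt M$ and still small. The rational part of $p$ is killed by restricting to residue classes modulo the common denominator $Q$. The factor $\log x$ is exactly what ensures that the relevant derivative of $h_1$ beats the window scale. Summing the short-interval estimates against the Gaussian weight, via partial summation using the smoothness of the binomial weights, gives decay.

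\emph{Case (b).} Here $h=q+h_1$ with $q\in\Q[x]$ and $h_1$ at most linear growth, with $|h_1-r|/\sqrt x\to\infty$ for rational linear corrections. On the window, $h_1(k)\approx h_1(M)+h_1'(M)(k-M)$, since $h_1''\cdot M\to0$ for sublinear-derivative Hardy functions. The $q$-part is periodic mod~$1$ with some period $Q$. Splitting $k$ by residue mod $Q$, one is left with sums of the form
\[
\sum_k \binom{M}{\lfloor k/2\rfloor}2^{-M-1}\,e\bigl(m h_1'(M)(k-M)\bigr),
\]
which equal the characteristic function of a discretized Gaussian. This is of size roughly $\exp\bigl(-c\,M\,\|m h_1'(M)\,Q\|^2\bigr)$, where $\|\cdot\|$ is the distance to the nearest integer. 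The parity-neutral averaging is what removes the spurious frequency $1/2$, and hence the need for "$2\bin$" rather than "$\bin$". Condition (b) should translate into $\sqrt M\,\|\cdot\|\to\infty$ unless the slope is rational, and the rational case is excluded by the hypothesis.

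For the "only if" direction, suppose neither (a) nor (b) holds. Then either $h$ is within $O(\sqrt x)$ of a rational polynomial, or $h-p$ lies strictly between order $x$ and order $x\log x$ for a suitable $p$. In the first case the window sees $h$ as essentially constant mod~$1$ after fixing residues, so the Gaussian characteristic function stays bounded away from $0$ for a suitable $m$. In the second case $h_1'$ grows like a slowly varying function tending to infinity, of size at most $\log x$. It therefore passes slowly through integers: $h_1'(M)$ stays within $O(1/\sqrt M)$ of an integer for long stretches of $M$. The same Gaussian computation then gives non-vanishing along a subsequence, and since $L$ hits all large integers, Weyl's criterion fails. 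The main obstacle is this precise threshold analysis. One must show that the second-order term $h_1''(M)(k-M)^2\sim h_1''M$ is negligible exactly when $h$ grows at most like $x\log x$, and prove rigorous two-sided Gaussian Fourier estimates. I would first handle everything for $h$ in the log-exponential class and then pass to general Hardy fields via Boshernitzan's comparison results.
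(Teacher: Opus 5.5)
You take a different route from the paper: Weyl's criterion, part~\ref{itm_main_1} of \cref{thm_main}, then a direct Gaussian-window analysis of $\E^{2\bin}_{k\le M}$ on $|k-M|\le C\sqrt M$. The paper instead goes through $e^{\sqrt x}$-weighted means, \cref{thm:mikey_thm} and \cref{thm:Boos} for (a) and (b), and Hardy's Tauberian theorem for the $\Oh(\sqrt x)$ case. Modulo routine details, your route handles (b), the case $h-p=\Oh(\sqrt x)$, and the case $x\ll h-p=\oh(x\log x)$.

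\textbf{First gap: case (a).} You put the $x\log x$ threshold in the wrong Taylor term. Write $h_1=h-p$ and $t=(k-M)/\sqrt M$. On the window, $h_1(k)\approx h_1(M)+h_1'(M)\sqrt M\,t+\tfrac12 Mh_1''(M)\,t^2$. For $h_1$ between $x$ and $x^2$, the trichotomy $Mh_1''(M)\to\infty$, $\to A\in(0,\infty)$, $\to0$ corresponds exactly to (a), to $h_1\asymp x\log x$, and to $h_1=\oh(x\log x)$. So the threshold is a second-derivative condition: $|I|^2h_1''\to\infty$ on windows of length $|I|\asymp\sqrt M$.

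Your mechanism for (a) is a first-derivative one. You claim that after a fixed number of differencings the phase has derivative $\gg M^{-1/2}$ and $\oh(1)$, and then apply a Fej\'er/Kusmin--Landau bound. This fails for $h(x)=x^{5/4}$ or $h(x)=x\log^2x$, both of which satisfy (a). There $h'\to\infty$, while for each fixed $j$ the phase $h(n+j)-h(n)$ has derivative $\asymp jh''(M)=\oh(M^{-1/2})$. So that phase is essentially constant on every window, every correlation has modulus tending to $1$, and fixed-shift van der Corput gives nothing. The same obstruction reappears at higher degrees, e.g.\ for $x^{9/4}$.

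What is needed is an estimate that uses $h''$ with shifts up to the window length. One such is the second-derivative test: $\bigl|\sum_{n\in I}e^{2\pi i h(n)}\bigr|\ll\alpha|I|\lambda^{1/2}+\lambda^{-1/2}$ whenever $\lambda\le h''\le\alpha\lambda$ on $I$. For $|I|\asymp\sqrt M$ this is $\oh(|I|)$ exactly when $Mh''(M)\to\infty$. Combined with a third-derivative test for faster growth, this is how the paper proves \cref{thm:s_ud}.

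\textbf{Second gap: the converse at the $x\log x$ boundary.} You assert that $\tfrac12h_1''(M)(k-M)^2$ is negligible ``exactly when $h$ grows at most like $x\log x$''. You then treat $h_1\asymp x\log x$ together with $h_1=\oh(x\log x)$ by linearization. At the boundary this is false. If $h_1\sim Ax\log x$, then $Mh_1''(M)\to A>0$, so the quadratic term contributes a factor $e^{\pi i mAt^2}$ of order one on the window. Your linearized Gaussian computation therefore does not apply.

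The missing step is to keep this term. Take $m$ a common denominator of $p$. Choose $M$ with $\|mh_1'(M)\|_{\Z}\le mh_1''(M)$, as in \cref{lem_dioph_approx_of_f}, so the linear term is negligible on the window. Then evaluate the resulting Fresnel-type integral along those $M$:
\[
\Bigl|\E^{2\bin}_{k\le M}e^{2\pi i m h(k)}\Bigr|\longrightarrow\Bigl|\frac{1}{\sqrt{2\pi}}\int_{-\infty}^{\infty}e^{-t^2/2+\pi i mAt^2}\,dt\Bigr|=(1+4\pi^2m^2A^2)^{-1/4}\neq0 .
\]
This is \cref{lem_binomial_averages_of_nlogn}. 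Heuristically, the decay of this factor as $A\to\infty$ is where the $\log x$ in (a) really comes from.

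\textbf{Two smaller points.}
\begin{itemize}
\item When $h_1\asymp\sqrt x$, $h$ is not ``essentially constant'' on the window; it moves by $\asymp C$. What keeps the mean away from $0$ is that $\sqrt M\,h_1'(M)$ stays bounded, so the Gaussian factor $\approx e^{-2\pi^2m^2Mh_1'(M)^2}$ is bounded below.
\item In (b), a rational limiting slope $\alpha$ of $h_1$ is not excluded by the hypothesis. It is handled by the $\sqrt x$ condition, which gives $\sqrt M\,|h_1'(M)-\alpha|\to\infty$.
\end{itemize}
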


By L'Hospital's rule, condition \ref{itm_ud_of_Hardy_functions_along_Omega_Cesaro_ii_a} in Theorem~\ref{thm_ud_of_Hardy_functions_along_Omega_Cesaro} is equivalent to the assertion
\[
\lim_{x\to\infty} \frac{|h'(x)-p(x)|}{\log x}=\infty,\quad \forall p(x)\in \Q[x],
\]
where $h'$ denotes the derivative of $h$. In light of Boshernitzan's theorem \cite[Theorem 1.3]{Boshernitzan94}, this is in turn equivalent to $h'(n)$ being uniformly distributed mod~1.
This leads us to the following corollary.

\begin{corollary}
\label{cor_ud_of_Hardy_functions_along_Omega_Cesaro_2}
Let $h$ be a function from a Hardy field with polynomial growth. If $h'(n)$ is uniformly distributed mod~$1$ then $h(\Omega(n))$ is uniformly distributed mod~$1$.
The same applies to the sequences $h(\omega(n))$, $h(\Omega(q_n))$, $h(s_q(n))$, and $h(s_q(p_n))$.
\end{corollary}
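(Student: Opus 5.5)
The plan is to deduce the corollary directly from \cref{thm_ud_of_Hardy_functions_along_Omega_Cesaro}, by showing that the hypothesis forces condition~\ref{itm_ud_of_Hardy_functions_along_Omega_Cesaro_ii_a}.

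First, I will check that each of the sequences $\vartheta(n)=\Omega(n),\omega(n),\Omega(q_n),s_q(n),s_q(p_n)$ satisfies \eqref{gaussian_condition} for some $L\in\mathscr{L}$. This was recorded in the introduction: $L(N)=\lfloor\log\log N\rfloor$ for the first three and $L(N)=\lfloor\log N\rfloor$ for the last two. These $L$ are eventually non-decreasing, tend to infinity, and have increments in $\{0,1\}$ for large $N$, since $\log\log$ and $\log$ have derivatives tending to $0$. So the theorem applies to each of them.

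Second, suppose $h'(n)$ is uniformly distributed mod~$1$. Here $h'$ lies in the same Hardy field as $h$. It has polynomial growth because, for Hardy field functions, $h'(x)=\oh(h(x)/x)\cdot x$ up to constants, i.e.\ $|h'(x)|\ll |h(x)|/x + 1$. By Boshernitzan's theorem \cite[Theorem 1.3]{Boshernitzan94}, uniform distribution of $h'(n)$ is equivalent to
\[
\lim_{x\to\infty}\frac{|h'(x)-p(x)|}{\log x}=\infty\quad\text{for all } p\in\Q[x].
\]

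Third, I transfer this to $h$ via L'Hospital's rule. Fix $p\in\Q[x]$ and let $P\in\Q[x]$ be an antiderivative of $p$. Then $h-P$ belongs to a Hardy field (adjoining polynomials preserves the Hardy field property), so it is eventually monotone and $\lim |h(x)-P(x)|/(x\log x)$ exists in $[0,\infty]$. If $|h-P|$ is unbounded, L'Hospital's rule gives
\[
\lim\frac{|h-P|}{x\log x}=\lim\frac{|h'-p|}{\log x+1}=\infty.
\]
If $|h-P|$ were bounded, then $h'-p$ would tend to $0$, contradicting the display. Every $P\in\Q[x]$ arises this way (differentiate $P$ to get $p$), so condition~\ref{itm_ud_of_Hardy_functions_along_Omega_Cesaro_ii_a} holds, and \cref{thm_ud_of_Hardy_functions_along_Omega_Cesaro} yields that $h(\vartheta(n))$ is uniformly distributed mod~$1$.

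The only delicate point is justifying L'Hospital and the existence of limits. This rests on the standard fact that Hardy fields can be enlarged to contain $\Q[x]$ and $\log x$, so that all quotients involved are eventually monotone and their limits exist.
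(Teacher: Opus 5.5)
Your proposal is correct and is essentially the paper's own argument. The paper likewise obtains the corollary from condition~\ref{itm_ud_of_Hardy_functions_along_Omega_Cesaro_ii_a} of \cref{thm_ud_of_Hardy_functions_along_Omega_Cesaro}: it uses L'Hospital's rule to rewrite that condition as $\lim_{x\to\infty}|h'(x)-p(x)|/\log x=\infty$ for all $p\in\Q[x]$, and Boshernitzan's theorem \cite[Theorem 1.3]{Boshernitzan94} to identify this with uniform distribution of $h'(n)$ mod~$1$. You only spell out details the paper leaves implicit, namely the one needed direction (including the case where $h-P$ is bounded) and the polynomial growth of $h'$.
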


The reverse implication in \cref{cor_ud_of_Hardy_functions_along_Omega_Cesaro_2} does not hold. For example, if $h(x)=x^{\frac{2}{3}}$ then $h'(n)=\frac{2}{3 \sqrt[3]{n}}$ is not uniformly distributed mod~$1$, yet $h(\Omega(n))$ is uniformly distributed mod~$1$ due to condition \ref{itm_ud_of_Hardy_functions_along_Omega_Cesaro_ii_b} in Theorem~\ref{thm_ud_of_Hardy_functions_along_Omega_Cesaro}.

Here is another corollary that follows immediately from Theorem \ref{thm_ud_of_Hardy_functions_along_Omega_Cesaro}.

\begin{corollary}
\label{cor_ud_of_Hardy_functions_along_Omega_Cesaro_1}
Let $c>0$. The sequence $\Omega(n)^c$ is uniformly distributed mod~$1$ if and only if $c\in \big(\frac{1}{2},\infty\big)\setminus\N$. The same applies to the sequences $\omega(n)^c$, $\Omega(q_n)^c$, $s_q(n)^c$, and $s_q(p_n)^c$.
\end{corollary}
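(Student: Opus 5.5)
The corollary should follow by applying \cref{thm_ud_of_Hardy_functions_along_Omega_Cesaro} to $h(x)=x^c$. This function lies in the Hardy field of logarithmico-exponential functions and has polynomial growth. All five sequences $\Omega(n)$, $\omega(n)$, $\Omega(q_n)$, $s_q(n)$, $s_q(p_n)$ satisfy \eqref{gaussian_condition} for a suitable $L\in\mathscr{L}$, as recorded in the introduction. So the task reduces to deciding, for each $c>0$, whether condition (a) or (b) of \cref{thm_ud_of_Hardy_functions_along_Omega_Cesaro} holds. I will split into four cases according to the value of $c$.

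\emph{Case $c>1$, $c\notin\N$.} I will show condition (a) holds. For any $p\in\Q[x]$ of degree $d$, the difference $x^c-p(x)$ is asymptotic to a nonzero constant times $x^{\max(c,d)}$, because $c\neq d$. If $d<c$, the order is $x^c$ with $c>1$. If $d>c$, the order is $x^d$ with $d\ge 2$. In both cases the difference grows faster than $x\log x$.

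\emph{Case $\tfrac12<c<1$.} Here I will show condition (b) holds. Take $q=0$; then $|x^c|/x\to 0<\infty$. For any $p\in\Q[x]$, the difference $x^c-p(x)$ is of order $x^c$ if $p$ is constant, and of order $x^{\deg p}\ge x$ otherwise. Either way it dominates $\sqrt{x}$, since $c>\tfrac12$.

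\emph{Case $c\in\N$.} Then $h=x^c$ itself lies in $\Q[x]$. Taking $p=h$ gives $|h-p|=0$, which violates the first clause of (b), and (a) also fails. Alternatively one sees directly that $\Omega(n)^c$ is an integer, so the sequence cannot be uniformly distributed.

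\emph{Case $0<c\le\tfrac12$.} Take $p=0$. Then $|x^c|/\sqrt x$ stays bounded, so the first clause of (b) fails. Condition (a) also fails, since $x^c=o(x\log x)$.

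Combining the four cases gives the stated equivalence. There is no real obstacle here. The only point needing care is the boundary $c=\tfrac12$: $x^{1/2}/\sqrt x=1$ does not tend to infinity, so $c=\tfrac12$ is correctly excluded.
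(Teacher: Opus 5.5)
Your proposal is correct and follows the paper's route: the paper simply asserts that the corollary follows immediately from \cref{thm_ud_of_Hardy_functions_along_Omega_Cesaro} applied to $h(x)=x^c$. Your four-case check of conditions (a) and (b), including the boundary $c=\tfrac12$ and integer $c$, is exactly the verification the paper leaves implicit.
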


The surprising conclusion that we can draw from
\cref{thm_ud_of_Hardy_functions_along_Omega_Cesaro} is that if $\vartheta(n)$ satisfies \eqref{gaussian_condition} for some $L\in\mathscr{L}$ then there are many functions $h$ from a Hardy field such that $(h(n))_{n\in\N}$ is uniformly distributed mod~$1$, but $(h(\vartheta(n)))_{n\in\N}$ is not. However, it follows from part~\ref{itm_main_3} of \cref{thm_main} that if one switches from \Cesaro{} averages to averages weighted by $L(N)$ then uniform distribution mod~$1$ along $n$ and along $\vartheta(n)$ become equivalent. 

\begin{theorem}
\label{thm_ud_of_Hardy_functions_along_Omega_loglog}
Let $h$ be a function from a Hardy field with polynomial growth. Then the following are equivalent:
\begin{enumerate}[label=(\roman{enumi}),ref=(\roman{enumi}),leftmargin=*]
\item
\label{ud_of_Hardy_functions_along_Omega_loglog_i}
$\lim_{x\to\infty} \frac{|h(x)-p(x)|}{\log x}=\infty$ for all $p(x)\in \Q[x]$;
\item
\label{ud_of_Hardy_functions_along_Omega_loglog_ii}
$h(n)$ is uniformly distributed mod~$1$;
\item
\label{ud_of_Hardy_functions_along_Omega_loglog_iii}
$h(p_n)$ is uniformly distributed mod~$1$, where $p_n$ denotes the $n$-th prime;
\item
\label{ud_of_Hardy_functions_along_Omega_loglog_iv}
$h(\Omega(n))$ is uniformly distributed mod~$1$ with respect to double-logarithmic averages.
The same applies to the sequences $h(\omega(n))$ and $h(\Omega(q_n))$.
\item
\label{ud_of_Hardy_functions_along_sq_log_v}
$h(s_q(n))$ is uniformly distributed mod~$1$ with respect to logarithmic averages.
The same applies to the sequence $h(s_q(p_n))$.
\end{enumerate}
\end{theorem}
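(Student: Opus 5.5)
The equivalence \ref{ud_of_Hardy_functions_along_Omega_loglog_i}$\Leftrightarrow$\ref{ud_of_Hardy_functions_along_Omega_loglog_ii} is Boshernitzan's theorem \cite[Theorem~1.3]{Boshernitzan94}. The equivalence \ref{ud_of_Hardy_functions_along_Omega_loglog_i}$\Leftrightarrow$\ref{ud_of_Hardy_functions_along_Omega_loglog_iii} is the known analogue of that theorem along primes, and I would simply cite it. The new content is that \ref{ud_of_Hardy_functions_along_Omega_loglog_iv} and \ref{ud_of_Hardy_functions_along_sq_log_v} are each equivalent to \ref{ud_of_Hardy_functions_along_Omega_loglog_ii}. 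For these I would apply part~\ref{itm_main_3} of \cref{thm_main} with $W(N)=N$. By Weyl's criterion, it suffices to treat $f(k)=e(mh(k))$ for each nonzero $m\in\Z$, with $\|f\|_\infty\le 1$.

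First I check the hypotheses of part~\ref{itm_main_3}. $W(N)=N$ lies in $\mathscr{W}^*$, since $\Delta^2W=0$ eventually and so the limit in \eqref{eqn_smoothish_weights} is $0$. Also $\log W(N)/N=\log N/N\to0$. For $\vartheta\in\{\Omega,\omega,\Omega(q_n)\}$ we have $L(N)=\lfloor\log\log N\rfloor$, so $\log(W\circ L)(N)/\log N=\log\lfloor\log\log N\rfloor/\log N\to0$. For $\vartheta\in\{s_q(n),s_q(p_n)\}$ we have $L(N)=\lfloor\log N\rfloor$, and again $\log\lfloor \log N\rfloor/\log N\to0$. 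The list in the introduction confirms that each of these $\vartheta$ satisfies \eqref{gaussian_condition} with the stated $L$.

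Part~\ref{itm_main_3} then gives
\[
\E^{W\circ L}_{n\le N} e(mh(\vartheta(n)))=\E_{k\le L(N)} e(mh(k))+o_{N\to\infty}(1).
\]
The weighted average $\E^{W\circ L}_{n\le N}$ has weights $\Delta L(n)\in\{0,1\}$ normalized by $L(N)$. I need to relate this to the paper's notion of ``double-logarithmic average'' (respectively ``logarithmic average''), which I read as the weighted average with weight $\log\log N$ (respectively $\log N$). This is the step I expect to need the most care. Since $\lfloor\log\log N\rfloor$ and $\log\log N$ differ by a bounded amount, I would show that the averages with weights $W_1=\lfloor\log\log\rfloor$ and $W_2=\log\log$ agree up to $o(1)$ for bounded $f$. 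To do this, group $n$ into the blocks $[e^{e^{j}},e^{e^{j+1}})$. On each block the $W_2$-weights sum to $1$. Under $W_1$, the whole mass $1$ sits at the single point where $\lfloor\log\log n\rfloor$ jumps.

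This comparison is not automatic, because the two averages put their mass at different places inside each block. So I would instead show both sides are equivalent to uniform distribution of $h(n)$, as follows. For $W_1=L$, the averaged quantity at the jump point $n_j$ is $e(mh(\vartheta(n_j)))$, which is not obviously related to the $W_2$-average. A cleaner route is to apply part~\ref{itm_main_3} on each block separately and take partial sums: the identity holds for every $N$, so differencing at the block endpoints shows that the $W_2$-average over a block is asymptotically the same as the $L$-average. If that fails, I would fall back on redefining ``double-logarithmic average'' in the paper's sense as $\E^{\log\log}$, and rerun the proof of part~\ref{itm_main_3} with $W=\log$ composed appropriately, i.e.\ with a weight $\tilde W$ satisfying $\tilde W\circ L\approx\log\log$. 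Up to bounded error we can take $\tilde W(N)=N$, and the result is insensitive to this error because \eqref{eqn_smoothish_weights} is stable.

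Finally, the right-hand side $\E_{k\le L(N)} e(mh(k))$ tends to $0$ for every $m\neq0$ if and only if $h(n)$ is uniformly distributed mod~$1$. This uses that $L(N)$ takes every sufficiently large integer value, because $\Delta L\in\{0,1\}$ and $L\to\infty$. Combining this with Weyl's criterion on the left-hand side gives \ref{ud_of_Hardy_functions_along_Omega_loglog_ii}$\Leftrightarrow$\ref{ud_of_Hardy_functions_along_Omega_loglog_iv} and \ref{ud_of_Hardy_functions_along_Omega_loglog_ii}$\Leftrightarrow$\ref{ud_of_Hardy_functions_along_sq_log_v}.
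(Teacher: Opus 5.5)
Your route is the paper's. You handle (i)$\Leftrightarrow$(ii)$\Leftrightarrow$(iii) by citation (the paper uses \cite[Theorem~1.6]{BKS19}). You obtain (iv) and (v) from \eqref{eqn_double-log-averages_for_Omega} and \eqref{eqn_log-averages_for_sumofdigits}, together with Weyl's criterion and the fact that $L$ takes every large integer value. The gap is the step you flag yourself, and neither repair closes it.

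With $W(N)=N$ taken literally, $W\circ L=\lfloor\log\log\rfloor$ has $\Delta(W\circ L)(n)=1$ (for large $n$) only at the jump points $n_j=\lceil e^{e^j}\rceil$. Hence $\E^{W\circ L}_{n\le N}f(\vartheta(n))=\frac{1}{L(N)}\sum_{j\le L(N)}f(\vartheta(n_j))+o(1)$. This is an average along a very sparse subsequence, not a double-logarithmic average.

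A bounded change of the weight is not harmless. For $f=1_{\{n_j:\,j\in\N\}}$, the $\lfloor\log\log\rfloor$-average of $f(n)$ tends to $1$, while its $\log\log$-average tends to $0$. Nor is \eqref{eqn_smoothish_weights} ``stable'': it constrains $\Delta W$ and $\Delta^2W$, and $\log\log$ satisfies it with limit $-1$, while for $\lfloor\log\log\rfloor$ the ratio is $0/0$ for most $N$.

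In fact the sparse identity you wrote down cannot follow from \eqref{gaussian_condition} at all. Redefining $\vartheta(n_j)=1$ for all $j$ leaves \eqref{gaussian_condition} intact but makes the left side $f(1)+o(1)$. This is also why part~\ref{itm_main_3} only makes sense with a smooth weight in place of $W\circ L$: its proof applies \cref{cor_formula1.9} to $W\circ L$ and so needs $W\circ L\in\mathscr{W}^*$. Differencing at block endpoints does not help either. It only isolates single values $f(\vartheta(n_j))$, with an error $o(L(N))$, and never sees $\vartheta(n)$ for $n$ inside a block.

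The missing idea is that the bounded discrepancy between $\log\log$ and $L$ is harmless only \emph{after} the Gaussian step, once the summand depends on $n$ only through $L(n)$. So use the smooth weight from the start. Take $V(N)=\log\log N$ (resp.\ $V(N)=\log N$ for $s_q(n)$, $s_q(p_n)$); then $V\in\mathscr{W}^*$ and $\log V(N)/\log N\to0$. Averaging \cref{thm:binomial_weights} against $\E^V$ and using \cref{lem:boshernitzan} (the argument of \cref{cor_formula1.9}) gives $\E^V_{n\le N}f(\vartheta(n))=\E^V_{n\le N}G(L(n))+o(1)$, where $G(j)=\E^{2\bin}_{k\le j}f(k)$.

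Each block $\{n:L(n)=j\}$ carries $V$-mass $1+o_{j\to\infty}(1)$, and $L(N)/V(N)\to1$. Hence $\E^V_{n\le N}G(L(n))=\E_{j\le L(N)}G(j)+o(1)$. Then \cref{cor:C_of_2BC} turns this into $\E_{k\le L(N)}f(k)+o(1)$, uniformly in $\|f\|_\infty\le1$. This is \eqref{eqn_double-log-averages_for_Omega} (resp.\ \eqref{eqn_log-averages_for_sumofdigits}) with the genuine $\log\log$ (resp.\ $\log$) weight, which is what the paper invokes. Your Weyl-criterion ending then applies verbatim.
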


The equivalence between \ref{ud_of_Hardy_functions_along_Omega_loglog_i}, \ref{ud_of_Hardy_functions_along_Omega_loglog_ii}, and \ref{ud_of_Hardy_functions_along_Omega_loglog_iii} in \cref{thm_ud_of_Hardy_functions_along_Omega_loglog} is the content of \cite[Theorem~1.6]{BKS19}. The equivalence between \ref{ud_of_Hardy_functions_along_Omega_loglog_ii} and \ref{ud_of_Hardy_functions_along_Omega_loglog_iv} follows from \eqref{eqn_double-log-averages_for_Omega}.
The equivalence between \ref{ud_of_Hardy_functions_along_Omega_loglog_ii} and \ref{ud_of_Hardy_functions_along_sq_log_v} follows from \eqref{eqn_log-averages_for_sumofdigits}.

It is worth mentioning that we don't know whether it is possible to replace the double-logarithmic averages in part~\ref{ud_of_Hardy_functions_along_Omega_loglog_iv} with logarithmic averages. (However, we think that it is unlikely to be true.)

\subsection*{Applications to ergodic theory}

For the purposes of this paper, a \emph{measure preserving system} will refer to a triple $(X,\mu,T)$ where $X$ is a compact metric space, $T\colon X\to X$ is a continuous map, and $\mu$ is a Borel probability measure on $X$ that is preserved under the transformation $T$, meaning that $\mu(T^{-1}A)=\mu(A)$ holds for all Borel sets $A\subset X$.

Given a point $x\in X$, the sequence $(T^nx)_{n\in\N}$ is called the \emph{orbit} of $x$ under $T$.
Let $C(X)$ denote the space of all (complex-valued) continuous functions on $X$.
The system  $(X,\mu,T)$ is \emph{ergodic} if the orbit of $\mu$-almost every point is uniformly distributed in $X$ with respect to $\mu$, i.e., for any $f \in C(X)$ we have
\[
\lim_{N \to \infty} \frac{1}{N} \sum_{n = 1}^N f(T^{n}x) =\int_X f \ d\mu,\qquad \text{for}~\mu\text{-a.e.}~x\in X.
\]
We call $(X,\mu,T)$ \emph{uniquely ergodic} if the orbit of every point is uniformly distributed in $X$ with respect to $\mu$, that is, for any $f \in C(X)$,
\[
\lim_{N \to \infty} \frac{1}{N} \sum_{n = 1}^N f(T^{n}x) =\int_X f \ d\mu,\qquad\forall x\in X.
\]
Finally, we say $(X,\mu,T)$ is \emph{non-atomic} if the measure $\mu$ is non-atomic.

The following theorem is our dynamical application of \cref{thm_main}. It provides new insights into the behavior of orbits of the form $(T^{\vartheta(n)}x)_{n\in\N}$ in measure-preserving systems, establishing refined equidistribution properties and clarifying the distinctions between different variants of the ergodic theorem along arithmetic functions. 

\begin{theorem}
\label{thm_ergodic}
Assume $\vartheta\colon \N\to\N$ satisfies \eqref{gaussian_condition} for some $L\in\mathscr{L}$.
\begin{enumerate}
\item\label{itm_thm_erg_1}
For any uniquely ergodic measure preserving system $(X,\mu,T)$ and any $f \in C(X)$ we have
    \begin{equation*}
        \lim_{N \to \infty} \frac{1}{N} \sum_{n = 1}^N f(T^{\vartheta(n)}x) =\int_X f \ d\mu,\qquad\forall x\in X.
    \end{equation*}
\item\label{itm_thm_erg_2}
$\vartheta(n)$ has the strong sweeping out property, i.e., for any non-atomic measure preserving system $(X,\mu,T)$ there exists a residual set of Borel sets $B$ such that
\begin{align*}
\limsup_{N \to \infty} \frac{1}{N} \sum_{n = 1}^N 1_B(T^{\vartheta(n)}x) =1,\qquad \text{for}~\mu\text{-a.e.}~x\in X,
\\
\liminf_{N \to \infty} \frac{1}{N} \sum_{n = 1}^N 1_B(T^{\vartheta(n)}x) =0,\qquad \text{for}~\mu\text{-a.e.}~x\in X.
\end{align*}
\item\label{itm_thm_erg_3}
\label{item:floor} If $L(N)=\lfloor W(N)\rfloor$ for some function $W$ from a Hardy field satisfying
\[
\lim_{N\to\infty}\frac{\log(W(N))}{\log(N)}=0,
\]
then for any ergodic measure preserving system $(X,\mu,T)$ and any $f\in L^{\infty}(X,\mu)$ we have
\begin{align*}
\lim_{N \to \infty} \mathbb{E}^{W}_{n\leq N}\,  f(T^{\vartheta(n)}x) =\int_X f \ d\mu,\qquad \text{for}~\mu\text{-a.e.}~x\in X.
\end{align*}
\end{enumerate}
\end{theorem}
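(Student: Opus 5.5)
Each part of \cref{thm_ergodic} will be reduced to a statement about averages of $f(T^n x)$ (or $1_B(T^nx)$) along the integers, using \cref{thm_main}.

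\textbf{Part~\ref{itm_thm_erg_1}.} Fix $x\in X$ and $f\in C(X)$, and apply \cref{thm_main}\ref{itm_main_1} to the bounded function $a(n)=f(T^nx)$, normalised so that $\|f\|_\infty\le 1$. This gives
\[
\E_{n\le N} f(T^{\vartheta(n)}x)=\E^{2\bin}_{n\le L(N)} a(n)+\oh(1).
\]
It therefore suffices to show that $\E^{2\bin}_{n\le M} a(n)\to\int f\,d\mu$ as $M\to\infty$. This follows from unique ergodicity, which gives uniform convergence of Cesàro averages along shifted blocks, $\sup_m \E_{m<n\le m+H} a(n)\to\int f\,d\mu$ as $H\to\infty$. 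We combine this with the fact that the binomial weights $\binom{M}{\lfloor n/2\rfloor}2^{-M-1}$ are almost constant on blocks of length $H=\oh(\sqrt M)$ near the bulk $n\approx M$. Concretely, we split $\{1,\dots,2M\}$ into blocks of length $H$, where $H\to\infty$ and $H/\sqrt M\to0$. On each block the total variation of the weights is $\Oh(H/\sqrt M)$ times the block mass, so summation by parts gives the claim.

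\textbf{Part~\ref{itm_thm_erg_2}.} We again use part~\ref{itm_main_1} of \cref{thm_main}, now with $a(n)=1_B(T^nx)$, to transfer the problem to binomial averages $\E^{2\bin}_{n\le M}1_B(T^nx)$ along $M=L(N)$. Since $\Delta L\in\{0,1\}$ eventually, $L(N)$ runs through every large integer. The binomial mean averages over windows of width $\asymp\sqrt M$ around $M$, and such averages are known to have the strong sweeping out property in non-atomic systems. This is the classical Akcoglu--del Junco type result for moving averages $\frac1{\ell_k}\sum_{n=v_k}^{v_k+\ell_k}$ with $\ell_k/v_k\to0$, via the Rosenblatt--Wierdl/del Junco--Rosenblatt machinery. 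Our plan is to run the standard Baire category argument. For a set $B$ that is a union of long Rokhlin tower levels (arranged so that a window of width $\sim\sqrt M$ starting near $M$ lies almost entirely in $B$, or almost entirely outside it, for most $x$), the binomial averages are close to $1$ (resp.\ $0$) for a large-measure set of $x$. Such sets are dense, and the conditions ``$\sup_{N\ge N_0}$ average $>1-\epsilon$ on a set of measure $>1-\epsilon$'' are open, which yields a residual set of $B$. This is the step I expect to be the main obstacle. The delicate point is that the error in \cref{thm_main} must be uniform in $f$ (which it is) and that the Rokhlin-tower construction must accommodate the Gaussian-shaped weights rather than flat windows. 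I would handle the latter by first approximating the $2\bin$ weights by flat windows of width $C\sqrt M$, up to $\epsilon$ in $\ell^1$.

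\textbf{Part~\ref{itm_thm_erg_3}.} Write $L=\lfloor W\rfloor$. Since $\log W(N)/\log N\to0$, part~(2) of \cref{thm_main} applies and gives
\[
\E^W_{n\le N} f(T^{\vartheta(n)}x)=\E^W_{n\le N}\E^{2\bin}_{k\le L(n)}f(T^kx)+\oh(1).
\]
Here the sequence $f(T^kx)$ is bounded, so the uniformity is fine after truncating $f\in L^\infty$. Because $\Delta W(n)\approx\Delta L(n)$ on average and $L$ steps by $0$ or $1$, the outer $W$-average of a function of $L(n)$ is, up to $\oh(1)$, a Cesàro average over $m\le L(N)$. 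Hence the right side equals $\E_{m\le L(N)}\E^{2\bin}_{k\le m}f(T^kx)+\oh(1)$. Interchanging the sums, this is an average of $f(T^kx)$ against weights $c_M(k)=\frac1M\sum_{m\le M}\binom{m}{\lfloor k/2\rfloor}2^{-m-1}$. These weights are close in $\ell^1$ to $\frac1M 1_{k\le 2M}$ times a bounded smooth profile, essentially $\frac{1}{M}$ for $k\le M$ with a transition on scale $\sqrt M$. Summation by parts then expresses the average via the Cesàro averages $\E_{k\le K}f(T^kx)$, $K\le 2M$, which converge a.e.\ to $\int f\,d\mu$ by Birkhoff's theorem. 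This yields the claim.
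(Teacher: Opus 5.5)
Your proposal is correct in outline. After the common first step (using \cref{thm_main} to replace the $\vartheta$-averages by $\E^{2\bin}$ averages of $f(T^nx)$ along $M=L(N)$, which runs through every large integer), it takes a genuinely different route from the paper. The paper never analyses the $2\bin$ means dynamically. It specialises \cref{thm_main} to $\vartheta=\Omega$, $L(N)=\lfloor\log\log N\rfloor$, imports the known results for $\Omega(n)$ from \cite{BR22} and \cite{LOYD23} to obtain \cref{prop_2bin_ergodic_averages}, and then transfers back to a general $\vartheta$; part~3 is deduced in one line from \cref{thm_main} and Birkhoff. You prove the $2\bin$ facts directly. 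Your part~1 is complete. The weight variation is most easily bounded globally: the $2\bin$ weights are unimodal with maximum $\Oh(M^{-1/2})$, so the total block error is $\Oh(H/\sqrt M)$. Your part~3 applies \eqref{eq_main_log_scale} with the Hardy weight $W$ itself. It then turns the $W$-average of a function of $L(n)=\lfloor W(n)\rfloor$ into $\E_{m\le L(N)}$ using $\Delta W(n)\to0$, and replaces \cref{cor:C_of_2BC} by computing the kernel $c_M(k)$ via $\sum_{m\ge j}\binom{m}{j}2^{-m-1}=1$. This is self-contained and makes explicit the passage from the weight $W$ to a Ces\`aro average along $L(N)$, which the paper leaves implicit. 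The paper's route buys brevity for parts~1 and~2, at the price of resting entirely on the external $\Omega(n)$ theorems.

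In part~2 the Baire-category frame is right: the conditions $\mu\{x:\sup_{M\ge M_0}\E^{2\bin}_{n\le M}1_B(T^nx)>1-\epsilon\}>1-\epsilon$, and their $\inf<\epsilon$ analogues, are open. However, your treatment of what you call the main obstacle is wrong. The $2\bin$ weights are \emph{not} $\epsilon$-close in $\ell^1$ to a flat window of width $C\sqrt M$, for any $C$, because a Gaussian density stays at a fixed positive $\ell^1$-distance from every uniform density. No flatness is needed, only positivity. If the whole orbit segment $\{T^nx:|n-M|\le C\sqrt M\}$ lies in $E$, then $\E^{2\bin}_{n\le M}1_E(T^nx)\ge1-\epsilon(C)$, where $\epsilon(C)\to0$ is the $2\bin$ mass outside that window.

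What the density step really needs is such an $E$ of \emph{small} measure, and this is where $\sqrt M\ll M$ enters. In a Rokhlin tower of height much larger than $K$, let $E$ be the union of the levels whose index mod $K$ lies in a fixed cyclic interval of length $3C\sqrt K$. Then $\mu(E)=\Oh(C/\sqrt K)$, yet every point of the tower outside the top $3K$ levels has some $M\in[K,2K]$ whose entire window lies in $E$. Replacing a given $B$ by $B\cup E$, respectively $B\setminus E$, proves density.

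Finally, Rokhlin's lemma requires aperiodicity, not just non-atomicity. This is a flaw in the statement rather than in your method. For $X=[0,1]\times\{0,1\}$ with $T(t,i)=(t,1-i)$, condition \eqref{gaussian_condition} forces the parity of $\vartheta(n)$ to equidistribute, so the averages converge to $\frac12\big(1_B(x)+1_B(Tx)\big)$ for every $x$. You should assume ergodicity, which together with non-atomicity gives aperiodicity.
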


When $\vartheta(n)=\Omega(n)$ then part~\ref{itm_thm_erg_1} of \cref{thm_ergodic} was proved in \cite[Theorem~A]{BR22}, part~\ref{itm_thm_erg_2} was shown in \cite{LOYD23}, and part~\ref{itm_thm_erg_3} appeared in \cite[Theorem 1.2]{LM25}.
However, for other choices of $\vartheta(n)$, such as $\Omega(q_n)$, $s_q(n)$, or $s_q(p_n)$, \cref{thm_ergodic} provides new results.

\subsection*{Structure of the paper}

The paper is organized as follows.
The proof of our main technical result, \cref{thm_main}, is split across three sections. In \cref{sec_2} we prove the first part (formula \eqref{eq_main_cesaro_scale}). The proof relies on quantitative estimates for binomial coefficients and elementary results regarding equivalent methods of summation.

In \cref{sec_3} we provide a proof of the second part of \cref{thm_main} (formula \eqref{eq_main_log_scale}).
The principal idea is to show that \eqref{eq_main_cesaro_scale} implies  \eqref{eq_main_log_scale}, and the main ingredient in this derivation is \cref{lem:boshernitzan}, which is a result from an unpublished preprint of Michael Boshernitzan.

In \cref{sec_4}, we first prove that
    \begin{equation}
        \E_{n\leq N}\E_{k\leq n}^{\bin}f(k)=\E_{n\leq N}^{\bin}\E_{k\leq n}f(k) = \E_{n\leq \lfloor{N/2\rfloor}}f(n)+o_{N\to\oo}(1),
    \end{equation}
which is the content of \cref{thm:BC_of_C}.
This result is then used to prove the third and final part of \cref{thm_main} (formula \eqref{eq_main_loglog_scale}).

In \cref{sec_5}, we give conditions for convergence of a sequence with respect to $\E_{n\leq N}^{2\bin}$ averages and use these results to derive Theorem~\ref{thm_ud_of_Hardy_functions_along_Omega_Cesaro} from \cref{thm_main}.

Finally, in \cref{sec_6} we provide a proof of \cref{thm_ergodic}.

\paragraph{Acknowledgments.}We thank Tristán Radić for suggestion that condition \eqref{gaussian_condition} applies to the sequences $s_q(n)$ and $s_q(p_n)$.

\section{Proof of formula \eqref{eq_main_cesaro_scale}}
\label{sec_2}

The goal of this section is to prove the first part of \cref{thm_main}.
For the convenience of the reader, we state this part separately as a theorem.

\begin{theorem}\label{thm:binomial_weights}
Let $W\in\mathscr{W}^*$, $L\in\mathscr{L}$, and assume $\vartheta\colon \N\to\N$ satisfies \eqref{gaussian_condition}. Then uniformly over all $f\colon \N\rightarrow \C$ with $\norm{f}_{\oo}\leq 1$,
    \begin{equation*}
        \E_{n\leq N}f(\vartheta(n)) = \E_{n\leq L(N)}^{2\bin}f(n)+o_{N\to\oo}(1).
    \end{equation*}
\end{theorem}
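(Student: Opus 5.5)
We prove the statement by directly comparing two probability distributions on $\N$ in total variation distance. First, rewrite the left-hand side as
\[
\E_{n\leq N}f(\vartheta(n))=\sum_{k\in\N}\frac{|\{1\leq n\leq N:\vartheta(n)=k\}|}{N}\,f(k).
\]
By \eqref{gaussian_condition} and $\|f\|_\infty\leq 1$, this equals $\sum_{k} g(k,L(N),\sqrt{L(N)})f(k)+\oh(1)$, uniformly in $f$. Write $M=L(N)$; note $M\to\infty$ since $L\in\mathscr{W}$. Likewise, the right-hand side is $\sum_{k} \beta_M(k) f(k)$, where, by \eqref{def_2bin},
\[
\beta_M(k)=2^{-M-1}\binom{M}{\lfloor k/2\rfloor}\quad (1\leq k\leq 2M),\qquad \beta_M(k)=0 \text{ otherwise},
\]
with the convention used in \eqref{def_2bin} for the few boundary terms. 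Uniformly in $f$, it therefore suffices to show
\[
\sum_{k\in\N}\big|\beta_M(k)-g(k,M,\sqrt{M})\big|\longrightarrow 0\qquad (M\to\infty).
\]
The hypothesis $W\in\mathscr{W}^*$ plays no role in this part.

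The check of parameters is the key heuristic. If $B\sim\mathrm{Bin}(M,1/2)$ and $\xi\in\{0,1\}$ is uniform and independent, then $2B+\xi$ has distribution $\beta_M$ (up to the boundary convention). Its mean is $M+\tfrac12$ and its variance is $4\cdot\tfrac M4+\tfrac14=M+\tfrac14$. This matches $g(\cdot,M,\sqrt M)$ up to lower-order shifts, which is exactly why the parity-neutral binomial mean appears. To make this rigorous, use the local de Moivre–Laplace theorem. Uniformly for $|j-M/2|\leq M^{2/3}$,
\[
2^{-M}\binom{M}{j}=\frac{1}{\sqrt{\pi M/2}}\,e^{-\frac{(2j-M)^2}{2M}}\big(1+\oh(1)\big).
\]
Hence, for $k\in\{2j,2j+1\}$, we get $\beta_M(k)=\tfrac{1}{\sqrt{2\pi M}}e^{-(2j-M)^2/(2M)}(1+\oh(1))$. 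Since $|k-2j|\leq1$ and $|k-M|\leq 2M^{2/3}$, the exponents satisfy
\[
\frac{(k-M)^2-(2j-M)^2}{2M}=\Oh(M^{-1/3})=\oh(1),
\]
so $\beta_M(k)=g(k,M,\sqrt M)(1+\oh(1))$ uniformly in the central window $|k-M|\leq M^{2/3}$. Summing over this window, the central contribution to the variation distance is at most $\oh(1)\cdot\sum_k g(k,M,\sqrt M)=\oh(1)$.

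The tails are handled separately. Outside the window, Chebyshev's inequality (or Hoeffding's inequality) shows that the $\beta_M$-mass is at most $\Oh(M/M^{4/3})=\oh(1)$. A Gaussian tail bound shows that the $g$-mass of $\{|k-M|>M^{2/3}\}$ is also $\oh(1)$. The triangle inequality then finishes the proof.

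The main obstacle is the uniformity of the local limit approximation in the central window. I would get this from Stirling's formula with explicit error terms, expanding $\log\binom{M}{M/2+t}=M\log2-\tfrac12\log(\pi M/2)-\tfrac{2t^2}{M}+\Oh(|t|^3/M^2+1/M)$. For $|t|\leq M^{2/3}$ the error is $\Oh(M^{0}\cdot M^{-0})$, which is too crude, so I would shrink the window to $|k-M|\leq M^{1/2}\log M$. There the cubic error is $\Oh((\log M)^3/\sqrt M)=\oh(1)$, the exponent shift is $\Oh(\log M/\sqrt M)$, and Chebyshev's inequality still makes the tails $\Oh(1/\log^2 M)$. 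The remaining steps are bookkeeping.
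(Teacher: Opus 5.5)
Your proposal is correct and follows essentially the paper's route: pass via \eqref{gaussian_condition} to the Gaussian weights $g(k,L(N),\sqrt{L(N)})$, then show these are $\oh(1)$-close in total variation to $2^{-L(N)-1}\binom{L(N)}{\lfloor k/2\rfloor}$ using the local de Moivre--Laplace theorem on a central window plus tail bounds. The only difference is that the paper does this in two ratio comparisons through the intermediate weights $g(2\lfloor k/2\rfloor,L(N),\sqrt{L(N)})$ via \cref{lem:changing_weights}, whereas you merge them and bound both tails directly. Shrinking the window to $|k-M|\leq \sqrt{M}\log M$ is safe but unnecessary: for the symmetric binomial the cubic term in $\log\binom{M}{M/2+t}$ cancels, leaving an error $\Oh(t^4/M^3+t^2/M^2+1/M)$, so your original $M^{2/3}$ window already works.
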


The main idea behind the proof of \cref{thm:binomial_weights} is to first show that the Gausssian condition \eqref{gaussian_condition} implies
\[
\E_{n\leq N}f(\vartheta(n))\approx \sum_{n\in\N} g(n,L(N),\sqrt{L(N)}) f(n),
\]
and then use the fact that the values of the normalized binomial coefficients $\frac{1}{2^{M+1}}\binom{M}{\floor{m/2}}$ form a close approximation to the gaussian curve with mean $M$ and standard deviation $\sqrt{M}$, which ultimately gives
\[
\sum_{n\in\N} g(n,L(N),\sqrt{L(N)}) f(n) \approx
\sum_{n\in\N} \frac{1}{2^{L(N)+1}}\binom{L(N)}{\floor{n/2}} f(n) = \E_{n\leq L(N)}^{2\bin}f(n).
\]
The details rely on elementary yet technical computations, beginning with \cref{lem:changing_weights}, which characterizes when weighted sums yield equivalent methods of summation.

Let $(\alpha_{n,N})_{n, N\in  \N}$ and $(\beta_{n,N})_{n, N\in\N}$ be nonnegative doubly indexed sequences satisfying
\[
\lim_{N\to\oo}\sum_{n\in \N}\beta_{n,N}=\lim_{N\to\oo}\sum_{n\in \N}\alpha_{n,N}=1.
\]
We seek conditions ensuring that the averages weighted by $(\alpha_{n,N})$
and those weighted by $(\beta_{n,N})$ agree asymptotically, meaning that
\begin{equation}\label{eq:changing_weights}
\sum_{n\in\N} \alpha_{n,N}\, f(n)
=
\sum_{n\in\N} \beta_{n,N}\, f(n)
+ o_{N\to\infty}(1).
\end{equation}
Rewriting equation (\ref{eq:changing_weights}) and using the triangle inequality, we have
\begin{equation}\label{eq:changing_weights_2}
\lim_{N\to\oo}\left|\sum_{n\in \N}\alpha_{n,N}f(n)-\sum_{n\in \N}\beta_{n,N}f(n)\right|\leq 
\|{f}\|_{\oo}\cdot\lim_{N\to\oo}\sum_{n\in \N}|\alpha_{n,N}-\beta_{n,N}|,
\end{equation}
and so it suffices to show that $\lim_{N\to\oo}\sum_{n\in \N}|\alpha_{n,N}-\beta_{n,N}|=0$. To this end, we have the following lemma.

\begin{lemma}\label{lem:changing_weights}
    Suppose that $(\alpha_{n,N})_{n,N\in \N}$ and $(\beta_{n,N})_{n,N\in \N}$ are nonnegative doubly indexed sequences, and $(I_N)_{N\in \N}$ is a sequence of intervals such that 
    \begin{equation}\label{eq:condition_in_changing_weights}
     \lim_{N\to\oo}\sum_{n\in \N}\beta_{n,N}=\lim_{N\to\oo}\sum_{n\in \N}\alpha_{n,N}= \lim_{N\to\oo}\sum_{n\in I_N}\alpha_{n,N}=1.
    \end{equation}
    Assume that there is a function $E\colon\N\rightarrow \R$ which tends to $0$ such that $|1-\frac{\beta_{n,N}}{\alpha_{n,N}}|\leq E(N)$ for all $N\in \N$ and $n\in I_N$.
     Then uniformly over all $f:\mathbb{N}\rightarrow \C$ with $\norm{f}_{\oo}\leq 1$, 
    \begin{equation}
\sum_{n\in \N}\alpha_{n,N}f(n)=\sum_{n\in \N}\beta_{n,N}f(n)+o_{N\to\oo}(1).
    \end{equation}
\end{lemma}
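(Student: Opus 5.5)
As noted in \eqref{eq:changing_weights_2}, it suffices to show
\[
\lim_{N\to\oo}\sum_{n\in \N}|\alpha_{n,N}-\beta_{n,N}|=0 .
\]
This gives the claim uniformly over all $f$ with $\|f\|_\infty\leq 1$, since the bound does not depend on $f$. I would split the sum into the part over $n\in I_N$ and the part over $n\notin I_N$, and treat them separately.

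On $I_N$ the hypothesis gives $|\alpha_{n,N}-\beta_{n,N}|=\alpha_{n,N}\,|1-\beta_{n,N}/\alpha_{n,N}|\leq E(N)\,\alpha_{n,N}$. Here the hypothesis implicitly requires $\alpha_{n,N}>0$ on $I_N$; if some $\alpha_{n,N}=0$, one reads the condition as forcing $\beta_{n,N}=0$ there as well. Summing over $I_N$ yields
\[
\sum_{n\in I_N}|\alpha_{n,N}-\beta_{n,N}|\leq E(N)\sum_{n\in\N}\alpha_{n,N},
\]
and this tends to $0$ because $E(N)\to 0$ and $\sum_n\alpha_{n,N}\to 1$.

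Off $I_N$ I would use the crude bound $|\alpha_{n,N}-\beta_{n,N}|\leq\alpha_{n,N}+\beta_{n,N}$. The $\alpha$-tail satisfies
\[
\sum_{n\notin I_N}\alpha_{n,N}=\sum_{n}\alpha_{n,N}-\sum_{n\in I_N}\alpha_{n,N}\to 1-1=0
\]
by \eqref{eq:condition_in_changing_weights}. For the $\beta$-tail, the key observation is that $\beta$ also concentrates on $I_N$. From the estimate on $I_N$,
\[
\sum_{n\in I_N}\beta_{n,N}\geq (1-E(N))\sum_{n\in I_N}\alpha_{n,N}\to 1,
\]
and since $\sum_n\beta_{n,N}\to 1$, it follows that $\sum_{n\notin I_N}\beta_{n,N}\to 0$.

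Adding the two parts gives the claimed limit $0$. There is no real obstacle here; the only point needing care is controlling the $\beta$-mass outside $I_N$, where no pointwise comparison is assumed. This is handled by the mass-transfer argument just given, which uses nonnegativity of the weights and the normalization of $\sum_n\beta_{n,N}$.
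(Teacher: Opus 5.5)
Your proof is correct and follows essentially the same route as the paper. Both arguments first show that the $\beta$-mass also concentrates on $I_N$, so that both tails off $I_N$ vanish, and then bound the contribution on $I_N$ by $E(N)\sum_{n\in I_N}\alpha_{n,N}\to 0$; your inequality $\sum_{n\in I_N}\beta_{n,N}\ge(1-E(N))\sum_{n\in I_N}\alpha_{n,N}$ is a slightly cleaner rendering of the paper's first step, which the paper writes as an equality.
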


\begin{proof}
Observe that $\lim_{N\to\oo}\sum_{n\in I_N}\beta_n=1$, since
\begin{align*}
  & \lim_{N\to\oo} \sum_{n\in I_N}\beta_n = \lim_{N\to\oo} \sum_{n\in I_N}(\beta_n-\alpha_n) +  \lim_{N\to\oo} \sum_{n\in I_N}\alpha_n\\
   =& \lim_{N\to\oo} \sum_{n\in I_N}\alpha_n\left(\frac{\beta_n}{\alpha_n}-1\right) + 1 = \lim_{N\to\oo} E(N)+1 =1.
\end{align*}
Then 
\begin{align}
    \sum_{n\in \N}|a_{n,N}-\beta_{n,N}| = \sum_{n\in I_N}|a_{n,N}-\beta_{n,N}|+o_{N\to\oo}(1) = \sum_{n\in I_N}\alpha_{n,N}\cdot \left|1-\frac{\beta_{n,N}}{\alpha_{n,N}}\right|+o_{N\to\oo}(1).
\end{align}

Further,

\begin{align*}
 \lim_{N\to\oo}\sum_{n\in I_N}\alpha_{n,N}\cdot \left|1-\frac{\beta_{n,N}}{\alpha_{n,N}}\right|
    &\leq \lim_{N\to\oo}\sum_{n\in I_N}\alpha_{n,N}\cdot E(N)
    \\
    &=\lim_{N\to\oo}E(N)\cdot\sum_{n\in I_N}a_{n,N} 
    \\
    &= 0\cdot 1=0.
\end{align*}
This means that $\lim_{N\to\oo}\sum_{n\in \N}|\alpha_{n,N}-\beta_{n,N}|=0$ as desired.

\end{proof}

\begin{remark}
    By an almost identical argument it can be shown that the condition $|1-\frac{\beta_{n,N}}{\alpha_{n,N}}|\leq E(N)$ in the lemma above can be replaced by the assumption that $|1-\frac{\beta_{n,N}}{\alpha_{n,N}}|\leq E(n)$, so long as $\lim_{N\to\oo}\alpha_{n,N} = 0$ for each fixed $n\in \N$.
\end{remark}

\begin{proof}[Proof of \cref{thm:binomial_weights}]
Recall that $g(x,\mu,\sigma)$ denotes the probability density function of the Gaussian normal distribution. For $n,N\in \N$, define
    \begin{itemize}
        \item $\alpha_{n,N} = \frac{1}{N}\cdot|\{1\leq m\leq N:\vartheta (m)=n\}|$,
        \item $\beta_{n,N} = g(n,L(N),\sqrt{L(N)})$,
        \item $\gamma_{n,N} = g(2\floor{n/2},L(N),\sqrt{L(N)})$,
        \item $\delta_{n,N} = \frac{1}{2^{L(N)+1}}\binom{L(N)}{\floor{n/2}}$.
    \end{itemize}
   We will show that the values 
   \begin{equation*}
       \E_{n\leq N}f(\vartheta(n)),\ \sum_{n\in \N}\alpha_{n,N}f(n),\ \sum_{n\in \N}\beta_{n,N}f(n),\ \sum_{n\in \N}\gamma_{n,N}f(n),\ \sum_{n\in \N}\delta_{n,N}f(n),\ \E_{n\leq L(N)}^{2\bin}f(n)
   \end{equation*}
    are each equal up to a $o_{N\to\oo}(1)$ term, uniformly over all $f\colon \N\rightarrow \C$ with $\norm{f}_{\oo}\leq 1$. First we can note the equalities $ \E_{n\leq N}f(\vartheta(n))=\sum_{n\in \N}\alpha_{n,N}f(n)$ and $\E_{n\leq L(N)}^{2\bin}f(n) =\ \sum_{n\in \N}\delta_{n,N}f(n)$ hold by definition. Next, we have $\sum_{n\in \N}\alpha_{n,N}f(n)=\sum_{n\in \N}\beta_{n,N}f(n)+o_{N\to\oo}(1)$ by equations (\ref{gaussian_condition}) and (\ref{eq:changing_weights_2}).

    The last two equalities will follow from Lemma \ref{lem:changing_weights}. Consider $|1-\frac{\beta_{n,N}}{\gamma_{n,N}}|$. When $n$ is even, we have $\beta_{n,N} = \gamma_{n,N}$ and so $|1-\frac{\beta_{n,N}}{\gamma_{n,N}}|=0$. When $n$ is odd, we have $\gamma_{n,N} = \beta_{n-1,N}$ and so
    \begin{equation}
        \frac{\beta_{n,N}}{\gamma_{n,N}}  = \frac{\frac{1}{\sqrt{L(N)}\cdot \sqrt{2\pi}} \, e^{-\frac{(n-L(N))^2}{2L(N)}}}{\frac{1}{\sqrt{L(N)}\cdot  \sqrt{2\pi}} \, e^{-\frac{((n-1)-L(N))^2}{2L(N)}}} = e^{\frac{2L(N)-2n+1}{2L(N)}} = e^{1-\frac{n}{L(N)}+\frac{1}{2L(N)}}.
    \end{equation}
    
    Next, we will approximate a sum of the form $\sum_{n=A}^B\beta_{n,N}$ with the corresponding integral $\int_A^Bg(x,L(N),\sqrt{L(N)})dx$. However, we know that 
    \[
    \int_A^Bg(x,L(N),\sqrt{L(N)})dx = \int_A^B \frac{1}{\sqrt{L(N)}\cdot \sqrt{2\pi}} \, e^{-\frac{(x-L(N))^2}{2L(N)}}dx = \int_{\frac{A-L(N)}{\sqrt{2L(N)}}}^{\frac{B-L(N)}{\sqrt{2L(N)}}} \frac{1}{\sqrt{\pi}} \, e^{-x^2}dx.
    \]
    From this it follows that we can put $I_N = [L(N) - (L(N))^{3/5},L(N)+(L(N))^{3/5}]$ so that $\sum_{n\in I_N}\beta_{n,N} \to 1$ as $N\to\oo$. But for $n\in I_N$ we have 
    $$
    |1-e^{1-\frac{n}{L(N)}+\frac{1}{2L(N)}}|\leq1- e^{1-\frac{L(N)-(L(N))^{3/5}}{L(N)}+\frac{1}{2L(N)}} = 1-e^{\frac{-1}{L(N)^{2/5}}+\frac{1}{2L(N)}}\to 0 \text{ as }N\to\oo.
    $$
    We can conclude that the hypothesis of Lemma \ref{lem:changing_weights} is satisfied and so $\sum_{n\in \N}\beta_{n,N}f(n) = \sum_{n\in \N}\gamma_{n,N}f(n)+o_{N\to\oo}(1)$. For the last equality, we refer to a fact about the asymptotics of binomial coefficients, whose proof can be found in \cite[Section 5.4]{Spencer14}. Namely, there is a function $E\colon \N\rightarrow \R$  with $\lim_{N\to\oo}E(N)=0$ such that for $|N-2n| = O(N^{2/3})$,
    \[
    \left|1-\frac{\sqrt{\frac{2}{\pi N }}\cdot 2^N\cdot e^{\frac{(N-2n)^2}{2N}}}{\binom{N}{n}}\right|\leq E(N).
    \]
Seeing as how $\frac{\sqrt{\frac{2}{\pi L(N) }}\cdot 2^{L(N)}\cdot e^{\frac{(L(N)-2n)^2}{2L(N)}}}{\binom{L(N)}{n}} = \frac{\frac{1}{\sqrt{2\pi L(N) }}\cdot e^{\frac{(L(N)-2n)^2}{2L(N)}}}{\frac{1}{2^{L(N)+1}}\binom{L(N)}{n}} = \frac{\gamma_{2n,N}}{\delta_{2n,N}}$, it follows that $|1-\frac{\gamma_{n,N}}{\delta_{n,N}}|\leq E(L(N))$ for all $n$ in an interval of the form $[L(N)-O(L(N)^{2/3}),L(N)+O(L(N)^{2/3})]$.  By Lemma \ref{lem:changing_weights}, we get $\sum_{n\in \N}\gamma_{n,N}f(n) = \sum_{n\in \N}\delta_{n,N}f(n) +o_{N\to\oo}(1)$, completing the proof.
\end{proof}

\section{Proof of formula \eqref{eq_main_log_scale}}
\label{sec_3}

In this section, we will provide some background on weighted averages in order to derive equation (\ref{eq_main_log_scale}) from equation (\ref{eq_main_cesaro_scale}). We will begin with some preliminary facts, the first of which is the Stolz-Ces\`aro Theorem

\begin{theorem}[Stolz-Ces\`aro]\label{thm:stolz_cesaro}
Let $A\colon \N\rightarrow \C$ and $B\colon\N\rightarrow (0,\oo)$ be functions such that $B$ is strictly increasing with $\lim_{N\to\oo}B(n) = \oo$. Let $\ell\in \C$. 
\begin{equation}\label{eq:stolz_cesaro}
    \text{ If } \lim_{N\to\oo}\frac{\Delta A(N)}{\Delta B(N)} = \ell \text{ then } \lim_{N\to\oo}\frac{ A(N)}{ B(N)} = \ell.
\end{equation}
\end{theorem}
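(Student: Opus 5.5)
The plan is the classical $\epsilon$-argument: split off an initial segment whose contribution is negligible because $B(N)\to\oo$, and control the tail by telescoping. First I will fix $\epsilon>0$. From the hypothesis $\Delta A(n)/\Delta B(n)\to\ell$, I will choose $N_0$ so that
\[
|\Delta A(n)-\ell\,\Delta B(n)|\leq \epsilon\,\Delta B(n)\quad\text{for all } n> N_0.
\]
Since $B$ is strictly increasing, $\Delta B(n)>0$ for $n\ge 2$, so this inequality is meaningful and no division issue arises; for $n=1$ we also have $\Delta B(1)=B(1)>0$.

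Next I will telescope. For $N>N_0$ the definition of $\Delta$ gives
\[
A(N)-\ell B(N)=\bigl(A(N_0)-\ell B(N_0)\bigr)+\sum_{n=N_0+1}^{N}\bigl(\Delta A(n)-\ell\,\Delta B(n)\bigr).
\]
The triangle inequality then bounds the sum by $\epsilon\sum_{n=N_0+1}^N\Delta B(n)=\epsilon\,(B(N)-B(N_0))\le \epsilon B(N)$. Here I use positivity of the increments, so that the sum of $|\Delta B|$ telescopes exactly.

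Dividing by $B(N)>0$ gives
\[
\left|\frac{A(N)}{B(N)}-\ell\right|\leq \frac{|A(N_0)-\ell B(N_0)|}{B(N)}+\epsilon.
\]
The first term is a fixed constant divided by $B(N)$, and it tends to $0$ because $B(N)\to\oo$. Therefore $\limsup_{N\to\oo}|A(N)/B(N)-\ell|\le\epsilon$, and since $\epsilon$ was arbitrary the limit equals $\ell$.

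There is no real obstacle; the argument is complex-valued only in $A$ and $\ell$, which the triangle inequality handles directly. The only point needing care is that monotonicity of $B$ is what makes $\sum|\Delta B|$ equal to $B(N)-B(N_0)$, so that the error is controlled by $B(N)$ itself.
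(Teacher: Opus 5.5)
Your proof is correct and complete. It is the standard $\epsilon$-splitting and telescoping argument: positivity of $\Delta B(n)$ is used correctly both to clear denominators and to telescope $\sum_{n=N_0+1}^{N}\Delta B(n)=B(N)-B(N_0)\le B(N)$. The paper gives no proof of its own and simply quotes the result as a classical fact, so there is nothing further to compare.
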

Next is a standard lemma, say from \cite[Theorem 3.2.7]{Boos}.
\begin{lemma}\label{lem:regularity}
    Let $W\in \mathscr{W}$ and let $f:\N\rightarrow \C$. Suppose that $\lim_{n\to\oo}f(n) = \ell$. Then $\lim_{N\to\oo}\E^W_{n\leq N}f(n) = \ell$.
\end{lemma}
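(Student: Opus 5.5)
The plan is to reduce to the Stolz--Ces\`aro theorem (\cref{thm:stolz_cesaro}), applied to the numerator and denominator of the weighted average. Set
\[
A(N)=\sum_{n=1}^N \Delta W(n)\, f(n),\qquad B(N)=W(N),
\]
so that $\E^W_{n\le N} f(n)=A(N)/B(N)$ and, for $N\ge 2$,
\[
\frac{\Delta A(N)}{\Delta B(N)}=\frac{\Delta W(N)\, f(N)}{\Delta W(N)}=f(N)\to\ell.
\]
The only obstruction to applying \cref{thm:stolz_cesaro} directly is that $W\in\mathscr{W}$ is merely eventually non-decreasing, not strictly increasing. In particular $\Delta W(N)$ may vanish, and then the quotient above is undefined.

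I would avoid this issue by proving the Stolz--Ces\`aro estimate by hand, which is just as short. First I discard the finitely many initial terms. Choose $N_0$ such that $W$ is non-decreasing on $[N_0,\infty)$ and $W(N_0)>0$. Since $W(N)\to\infty$, the contribution of the terms with $n\le N_0$ to $\E^W_{n\le N}f(n)$ is a fixed bounded quantity divided by $W(N)$, hence $\oh(1)$. Note that $f$ is bounded on the tail, since it converges.

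Next, fix $\epsilon>0$ and pick $N_1\ge N_0$ such that $|f(n)-\ell|<\epsilon$ for all $n>N_1$. For $N>N_1$, write
\[
\frac{1}{W(N)}\sum_{n=N_1+1}^N \Delta W(n)\,(f(n)-\ell).
\]
Because $\Delta W(n)\ge 0$ for $n>N_0$, this expression is bounded in absolute value by
\[
\epsilon\,\frac{W(N)-W(N_1)}{W(N)}\le\epsilon .
\]
The remaining terms are handled as follows:
\begin{itemize}
\item the quantity $\ell\,(W(N)-W(N_1))/W(N)$ tends to $\ell$;
\item the head $\frac{1}{W(N)}\sum_{n\le N_1}\Delta W(n)f(n)$ tends to $0$, because $W(N)\to\infty$.
\end{itemize}
Combining these, $\limsup_{N\to\infty}|\E^W_{n\le N}f(n)-\ell|\le\epsilon$, and letting $\epsilon\to 0$ finishes the proof.

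There is no real obstacle here. The one point needing care is the nonnegativity of $\Delta W$ on the tail: it is exactly what allows the weighted error to be controlled by $\epsilon$ times the total weight, and it is the reason the hypothesis $W\in\mathscr{W}$ (eventually non-decreasing and unbounded) is the right one.
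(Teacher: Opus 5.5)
Your proof is correct and follows the same route as the paper, which simply applies the Stolz--Ces\`aro theorem with $A(N)=\sum_{n=1}^N\Delta W(n)f(n)$ and $B(N)=W(N)$, so that $\Delta A(N)/\Delta B(N)=f(N)$. Your by-hand $\epsilon$-argument is the proof of Stolz--Ces\`aro specialized to this case, and it is slightly more careful than the paper: it also covers weights $W\in\mathscr{W}$ with $\Delta W(N)=0$ for some $N$, which the paper's appeal to \cref{thm:stolz_cesaro} (stated only for strictly increasing $B$) glosses over.
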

\begin{proof}
    Apply Theorem \ref{thm:stolz_cesaro} with $A(N) = \sum_{n=1}^N\Delta W(n)f(n)$ and $B(N) = W(N)$, so that $\frac{\Delta A(N)}{\Delta B(N)} = \frac{\Delta W(N) f(N)}{\Delta W(N)} = f(N)$.
\end{proof}

When $W$ grows fast enough, we have a converse to this statement.

\begin{lemma}\label{lem:exp_weights}
    Let $W\in \mathscr{W}$ satisfy $\lim_{N\to\oo}\frac{\Delta W(N)}{W(N)}>0$, let $f:\N\rightarrow \C$, and let $\ell\in \C$. Suppose that $\lim_{N\to\oo}\E_{n\leq N}^Wf(n) = \ell$. Then $\lim_{N\to\oo}f(N)=\ell$.
\end{lemma}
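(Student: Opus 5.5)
Write $A(N)=\sum_{n=1}^N \Delta W(n) f(n)$, so that $\E^W_{n\le N} f(n)=A(N)/W(N)$. The plan is to express $f(N)$ exactly through the two consecutive averages $A(N)/W(N)$ and $A(N-1)/W(N-1)$, and then pass to the limit. Since $\Delta A(N)=\Delta W(N)f(N)$ for $N\ge 2$, we have
\[
f(N)=\frac{A(N)-A(N-1)}{\Delta W(N)}=\frac{W(N)}{\Delta W(N)}\cdot\frac{A(N)}{W(N)}-\frac{W(N-1)}{\Delta W(N)}\cdot\frac{A(N-1)}{W(N-1)}.
\]

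Let $c=\lim_{N\to\oo}\Delta W(N)/W(N)$, which is positive by hypothesis.
\begin{itemize}
\item Because $W$ is eventually non-decreasing, eventually $0\le \Delta W(N)\le W(N)$, so $c\in(0,1]$.
\item Consequently $\Delta W(N)>0$ for all large $N$, and the division above is legitimate eventually.
\item The first coefficient satisfies $W(N)/\Delta W(N)\to 1/c$.
\item The second coefficient satisfies $W(N-1)/\Delta W(N)=W(N)/\Delta W(N)-1\to 1/c-1$.
\end{itemize}

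Both ratios $A(N)/W(N)$ and $A(N-1)/W(N-1)$ tend to $\ell$ by assumption, and the coefficients have the finite limits just computed. Hence
\[
f(N)\to \frac{\ell}{c}-\Big(\frac1c-1\Big)\ell=\ell .
\]

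The only delicate point is the case $c=1$. Then $W(N-1)/W(N)\to0$, and the second coefficient tends to $0$. The argument still goes through, because that coefficient is multiplied by the bounded convergent quantity $A(N-1)/W(N-1)$, so the product tends to $0$. No Stolz–Cesàro argument is needed, and I expect no real obstacle beyond checking the eventual positivity of $\Delta W$.
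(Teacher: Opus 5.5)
Your proof is correct and follows essentially the same route as the paper, which isolates $f(N)$ via $\E^W_{n\le N}f(n)=\frac{W(N-1)}{W(N)}\E^W_{n\le N-1}f(n)+\frac{\Delta W(N)}{W(N)}f(N)$ and passes to the limit using $\frac{W(N-1)}{W(N)}=1-\frac{\Delta W(N)}{W(N)}$. Your checks that $c\in(0,1]$ and that $\Delta W(N)>0$ eventually are details the paper leaves implicit, and the case $c=1$ needs no separate treatment because your general formula already covers it.
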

\begin{proof}
    Note that 
    \begin{equation*}
   \E_{n\leq N}^Wf(n) =\frac{W(N-1)}{W(N)}\E_{n\leq N-1}^Wf(n)+\frac{\Delta W(N)}{W(N)}f(N),
\end{equation*}
and so 
\begin{equation*}
       f(N) = \left(\frac{\Delta W(N)}{W(N)}\right)^{-1}\left(\E_{n\leq N}^Wf(n)-\frac{W(N-1)}{W(N)}\E_{n\leq N-1}^Wf(n)\right).
\end{equation*}
Observe that $\frac{W(N-1)}{W(N)} = 1-\frac{\Delta W(N)}{W(N)}$, which after taking limits gives that 
\[
\lim_{N\to\oo}f(N) = \frac{\ell-(1-\lim_{N\to\oo}\frac{\Delta W(N)}{W(N)})\cdot \ell}{\lim_{N\to\oo}\frac{\Delta W(N)}{W(N)}}=\ell.
\]
\end{proof}

The next lemma can be attributed to Michael Boshernitzan, who has a variant for Hardy functions in an unpublished preprint \cite{Boshernitzan_preprint}. The proof we provide here is adapted from Boshernitzan's proof.

\begin{lemma}\label{lem:boshernitzan}
   Let $W\in \mathscr{W^*}$ and suppose that $\lim_{N\to\oo}\frac{\log W(N)}{\log(N)}=0$. Then uniformly over all $f:\N\rightarrow \C$ with $\norm{f}_{\oo}\leq 1$,
   \begin{equation}\label{eq:boshernitzan}
        \E_{n\leq N}^W(\E_{k\leq n}f(k)) = \E_{n\leq N}^Wf(n)+o_{N\to\oo}(1).
    \end{equation}
\end{lemma}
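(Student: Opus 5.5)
The plan is to swap the order of summation and compare the two resulting weight sequences in $\ell^1$, exactly as in \eqref{eq:changing_weights_2}. Write $v(n)=n\,\Delta W(n)$. Then
\[
\E_{n\leq N}^W\big(\E_{k\leq n}f(k)\big)=\frac{1}{W(N)}\sum_{n=1}^N\frac{\Delta W(n)}{n}\sum_{k\le n}f(k)=\sum_{k=1}^N\beta_{k,N}f(k),\qquad \beta_{k,N}=\frac{1}{W(N)}\sum_{n=k}^N\frac{v(n)}{n^2}.
\]
The right-hand side of \eqref{eq:boshernitzan} is $\sum_k\alpha_{k,N}f(k)$ with $\alpha_{k,N}=\Delta W(k)/W(N)=v(k)/(kW(N))$. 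It therefore suffices to show $\sum_{k\le N}|\alpha_{k,N}-\beta_{k,N}|\to0$, and this automatically gives uniformity over $\|f\|_\infty\le1$.

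\textbf{Step 1: identify the regularity of $W$.} I will show that the limit $c=\lim N\Delta^2W(N)/\Delta W(N)$ in \eqref{eqn_smoothish_weights} must equal $-1$.
\begin{itemize}
\item If $c>-1$, then $\Delta W(N)/\Delta W(N-1)=1+(c+o(1))/N$. Multiplying out gives $\Delta W(N)\gg N^{c-\epsilon}$, hence $W(N)\gg N^{c+1-\epsilon}$ with $c+1-\epsilon>0$. This contradicts $\log W(N)/\log N\to0$.
\item If $c<-1$, the same product gives $\Delta W(N)\ll N^{c+\epsilon}$ with $c+\epsilon<-1$. Then $W$ is bounded, contradicting $W\in\mathscr W$. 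The cases $c=\pm\infty$ are handled the same way.
\end{itemize}
With $c=-1$, the identity $v(N)-v(N-1)=N\Delta^2W(N)+\Delta W(N-1)$ yields $\Delta v(N)=o(\Delta W(N))=o(v(N)/N)$. So $v$ is slowly varying in the discrete sense.

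\textbf{Step 2: telescope $\alpha_{k,N}$ and compare.} Write
\[
\frac{v(k)}{k}=\sum_{n=k}^{N}\Big(\frac{v(n)}{n}-\frac{v(n+1)}{n+1}\Big)+\frac{v(N+1)}{N+1}.
\]
Subtracting termwise from $\sum_{n=k}^N v(n)/n^2$, each summand of $W(N)(\beta_{k,N}-\alpha_{k,N})$ is of the form
\[
\frac{\Delta v(n+1)}{n+1}+O\!\Big(\frac{v(n)}{n^2}\Big).
\]
Summing over $k\le N$ and interchanging sums again (the factor $1/(n+1)$ is absorbed by the $n$ values of $k\le n$), I expect
\[
\sum_{k\le N}|\alpha_{k,N}-\beta_{k,N}|\ll\frac{1}{W(N)}\Big(\sum_{n\le N+1}|\Delta v(n)|+\sum_{n\le N}\frac{v(n)}{n}\Big)\cdot o(1)+\frac{v(N+1)}{W(N)}.
\]
More precisely:
\begin{itemize}
\item The $O(v(n)/n^2)$ terms contribute $\frac{1}{W(N)}\sum_n v(n)/n\cdot\frac1n$. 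This is $o(1)$ by \cref{lem:regularity}, applied with weights $W$ to the sequence $1/n\to0$.
\item The $\Delta v$ terms contribute $\frac{1}{W(N)}\sum_{n\le N}|\Delta v(n)|$. Since $|\Delta v(n)|=o(\Delta W(n))$, this is $o(1)$ by \cref{lem:regularity} again, or directly by Stolz--Ces\`aro (\cref{thm:stolz_cesaro}).
\end{itemize}

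\textbf{Step 3 (expected main obstacle): the boundary term.} It remains to show $N\Delta W(N)/W(N)=v(N)/W(N)\to0$; this is where the hypothesis $\log W/\log N\to0$ is really used beyond Step 1. The idea is to use slow variation of $v$. From $\Delta v=o(v/n)$ one gets $v(n)\ge v(N)(n/N)^{\delta}$ for $N^{1/2}\le n\le N$ and any fixed $\delta>0$, once $N$ is large. Then
\[
W(N)\ge\sum_{N^{1/2}<n\le N}\frac{v(n)}{n}\ge v(N)\,N^{-\delta}\sum_{N^{1/2}<n\le N}n^{\delta-1}\gg_\delta \frac{v(N)}{\delta}.
\]
Letting $\delta\to0$ gives $v(N)/W(N)\to0$. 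Combining Steps 1--3 with \eqref{eq:changing_weights_2} finishes the proof.

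The delicate points are making the product estimates in Steps 1 and 3 rigorous when $\Delta W$ may vanish for finitely many $n$ (handled because $W$ is eventually non-decreasing, so one can start the products late), and controlling the error terms uniformly in $k$.
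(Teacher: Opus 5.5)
Your proof is correct, and its skeleton is closer to the paper's than it looks. The telescoped summand you obtain is exactly
\[
\frac{\Delta v(n+1)}{n+1}+\frac{v(n)}{n^2(n+1)}=\Delta^2W(n+1)+\frac{\Delta W(n)}{n}=\frac{\Delta W(n)}{n}\Big(\frac{n\,\Delta^2W(n+1)}{\Delta W(n)}+1\Big),
\]
which is the quantity the paper's summation by parts produces. So both arguments reduce to $\frac{1}{W(N)}\sum_{n\le N}\Delta W(n)\big|\frac{n\Delta^2W(n+1)}{\Delta W(n)}+1\big|\to0$ plus the boundary term $N\Delta W(N)/W(N)\to0$; your $\ell^1$ formulation just makes the uniformity in $f$ explicit.

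The real difference is in how these two facts are proved. The paper writes a chain of Stolz--Ces\`aro steps running downward from $\log W(N)/\log N\to0$ to the differences. That is the converse direction of \cref{thm:stolz_cesaro}, and it is valid only because the limit in \eqref{eqn_smoothish_weights} exists, so the chain can be run upward from it. In exchange, the paper packages $W(N-1)/W(N)\to1$ and $N\Delta W(N)/W(N)\to0$ as claims that are reused later (e.g.\ in the proof of \cref{thm_1.2.3}). Your product estimates exclude every $c\neq-1$ directly, and your Karamata-type lower bound obtains $v(N)/W(N)\to0$ from the slow variation of $v$ alone. This is more self-contained, and it shows, contrary to your Step 3 remark, that the hypothesis $\log W(N)/\log N\to0$ is used only to force $c=-1$.

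Two minor fixes are needed. First, the Step 2 remainder is $v(n)/(n^2(n+1))=O(v(n)/n^3)$, not $O(v(n)/n^2)$. With the weaker bound, interchanging sums would only give $\frac{1}{W(N)}\sum_{n\le N}v(n)/n=1$; your bullet point does use the correct, sharper size. Second, eventual positivity of $\Delta W$ (needed for your products and for $v>0$) comes from the ratio in \eqref{eqn_smoothish_weights} being defined for all large $N$. It does not follow from $W$ being eventually non-decreasing, which allows $\Delta W(n)=0$ infinitely often.
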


\begin{proof}

Recall the summation by parts formula\footnote{This formula for summation by parts holds because of our convention that $\Delta x_n = x_n-x_{n-1}$. A different convention for $\Delta x_n$ would give a different summation by parts formula.}, which says that for sequences $(x_n),(y_n)$,
    \begin{align}\label{eq:summation_by_parts}
        \sum_{n=1}^N\Delta x_n \cdot y_n = x_{N}y_{N}-\sum_{n=1}^{N-1}x_n\cdot \Delta y_{n+1}.
    \end{align}

  Let $f\colon \N\rightarrow \C$ satisfy $\norm{f}_{\oo}\leq 1$. 
  Now we will take the expression for $\E_{n\leq N}^Wf(n)$ and manipulate it into the form $\E_{n\leq N}^W\E_{k\leq n}f(k)+o_{N\to\oo}(1)$. Put $F(n) = \sum_{k=1}^nf(k)$ so that $\Delta F(n) = f(n)$ and $\frac{F(N)}{N}=\E_{n\leq N}f(n)$. Apply summation by parts to obtain
    \begin{align*}
        &\E^W_{n\leq N}f(n) =\frac{1}{W(N)}\sum_{n=1}^N\Delta W(n)\cdot f(n) =\frac{1}{W(N)}\sum_{n=1}^N\Delta W(n)\cdot \Delta F(n)\\
        =& \frac{1}{W(N)}\left(\Delta W(N)\cdot F(N)- \sum_{n=1}^{N-1}F(n)\cdot \Delta^2 W(n+1) \right)\\
        =& \frac{N\cdot \Delta W(N)}{W(N)}\cdot \frac{F(N)}{N}-\frac{1}{W(N)}\sum_{n=1}^{N-1}\Delta W(n)\cdot \frac{F(n)}{n}\cdot \frac{n\cdot \Delta^2W(n+1)}{\Delta W(n)},
    \end{align*}
    which means that 
    \begin{equation}\label{eq:boshernitizan_lem_eq}
        \E^W_{n\leq N}f(n)=\frac{N \Delta W(N)}{W(N)} \E_{n\leq N}f(n)-\frac{W(N-1)}{W(N)} \E_{n\leq N-1}^W\left(\frac{n\cdot \Delta^2W(n+1)}{\Delta W(n)}\E_{m\leq n}f(m)\right).
    \end{equation}
All that is left is to show the following claims:
\begin{enumerate}[label = (\arabic*)]
    \item $\lim_{N\to\oo}\frac{W(N-1)}{W(N)}=1$.
    \item $\lim_{N\to\oo}\frac{N\cdot \Delta^2W(N+1)}{\Delta W(N)}=-1$.
    \item $\lim_{N\to\oo}\frac{N\cdot \Delta W(N)}{W(N)}=0$.
    \item For any bounded function $g\colon \N\rightarrow \C$, $\E_{n\leq N}^Wg(n) = \E_{n\leq N-1}^Wg(n)+o_{N\to\oo}(1)$.
\end{enumerate}
From these claims, equation (\ref{eq:boshernitizan_lem_eq}) becomes
\begin{align*}
    \E^W_{n\leq N}f(n) =&o_{N\to\oo}(1)- (1+o_{N\to\oo}(1))\cdot \E_{n\leq N-1}^W\left(\E_{m\leq n}f(m)\cdot (-1+o_{n\to\oo}(1))\right)\\
    =& \E_{n\leq N-1}^W\left(\E_{m\leq n}f(m)\right)+o_{N\to\oo}(1)\\
    =& \E_{n\leq N}^W\left(\E_{m\leq n}f(m)\right)+o_{N\to\oo}(1).
\end{align*}
as desired.

To prove the above claims, we will make use of Theorem \ref{thm:stolz_cesaro} along with the fact that $\lim_{N\to\oo}\frac{N\cdot \Delta^2 W(N)}{\Delta W(N)}$ exists since $W\in \mathscr{W}^*$. First, we know that $\lim_{N\to\oo}\frac{\log W(N)}{\log N}=0$ and so $\lim_{N\to\oo}\frac{\log W(N)}{ N}=0$. Then,
\begin{equation*}
    0=\lim_{N\to\oo}\frac{\log W(N)}{N} =  \lim_{N\to\oo}\frac{\Delta \log W(N)}{\Delta (N)} = \lim_{N\to\oo}\log\left(\frac{W(N)}{W(N-1)}\right).
\end{equation*}
Therefore $\lim_{N\to\oo}\frac{W(N)}{W(N-1)} = 1$ and we have proven (1). Next, we have
\begin{equation}
    1=\lim_{N\to\oo}\frac{W(N+1)}{W(N)} =  \lim_{N\to\oo}\frac{\Delta W(N+1)}{\Delta W(N)} =\lim_{N\to\oo}\frac{\Delta^2 W(N+1)}{\Delta^2 W(N)} 
\end{equation}
and so to prove (2) and (3), it suffices to show that 
\[
\lim_{N\to\oo}\frac{ N\cdot \Delta W(N+1)}{W(N)}=0
\qquad\text{and}\qquad\lim_{N\to\oo}\frac{N\cdot \Delta^2 W(N+1)}{\Delta W(N)}=-1.
\]
To this end, we will use the approximation $\log(1+x)\approx x$ as $x\to0$ and apply Theorem~\ref{thm:stolz_cesaro} several times to the limit $\lim_{N\to\oo}\frac{\log W(N)}{\log N} = 0$:
\begin{align}
   0=&\lim_{N\to\oo}\frac{\log W(N+1)}{\log (N+1)} =  \lim_{N\to\oo}\frac{\Delta \log W(N+1)}{\Delta \log (N+1)} = \lim_{N\to\oo}\frac{ \log \frac{W(N+1)}{W(N)}}{\log \frac{N+1}{N}} \\
   =&\lim_{N\to\oo}\frac{ \log (1+\frac{\Delta W(N+1)}{W(N)})}{\log (1+\frac{1}{N})}  = \lim_{N\to\oo}\frac{\frac{ \Delta W(N+1)}{W(N)}}{1/N}= \lim_{N\to\oo}\frac{ N\cdot \Delta W(N+1)}{W(N)}\\
   =&  \lim_{N\to\oo}\frac{ \Delta(N\cdot \Delta W(N+1))}{\Delta W(N)} = \lim_{N\to\oo}\frac{ N\cdot \Delta W(N+1)-(N-1)\Delta W(N)}{\Delta W(N)}\\
   =&\lim_{N\to\oo}\frac{N\cdot \Delta^2 W(N+1)}{\Delta W(N)}+1.\label{eq:smoothish_weights_limit}
\end{align}
This shows that $\lim_{N\to\oo}\frac{ N\cdot \Delta W(N+1)}{W(N)}=0$ and $\lim_{N\to\oo}\frac{N\cdot \Delta^2 W(N+1)}{\Delta W(N)}=-1$. 

Claim (4) follows from the fact that 
\begin{align*}
   \E_{n\leq N-1}^Wg(n)=& \frac{1}{W(N-1)}\sum_{n=1}^{N-1}\Delta W(n)g(n) \\
   =&\frac{W(N)}{W(N-1)}\left(\frac{1}{W(N)}\sum_{n=1}^{N}\Delta W(n)g(n) -\frac{\Delta W(N)}{W(N)}g(N)\right)\\
   =&(1+o_{N\to\oo}(1))\cdot\left(\E_{n\leq N}^Wg(n)+o_{N\to\oo}(1) \right) = \E_{n\leq N}^Wg(n)+o_{N\to\oo}(1).
\end{align*}
This completes the proof.
\end{proof}

By applying $\E_{n\leq N}^W$ to both sides of equation (\ref{eq_main_cesaro_scale}) and invoking Lemma \ref{lem:boshernitzan}, we obtain equation (\ref{eq_main_log_scale}) as an immediate corollary:

\begin{corollary}
\label{cor_formula1.9}
  Let $W\in\mathscr{W}^*$, $L\in\mathscr{L}$, and assume $\vartheta\colon \N\to\N$ satisfies \eqref{gaussian_condition}. If $\lim_{N\to\oo}\frac{\log(W(N))}{\log(N)}=0$ then uniformly over all $f\colon \N\to\C$ with $\|f\|_\infty\leq 1$,
\begin{equation*}
\E_{n\leq N}^W f(\vartheta(n)) =\E_{n\leq N}^W\E_{k\leq L(N)}^{2\bin}f(k)+o_{N\to\oo}(1).
\end{equation*}
\end{corollary}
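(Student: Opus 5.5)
The plan is to combine \cref{thm:binomial_weights} with \cref{lem:boshernitzan}, applied to the bounded function $n\mapsto f(\vartheta(n))$, and then to check that the $o(1)$ error from the Cesàro scale survives $W$-averaging uniformly in $f$. Throughout, I read the inner average on the right-hand side as $\E^{2\bin}_{k\leq L(n)}f(k)$, with $L$ evaluated at the running variable $n$, as in \eqref{eq_main_log_scale}. This is what the derivation produces.

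First step. Fix $f$ with $\|f\|_\infty\leq 1$ and put $F(n)=f(\vartheta(n))$, so that $\|F\|_\infty\leq 1$. Since $W\in\mathscr{W}^*$ and $\log W(N)/\log N\to 0$, \cref{lem:boshernitzan} gives
\[
\E^W_{n\leq N} f(\vartheta(n)) = \E^W_{n\leq N}\E_{k\leq n} f(\vartheta(k)) + o_{N\to\infty}(1).
\]
The error here is uniform over all such $F$, and hence uniform over all such $f$.

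Second step. By \cref{thm:binomial_weights}, there is a sequence $\varepsilon(n)\to 0$, independent of $f$, such that
\[
\Bigl|\E_{k\leq n} f(\vartheta(k)) - \E^{2\bin}_{k\leq L(n)} f(k)\Bigr|\leq \varepsilon(n)
\]
for all $n$ and all $\|f\|_\infty\leq 1$. For instance, take $\varepsilon(n)$ to be the supremum over such $f$ of the left-hand side. Substituting this into the first step leaves the error term
\[
\Bigl|\E^W_{n\leq N}\bigl(\E_{k\leq n} f(\vartheta(k)) - \E^{2\bin}_{k\leq L(n)} f(k)\bigr)\Bigr| \leq \frac{1}{W(N)}\sum_{n=1}^N |\Delta W(n)|\,\varepsilon(n).
\]

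Third step. Show that the right-hand side above tends to $0$. Since $W$ is only eventually non-decreasing, $\Delta W(n)$ may be negative for finitely many $n\leq n_0$. Those terms contribute at most $C/W(N)$, which tends to $0$ because $W(N)\to\infty$. The remaining terms form, up to this same vanishing error, the weighted average $\E^W_{n\leq N}\varepsilon(n)$, which tends to $0$ by \cref{lem:regularity}. The resulting bound does not depend on $f$, so the final identity holds uniformly over $\|f\|_\infty\leq 1$.

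The only delicate point is uniformity. \cref{lem:regularity} is a pointwise statement for a single sequence, so it must be applied to the $f$-independent majorant $\varepsilon(n)$, not to the $f$-dependent difference. This is also why \cref{lem:boshernitzan} and \cref{thm:binomial_weights} are both needed in their uniform forms. Everything else is bookkeeping.
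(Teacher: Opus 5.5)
Your proposal is correct and follows the paper's own argument. The paper obtains the corollary by applying $\E^W_{n\le N}$ to both sides of \eqref{eq_main_cesaro_scale} and invoking \cref{lem:boshernitzan}, exactly as you do. Your $f$-independent majorant $\varepsilon(n)$, combined with \cref{lem:regularity}, simply makes explicit the uniformity that the paper leaves implicit. Your reading of the inner average as $\E^{2\bin}_{k\le L(n)}$ is also the right one: it matches \eqref{eq_main_log_scale} and the way the corollary is used in the proof of \cref{thm_1.2.3}.
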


\section{Proof of formula \eqref{eq_main_loglog_scale}}
\label{sec_4}

The goal of this section is to prove formula \eqref{eq_main_loglog_scale}, which is the third and final part of \cref{thm_main}.
Let us state this result as a standalone theorem.

\begin{theorem}
\label{thm_1.2.3}
Let $W\in\mathscr{W}^*$, $L\in \mathscr{L}$, and assume $\vartheta\colon \N\to\N$ satisfies \eqref{gaussian_condition}.
If $\lim_{N\to\oo}\frac{\log (W\circ L)(N)}{\log(N)}=0$ then uniformly over all $f\colon \N\to\C$ with $\|f\|_\infty\leq 1$,
\begin{equation*}
\E_{n\leq N}^{W\circ L} f(\vartheta(n))  = \E^{W}_{n\leq L(N)}f(n)+o_{N\to\oo}(1).
\end{equation*}
\end{theorem}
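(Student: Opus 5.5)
Set $V = W\circ L$. Since $L\in\mathscr{L}$ is eventually non-decreasing with increments in $\{0,1\}$ and $L(N)\to\infty$, the function $V$ lies in $\mathscr{W}$. Moreover, $\Delta V(n) = \Delta W(L(n))$ when $\Delta L(n)=1$ and $\Delta V(n)=0$ otherwise. The strategy is to combine the \Cesaro-scale identity of \cref{thm:binomial_weights} with a Boshernitzan-type smoothing step at the $V$-scale, and then to re-index by the values of $L$.

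\textbf{Step 1.} Apply Lemma~\ref{lem:boshernitzan}, or rather its proof, to the weight $V$. The hypothesis $\log V(N)/\log N\to 0$ is exactly the one assumed in the theorem. However, $V$ need not lie in $\mathscr{W}^*$, since $\Delta V$ vanishes on many $n$. So I will rerun the summation-by-parts argument directly. The ratio $N\Delta V(N)/V(N)$ still tends to $0$, and the telescoping argument only needs $V(N-1)/V(N)\to1$ together with $\sum_{n<N}F(n)\Delta^2V(n+1)$ written as $-\sum_n \Delta V(n)\,\bigl(F(n)-F(n-1)\cdot\text{correction}\bigr)$. Where this breaks, I would fall back on the cruder observation that $\E^V$-averages of $\E_{k\le n}a(k)$ and of $a(n)$ agree whenever $V$ grows slower than any power. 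This yields
\[
\E^{V}_{n\le N} f(\vartheta(n)) = \E^{V}_{n\le N}\,\E_{m\le n} f(\vartheta(m)) + o(1).
\]
By \cref{thm:binomial_weights}, the right-hand side equals $\E^{V}_{n\le N}\E^{2\bin}_{k\le L(n)}f(k)+o(1)$.

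\textbf{Step 2 (re-indexing).} Only the $n$ at which $L$ jumps carry weight, and at such an $n$ we have $L(n)=M$ for each value $M$ in turn. Hence
\[
\E^{V}_{n\le N}\E^{2\bin}_{k\le L(n)}f(k)=\E^{W}_{M\le L(N)}\E^{2\bin}_{k\le M}f(k)+o(1).
\]
The task is thereby reduced to showing
\[
\E^{W}_{M\le K}\E^{2\bin}_{k\le M}f(k)=\E^W_{M\le K}f(M)+o_{K\to\infty}(1).
\]

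\textbf{Step 3 (main obstacle).} For this identity I would use the announced \cref{thm:BC_of_C}, namely $\E_{n\le N}\E^{\bin}_{k\le n}f(k)=\E_{n\le \lfloor N/2\rfloor}f(n)+o(1)$, adapted to the parity-neutral mean. Since $\E^{2\bin}_{k\le M}f(k)=\E^{\bin}_{j\le M}\tfrac{f(2j)+f(2j+1)}{2}$, this gives
\[
\E_{M\le K}\E^{2\bin}_{k\le M}f(k)=\E_{j\le K/2}\tfrac{f(2j)+f(2j+1)}{2}+o(1)=\E_{n\le K}f(n)+o(1).
\]
Next, apply $\E^W_{K\le\cdot}$ to both sides and invoke Lemma~\ref{lem:boshernitzan} for $W$ twice, once on each side, to remove the inner \Cesaro{} average. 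This requires $\log W(K)/\log K\to0$. We only know $\log W(K)/K\to 0$, so instead I would prove directly that binomial smoothing $\E^{2\bin}_{k\le M}$, which is concentrated on a window of width $\sqrt M$ around $M$, commutes with $\E^W$ up to $o(1)$. The condition $\log W(K)/K\to0$, together with the $\mathscr{W}^*$ regularity, gives $\Delta W(M+t)/\Delta W(M)\to1$ uniformly for $|t|\le M^{2/3}$, by Stolz--Ces\`aro as in the proof of Lemma~\ref{lem:boshernitzan}. Swapping the sums then shows that the weight each $f(k)$ receives is $\Delta W(k)(1+o(1))/W(K)$, apart from boundary terms of size $O(\sqrt{K}\,\Delta W(K)/W(K))$. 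Controlling these boundary terms is the delicate point. I expect this step, verifying the uniform local constancy of $\Delta W$ on $\sqrt M$-windows from the hypotheses, to be the main difficulty.
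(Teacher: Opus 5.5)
Your three-step outline is the paper's own. It applies \cref{cor_formula1.9} with the weight $V=W\circ L$. It then re-indexes by the values of $L$; your Step~2 is \cref{cor:change_of_variables_no_s_hat}. Finally it removes the $\E^{2\bin}$ smoothing at scale $W$ via \cref{lem:boshernitzan}, \cref{cor:C_of_2BC}, and \cref{lem:boshernitzan} again. You correctly noticed that the first and last steps are not covered by the cited lemmas. First, $V\notin\mathscr{W}^*$. Second, \cref{lem:boshernitzan} applied to $W$ needs $\log W(K)/\log K\to0$, which fails already for $W(N)=N$; in that case \cref{cor:C_of_2BC} alone gives Step~3. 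The paper's proof passes over both points. However, neither of your repairs works, and neither can work as the statement stands.

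\textbf{Step 1.} Your own Step~2 observation is what breaks this step: eventually $\Delta V$ vanishes off the jump set $J=\{n:\Delta L(n)=1\}$. Take $W(N)=N$ and $L=\lfloor\log\log\rfloor$, so that $J$ is essentially $\{\lceil e^{e^M}\rceil\}$. At the jump $n_M$ where $L$ reaches $M$, the ratio $N\Delta V(N)/V(N)$ equals $n_M\,\Delta W(M)/W(M)\approx e^{e^M}/M$, so it does not tend to $0$. Your fallback also fails: for $a=1_J$ one has $\E^V_{n\le N}a(n)\to1$ while $\E_{k\le n}a(k)\to0$. This is not a defect of the method, because $\E^{W\circ L}_{n\le N}f(\vartheta(n))$ only sees $\vartheta$ on the sparse set $J$. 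Take $\vartheta=\Omega$ and redefine $\vartheta:=1$ on $J$. This changes the sum in \eqref{gaussian_condition} by at most $2|J\cap[1,N]|/N\to0$, yet the left side of the theorem tends to $f(1)$. For $f=1_{\{1\}}$ the right side is $1/L(N)\to0$. So Step~1 can only be carried out after replacing $W\circ L$ by a regular weight, for instance $W\circ\tilde L$ with $\tilde L$ Hardy and $L=\lfloor\tilde L\rfloor$. This is presumably what $\E^{\log\log}$ in \eqref{eqn_double-log-averages_for_Omega} means, and for such a weight \cref{lem:boshernitzan} applies directly.

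\textbf{Step 3.} The uniform local constancy of $\Delta W$ does not follow from $\log W(K)/K\to0$ together with \eqref{eqn_smoothish_weights}. Consider the Hardy function $W(N)=e^{N^{0.9}}$, which also satisfies $\log(W\circ L)(N)/\log N\to0$ for $L=\lfloor\log\log\rfloor$. Then $\Delta W(M+t)/\Delta W(M)\approx e^{0.9\,tM^{-0.1}}$, which is unbounded already at $t=\sqrt M$. Your boundary term $\sqrt K\,\Delta W(K)/W(K)\approx 0.9K^{0.4}$ diverges as well.

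In fact the identity you reduced to is false for this $W$. Take $f=1_{[K-CK^{0.1},K]}$. Then $\E^W_{M\le K}f(M)\approx 1-e^{-0.9C}$. On the other hand, $\E^{2\bin}_{k\le M}f(k)=\Oh(CK^{0.1}/\sqrt M)$ for every $M$, and the range $M\le K/2$ carries negligible $W$-weight. Hence $\E^W_{M\le K}\E^{2\bin}_{k\le M}f(k)=o(1)$.

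What is missing is a stronger hypothesis: $\sqrt K\,\Delta W(K)/W(K)\to0$ and $\sqrt K\,\Delta^2W(K)/\Delta W(K)\to0$. Morally this says $\log W(K)=o(\sqrt K)$, matching the $e^{\sqrt x}$ threshold in \cref{thm:s_implies_w_implies_bin}. Under that hypothesis your swap does go through, on windows of width $C\sqrt M$ rather than $M^{2/3}$. The reason is that $\sum_{M\ge j}2^{-M-1}\binom{M}{j}=1$, with this mass concentrated on $|M-2j|=\Oh(\sqrt j)$. Consequently each $f(k)$ with $k\le K-C\sqrt K$ receives weight $\Delta W(k)(1+o(1))/W(K)$. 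The remaining $k$ carry total weight $\Oh(\sqrt K\,\Delta W(K)/W(K))+o(1)$.
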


We will derive \cref{thm_1.2.3} from \cref{cor_formula1.9}; one of the key components in this derivation is the following theorem.

\begin{theorem}\label{thm:BC_of_C}
Uniformly over all $f\colon \N\rightarrow \C$ with $\norm{f}_{\oo}\leq 1$,
    \begin{equation}
    \label{eqn_BC_of_C_1}
        \E_{n\leq N}\E_{k\leq n}^{\bin}f(k)=\E_{n\leq N}^{\bin}\E_{k\leq n}f(k) = \E_{n\leq \lfloor{N/2\rfloor}}f(n)+o_{N\to\oo}(1).
    \end{equation}
\end{theorem}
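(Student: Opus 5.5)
The plan is to reduce both equalities in \eqref{eqn_BC_of_C_1} to statements about total variation distance between weight sequences, exactly in the spirit of \eqref{eq:changing_weights_2}. Write
\[
\E_{n\leq N}\E_{k\leq n}^{\bin}f(k)=\sum_{k\in\N}\alpha_{k,N}f(k),\qquad \E_{n\leq N}^{\bin}\E_{k\leq n}f(k)=\sum_{k\in\N}\beta_{k,N}f(k),\qquad \E_{n\leq \lfloor N/2\rfloor}f(n)=\sum_{k\in\N}\tau_{k,N}f(k),
\]
where
\[
\alpha_{k,N}=\frac1N\sum_{n=k}^{N}\frac{1}{2^n}\binom{n}{k},\qquad \beta_{k,N}=\frac{1}{2^N}\sum_{n=\max(k,1)}^{N}\binom{N}{n}\frac1n,\qquad \tau_{k,N}=\frac{1_{k\leq \lfloor N/2\rfloor}}{\lfloor N/2\rfloor}.
\]
Since $\|f\|_\infty\leq1$, it suffices to show that $\sum_k|\alpha_{k,N}-\tau_{k,N}|\to0$ and $\sum_k|\beta_{k,N}-\tau_{k,N}|\to0$. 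This automatically gives uniformity in $f$. Replacing $\lfloor N/2\rfloor$ by $N/2$ in $\tau$ costs only $O(1/N)$ in total variation, so we may take the target weight to be $\frac{2}{N}1_{k\le N/2}$.

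For $\alpha$, the key identity is the negative binomial series $\sum_{n\geq k}\binom{n}{k}x^n=x^k/(1-x)^{k+1}$. At $x=1/2$ this gives $\sum_{n\geq k}2^{-n}\binom nk=2$ for every $k$. Hence
\[
\alpha_{k,N}=\frac2N\Bigl(1-\tfrac12\sum_{n>N}2^{-n}\tbinom nk\Bigr)\leq \frac2N.
\]
Interpreting $\frac12\cdot 2^{-n}\binom{n}{k}$ as the probability that the $(k+1)$-st success in fair coin tossing occurs at trial $n+1$, the tail term is $\P(\text{fewer than } k+1 \text{ heads in } N+1 \text{ tosses})$. By Chebyshev or Chernoff bounds, this tail is $o(1)$ uniformly for $k\leq N/2-N^{3/5}$ and $1-o(1)$ uniformly for $k\geq N/2+N^{3/5}$. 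So $|\alpha_{k,N}-\tau_{k,N}|=o(1/N)$ outside a window of length $2N^{3/5}$ around $N/2$. Inside the window both weights are $O(1/N)$, so the window contributes $O(N^{-2/5})$. For $k\geq N/2+N^{3/5}$, summing $\alpha_{k,N}$ equals $\frac1N\sum_{n\le N}\P(\mathrm{Bin}(n,\tfrac12)\geq N/2+N^{3/5})$, which is $o(1)$ since only $n\geq N-O(N^{3/5})$ contribute non-negligibly.

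For $\beta$, I would split the sum over $n$ into $|n-N/2|\leq N^{3/5}$ and its complement. On the central range $\frac1n=\frac2N(1+O(N^{-2/5}))$. The complement has binomial mass $e^{-cN^{1/5}}$, which makes its contribution negligible even after the crude bound $1/n\leq1$. Therefore
\[
\beta_{k,N}=\tfrac2N\,\P\bigl(\mathrm{Bin}(N,\tfrac12)\geq k\bigr)(1+o(1))+O(e^{-cN^{1/5}}),
\]
and the same window argument as before applies. For this one also needs $\sum_k\beta_{k,N}=\sum_n 2^{-N}\binom Nn=1-2^{-N}$ to control the mass for $k>N/2+N^{3/5}$.

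I expect the main obstacle to be bookkeeping rather than a conceptual one. The errors must be uniform in $k$ while being summed over about $N$ values of $k$, so pointwise $o(1/N)$ bounds must genuinely be uniform. The cleanest route is the following: (i) bound each weight by $\frac{2}{N}(1+o(1))$ uniformly; (ii) show the total mass of each weight on $\{k\geq N/2+N^{3/5}\}$ is $o(1)$; (iii) show uniform closeness to $2/N$ on $\{k\leq N/2-N^{3/5}\}$. From (i)–(iii) the $\ell^1$ convergence follows immediately, and the window of width $N^{3/5}$ is negligible.
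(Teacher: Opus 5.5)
Your argument is correct, and it differs from the paper's mainly in how the first expression is handled. The paper never computes the weights of $\E_{n\le N}\E^{\bin}_{k\le n}$. It cites from Boos--Cass that the Ces\`aro and binomial means commute. It then proves only $\E^{\bin}_{n\le N}\E_{k\le n}f(k)=\E_{k\le\lfloor N/2\rfloor}f(k)+o(1)$: it discards the binomial mass outside $|n-N/2|\le N^{2/3}$ and uses \cref{lem:cesaro_along_subsequences} to replace each inner average $\E_{k\le n}f(k)$ by $\E_{k\le\lfloor N/2\rfloor}f(k)$.

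Your treatment of $\beta_{k,N}$ is that same argument written at the level of weights, using $1/n\approx 2/N$ on the central range. Your treatment of $\alpha_{k,N}$ is genuinely different. The exact formula $\alpha_{k,N}=\frac2N\,\P\bigl(\mathrm{Bin}(N+1,\tfrac12)\ge k+1\bigr)$, obtained from the negative binomial series, replaces the cited commutation identity with a self-contained computation. Both parts also give an explicit total-variation rate of $O(N^{-1/5})$.

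This is more than a stylistic gain. With the paper's normalizations (sums starting at $n=1$, Ces\`aro weight $1/N$), the two double averages are not exactly equal: for $N=2$ and $f=1_{\{1\}}$ they are $\frac12$ and $\frac58$. Exact commutation holds only for the $0$-indexed Hausdorff versions with weight $\frac{1}{N+1}$. So the first equality in \eqref{eqn_BC_of_C_1} holds only up to $o(1)$, which is exactly what you prove. The paper's route, by contrast, needs an extra (easy) normalization-comparison step to be complete, though given the cited identity it is shorter.
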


From \cref{thm:BC_of_C}, we obtain the following immediate corollary.

\begin{corollary}\label{cor:C_of_2BC}
    Uniformly over all $f\colon \N\rightarrow \C$ with $\norm{f}_{\oo}\leq 1$,
    \begin{equation}    \label{eqn_BC_of_C_2}
        \E_{n\leq N}\E_{k\leq n}^{2\bin}f(k)= \E_{n\leq N}f(n)+o_{N\to\oo}(1).
    \end{equation}
\end{corollary}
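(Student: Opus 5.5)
We derive \cref{cor:C_of_2BC} directly from \cref{thm:BC_of_C}, applied to an auxiliary function built from $f$. The definition \eqref{def_2bin} gives
\[
\E^{2\bin}_{k\le n} f(k)=\E^{\bin}_{k\le n} g(k),\qquad g(k)=\tfrac{f(2k)+f(2k+1)}{2}.
\]
This $g$ again satisfies $\|g\|_\infty\le 1$, and the passage $f\mapsto g$ is uniform in $f$. Hence the uniform statement of \cref{thm:BC_of_C}, applied to $g$ with $N$ replaced by $2N$, yields
\[
\E_{n\le 2N}\E^{2\bin}_{k\le n} f(k)=\E_{n\le 2N}\E^{\bin}_{k\le n} g(k)=\E_{m\le N} g(m)+o_{N\to\oo}(1).
\]
A direct computation then gives
\[
\E_{m\le N} g(m)=\frac{1}{2N}\sum_{m=1}^N\bigl(f(2m)+f(2m+1)\bigr)=\frac{1}{2N}\sum_{j=2}^{2N+1}f(j)=\E_{n\le 2N}f(n)+\Oh(1/N).
\]
Combining the two displays establishes \eqref{eqn_BC_of_C_2} along even indices $2N$.

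For odd indices $2N+1$ there are two options. The cleanest is to apply \cref{thm:BC_of_C} with $2N+1$ in place of $2N$; since $\lfloor (2N+1)/2\rfloor=N$, the same computation gives $\E_{m\le N}g(m)=\E_{n\le 2N+1}f(n)+\Oh(1/N)$. Alternatively, one can note that both sides of \eqref{eqn_BC_of_C_2} are Cesàro averages of sequences bounded by $1$. Passing from $M$ to $M+1$ therefore changes each side by at most $2/(M+1)$, so the odd case follows from the even case with an $\Oh(1/N)$ error. Either way, the error is uniform over all $\|f\|_\infty\le 1$, which completes the argument.

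The only point that needs care is the index bookkeeping between $\lfloor N/2\rfloor$ in \cref{thm:BC_of_C} and the doubled indices $2k,2k+1$ in $g$. Each such mismatch affects only boundary terms, namely $f(1)$ and $f(2N+1)$, and contributes $\Oh(1/N)$. Apart from this there is no real obstacle: all the substantive analysis is contained in \cref{thm:BC_of_C}.
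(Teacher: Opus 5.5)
Your proof is correct and is essentially the paper's argument: the paper also rewrites $\E^{2\bin}_{k\le n}f(k)$ as a binomial mean of $\frac{f(2k)+f(2k+1)}{2}$ (splitting it by linearity into the pieces $f(2k)$ and $f(2k+1)$), applies \cref{thm:BC_of_C} to get averages over $n\le\lfloor N/2\rfloor$, and reassembles these into $\E_{n\le N}f(n)$. Your even/odd bookkeeping only makes explicit the $\Oh(1/N)$ boundary errors that the paper absorbs silently by working with $\lfloor N/2\rfloor$ for general $N$.
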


\begin{proof}
Using the definition of $\E^{2\bin}$ and invoking \cref{thm:BC_of_C}, we get
\begin{align*}
\E_{n\leq N}\E_{k\leq n}^{2\bin}f(k)
&=
\frac{1}{2}
\E_{n\leq N}\E_{k\leq n}^{\bin}f(2k)
+
\frac{1}{2}
\E_{n\leq N}\E_{k\leq n}^{\bin}f(2k+1)
\\
&=
\frac{1}{2}
\E_{n\leq \lfloor{N/2\rfloor}} f(2n)
+
\frac{1}{2}
\E_{n\leq \lfloor{N/2\rfloor}}f(2n+1)+o_{N\to\oo}(1)
\\
&=
\E_{n\leq N} f(n)
+o_{N\to\oo}(1),
\end{align*}
as desired.
\end{proof}

It remains to prove \cref{thm:BC_of_C}. 
The leftmost equality in \eqref{eqn_BC_of_C_1} follows from the following fact.
\begin{lemma}
For any function $f:\N\rightarrow \C$ and any $N\in \N$ we have
\begin{equation}
    \E_{n\leq N}\E^{\bin}_{k\leq n}f(k) = \E_{n\leq N}^{\bin}\E_{k\leq n}f(k).
\end{equation}
\end{lemma}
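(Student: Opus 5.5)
The identity is a finite one, so I will prove it by comparing the coefficient of each value $f(k)$ on both sides. Fix $N$. Since $\E^{\bin}_{k\leq n}f(k)=2^{-n}\sum_{k=1}^n\binom{n}{k}f(k)$, the left-hand side equals $\sum_{k=1}^N a_k f(k)$ with
\[
a_k=\frac{1}{N}\sum_{n=k}^{N}\binom{n}{k}2^{-n}.
\]
The right-hand side is $2^{-N}\sum_{n=1}^N\binom{N}{n}\frac1n\sum_{k=1}^n f(k)=\sum_{k=1}^N b_k f(k)$ with
\[
b_k=2^{-N}\sum_{n=k}^{N}\frac{1}{n}\binom{N}{n}.
\]
Because $f$ is arbitrary, the lemma is equivalent to the purely combinatorial identity $a_k=b_k$ for all $1\leq k\leq N$. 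Multiplying by $N$, it suffices to show
\[
S_N(k):=\sum_{n=k}^{N}\binom{n}{k}2^{-n} \;=\; T_N(k):=\frac{N}{2^{N}}\sum_{n=k}^{N}\frac{1}{n}\binom{N}{n}.
\]
As a sanity check, for $N=k=1$ both sides equal $\tfrac12$.

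I will prove this by induction on $N$, for each fixed $k$. The base case is $N=k$: there $S_k(k)=2^{-k}$ and $T_k(k)=k2^{-k}\cdot\frac1k=2^{-k}$. For the inductive step it suffices to show that the increments agree, i.e. $T_N(k)-T_{N-1}(k)=S_N(k)-S_{N-1}(k)=\binom{N}{k}2^{-N}$.

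To handle $T_N$, I apply Pascal's rule $\binom{N}{n}=\binom{N-1}{n}+\binom{N-1}{n-1}$ together with the absorption identity $\frac1n\binom{N-1}{n-1}=\frac1N\binom{N}{n}$. This gives the recursion
\[
T_N(k)=\frac{N}{2(N-1)}\,T_{N-1}(k)+2^{-N}\sum_{n=k}^{N}\binom{N}{n}.
\]
Combining it with the induction hypothesis $T_{N-1}(k)=S_{N-1}(k)$ turns the required increment identity into an explicit relation among $S_{N-1}(k)$, the tail binomial sum $\sum_{n\geq k}\binom{N}{n}$, and $\binom Nk$.

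The main obstacle is exactly this bookkeeping: the factor $\frac{N}{N-1}$ does not telescope directly. I plan to remove it by also establishing a closed form for $S_N(k)$ in terms of tail binomial sums. A natural candidate comes from the classical identity $\sum_{j=0}^{m}\binom{m+j}{j}2^{-j}=2^m$ and its truncated variants, which can be proved by the same Pascal-rule induction.

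If that route proves messy, my fallback is generating functions. I would compute $\sum_k x^k S_N(k)=\sum_{n\leq N}\left(\frac{1+x}{2}\right)^n$, which is a geometric sum. On the other side, I would use $\frac1n\binom Nn$ together with $\frac{(1+x)^N-1}{N}=\int_0^x(1+t)^{N-1}\,dt$ to identify $\sum_k x^k T_N(k)$ as the same rational expression.
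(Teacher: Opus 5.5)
There is a genuine gap: the identity $S_N(k)=T_N(k)$ that you reduce to is false, so the induction cannot close. Your base case $N=k$ is the only place where the two sides trivially agree. Already for $N=2$, $k=1$,
\[
S_2(1)=\tfrac12+\tfrac24=1,\qquad T_2(1)=\tfrac24\bigl(2+\tfrac12\bigr)=\tfrac54,
\]
and correspondingly $T_2(1)-T_1(1)=\tfrac34\neq\tfrac12=\binom21 2^{-2}$. The factor $\frac{N}{2(N-1)}$ that you flagged as non-telescoping is therefore not a bookkeeping nuisance but a symptom of this failure.

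Your reduction is a faithful reading of the paper's normalizations, namely $\E_{n\le N}=\frac1N\sum_{n=1}^N$ and $\E^{\bin}_{k\le n}=2^{-n}\sum_{k=1}^n$. Under these normalizations the lemma as stated is simply false. For $f\equiv 1$ the left side equals $1-\frac{1-2^{-N}}{N}$ and the right side equals $1-2^{-N}$, and these differ for every $N\ge 2$; at $N=2$ they are $\frac58$ and $\frac34$. Neither your closed-form route nor the generating-function fallback can rescue a false identity.

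The classical fact behind the paper's citation to Boos is that the Cesàro and Euler (binomial) means commute as Hausdorff matrices. Those matrices are indexed from $0$:
\[
\E_{n\le N}g(n)=\tfrac1{N+1}\textstyle\sum_{n=0}^N g(n),\qquad \E^{\bin}_{k\le n}f(k)=2^{-n}\textstyle\sum_{k=0}^n\binom nk f(k).
\]
The paper itself silently uses the first of these in the proof of Lemma~\ref{lem:cesaro_along_subsequences}. With this convention your coefficient comparison works. Your closed-form idea for $S_N(k)$ does succeed, giving $S_N(k)=2^{-N}\sum_{m=k+1}^{N+1}\binom{N+1}{m}$. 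By the absorption identity $\frac1{n+1}\binom Nn=\frac1{N+1}\binom{N+1}{n+1}$, this equals $(N+1)2^{-N}\sum_{n=k}^N\frac{1}{n+1}\binom Nn$, which is exactly the correctly normalized right-hand coefficient, not your $T_N(k)$. Equivalently, the lemma reduces to
\[
\sum_{n=k}^{N}2^{N-n}\binom nk=\sum_{m=k+1}^{N+1}\binom{N+1}{m}.
\]
Both sides equal $1$ at $N=k$, and both satisfy $X_N=2X_{N-1}+\binom Nk$ (using Pascal's rule on the right), so the identity follows by induction.

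Nothing downstream is lost. Switching between the two conventions changes each side by $\Oh(1/N)$ uniformly over $\|f\|_\infty\le 1$. Theorem~\ref{thm:BC_of_C} is only an $o(1)$ statement, so it is unaffected.
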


The proof of this lemma can be found in \cite{Boos} by combining Proposition 3.4.4(e) with Example 3.4.7 and Definition 3.4.8. It remains to prove the rightmost equality in Theorem \ref{thm:BC_of_C}. Our strategy will be to show that $\E_{n\leq N}^{\bin}\E_{k\leq n}f(k)$ is close to an average of $\E_{k\leq n}f(k)$ for $n$ near $\floor{N/2}$, and that each term of this form is very close to $\E_{k\leq \floor{N/2}}f(k)$. The idea for this proof strategy, albeit phrased slightly differently, can be found in \cite{Gajser16}.

\begin{lemma}\label{lem:cesaro_along_subsequences}
    Let $f:\N\rightarrow \C$ be bounded and $N,M \in \N$.
    Then
    \begin{equation*}
        |\E_{n\leq N}f(n)-\E_{n\leq M}f(n)|\leq \left(\frac{2|N-M|}{\min\{M,N\}+1}\right)\cdot \|{f}\|_{\oo}.
    \end{equation*}
\end{lemma}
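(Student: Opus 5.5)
By symmetry we may assume $M\leq N$; if $M=N$ there is nothing to prove. Writing $F(n)=\sum_{k=1}^n f(k)$, we expand
\[
\E_{n\leq N}f(n)-\E_{n\leq M}f(n)=\frac{F(N)}{N}-\frac{F(M)}{M}
=\frac{F(N)-F(M)}{N}+F(M)\left(\frac{1}{N}-\frac{1}{M}\right).
\]
The first term is bounded by $\frac{(N-M)\|f\|_\oo}{N}$, since $F(N)-F(M)$ is a sum of $N-M$ values of $f$. The second term is bounded by $M\|f\|_\oo\cdot\frac{N-M}{MN}=\frac{(N-M)\|f\|_\oo}{N}$, using $|F(M)|\leq M\|f\|_\oo$. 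Adding the two gives
\[
\left|\E_{n\leq N}f(n)-\E_{n\leq M}f(n)\right|\leq \frac{2(N-M)}{N}\|f\|_\oo .
\]

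It remains to compare with the stated bound, whose denominator is $\min\{M,N\}+1=M+1$. When $M<N$ we have $N\geq M+1$, so $\frac{2(N-M)}{N}\leq\frac{2(N-M)}{M+1}$, which is the claim. The case $M=N$ is trivial since both sides vanish. The only point needing care is this final comparison: the natural estimate has $\max\{M,N\}$ in the denominator, and one must check that it is at least $\min\{M,N\}+1$ in every nontrivial case. The remaining steps are just the triangle inequality and the trivial bound $|F(n)|\leq n\|f\|_\oo$.
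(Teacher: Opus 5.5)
Your proof is correct and uses the same decomposition as the paper: split the difference into the tail $\frac{1}{N}\sum_{M<n\le N}f(n)$ and the renormalization term $F(M)\bigl(\frac1N-\frac1M\bigr)$, and bound each by $\frac{N-M}{N}\|f\|_\oo$. The only difference is cosmetic. The paper's computation indexes the averages from $n=0$, so $N+1$ appears in the denominator and trivially dominates $M+1$. You instead use the stated convention $\frac1N\sum_{n=1}^N$ and correctly supply the extra comparison $N\ge M+1$.
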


\begin{proof}
    
Without loss of generality, assume that $N\geq M$. 
    \begin{align*}
        |\E_{n\leq N}f(n)-\E_{n\leq M}f(n)| = &\left|\frac{1}{N+1}\sum_{n=0}^{N}f(n)-\frac{1}{M+1}\sum_{n=0}^{M}f(n)\right|\\
        =& \left|\sum_{n=0}^{M}\left(\frac{f(n)}{N+1}-\frac{f(n)}{M+1}\right)+\frac{1}{N+1}\sum_{n=M+1}^{N}f(n)\right|\\
        =& \left|\frac{M-N}{N+1}\cdot \frac{1}{M+1}\sum_{n=0}^{M}f(n)+\frac{1}{N+1}\sum_{n=M+1}^{N}f(n)\right|\\
        \leq&  \frac{|M-N|}{N+1}\cdot \frac{1}{M+1}\sum_{n=0}^{M}|f(n)|+\frac{1}{N+1}\sum_{n=M+1}^{N}|f(n)|
        \\=&  \frac{N-M}{N+1}\cdot \E_{n\leq M}|f(n)|+\frac{1}{N+1}\sum_{n=M+1}^{N}|f(n)|
        \\\leq &  \frac{N-M}{N+1}\cdot \|f\|_{\oo}+\frac{N-M}{N+1}\|f\|_{\oo}\\
        =&  \frac{2(N-M)}{N+1}\|f\|_{\oo}.
    \end{align*}

\end{proof}

\begin{corollary}
Let $(A_N)_{N\in \N}$ and $(B_N)_{N\in \N}$ be positive integer-valued sequences with $\lim_{N\to\oo}B_N = \oo$ and  $\lim_{N\to\oo}\frac{A_N}{B_N}= 1$. Then uniformly over all $f:\N\rightarrow \C$ with $\norm{f}_{\oo}\leq 1$, 
\[
\E_{n\leq A_N}f(n) = \E_{n\leq B_N}f(n)+o_{N\to\oo}(1).
\]
\end{corollary}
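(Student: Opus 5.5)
The corollary follows by applying \cref{lem:cesaro_along_subsequences} with $N$ replaced by $A_N$ and $M$ replaced by $B_N$. For every $f$ with $\norm{f}_\oo\leq 1$ the lemma gives
\[
\left|\E_{n\leq A_N}f(n)-\E_{n\leq B_N}f(n)\right|\leq \frac{2|A_N-B_N|}{\min\{A_N,B_N\}+1}.
\]
The right-hand side does not depend on $f$. So uniformity over the unit ball of $\ell^\oo$ is automatic, and it remains only to show that this bound tends to $0$ as $N\to\oo$.

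For this, note first that $A_N/B_N\to 1$ and $B_N\to\oo$ together force $A_N\to\oo$. Hence $\min\{A_N,B_N\}\geq \tfrac12 B_N$ for all sufficiently large $N$, since eventually $A_N\geq \tfrac12 B_N$. For such $N$ we get
\[
\frac{2|A_N-B_N|}{\min\{A_N,B_N\}+1}\leq \frac{2|A_N-B_N|}{\tfrac12 B_N}=4\left|\frac{A_N}{B_N}-1\right|,
\]
and this tends to $0$ by hypothesis. That completes the argument.

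There is no real obstacle here; the estimate is routine. The only point needing care is replacing $\min\{A_N,B_N\}$ by a constant multiple of $B_N$, which uses the ratio hypothesis a second time.
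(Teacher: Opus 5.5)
Your proof is correct and is exactly the intended argument: the paper gives no separate proof and presents the corollary as an immediate consequence of \cref{lem:cesaro_along_subsequences}, and your $f$-independent bound $4\left|\frac{A_N}{B_N}-1\right|$ supplies the omitted verification. One small remark: your estimate never uses $B_N\to\infty$, so the observation that $A_N\to\infty$ is harmless but unnecessary.
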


Now for the proof of Theorem~\ref{thm:BC_of_C}.

\begin{proof}[Proof of Theorem~\ref{thm:BC_of_C}]
Let $X_1,\dots, X_N$ be i.i.d. random variables with $\P(X_1=0)=\P(X_i=1)=1/2$, so that $\sum_{i=1}^NX_i\sim \text{Bin}(N,1/2)$. The central limit theorem states that the sequence $\frac{\sum_{i=1}^NX_i}{\sqrt{N}}$ converges in distribution to $\mathcal{N}(0,1)$ as $N\to\oo$. So for any $A<B\in \R$, 
\[
\lim_{N\to\oo}\P\left(A<\frac{\sum_{i=1}^NX_i}{\sqrt{N}}<B\right) = \P(A<\mathcal{N}(0,1)<B).
\]
But 
\[
\lim_{N\to\oo}\P\left(A<\frac{\sum_{i=1}^NX_i}{\sqrt{N}}<B\right) =\P\left(A\sqrt{N}<{\sum_{i=1}^NX_i}<B\sqrt{N}\right)=\frac{1}{2^N}\sum_{n=N/2+A\sqrt{N}}^{N/2+B\sqrt{N}}\binom{N}{n}.
\]
Taking $A\to-\oo$ and $B\to\oo$ we have $\P(A<\mathcal{N}(0,1)<B)\to 1$. It follows that whenever $D$ is a function which tends to $\oo$ faster than $\sqrt{N}$, we have $\frac{1}{2^N}\sum_{n=N/2-D(N)}^{N/2+D(N)}\binom{N}{n}\to 1$ as $N\to\oo$.

For each $N\in \N$, put $I_{N} = [\floor{N/2-N^{2/3}},\floor{N/2+N^{2/3}}]$. Then $\lim_{N\to\oo}2^{-N}\cdot\sum_{n\in I_N}\binom{N}{n}=1$,
and so
    \begin{align*} 
    \left|\E_{n\leq N}^{\bin}\E_{k\leq n}f(k)-\frac{1}{2^N}\sum_{n\in I_N}\binom{N}{n}\E_{k\leq n}f(k)\right| < \|f\|_{\oo}\cdot o_{N\to\oo}(1).
    \end{align*}

Now we will apply Lemma \ref{lem:cesaro_along_subsequences}. For any $n\in I_N$ we have that 
\begin{align*}
    |\E_{k\leq \lfloor{N/2\rfloor}}f(k)-\E_{k\leq n}f(k)| \leq &\left(\frac{2|\floor{N/2}-n|+1}{\min\{\floor{N/2},n\}+1}\right)\cdot \|{f}\|_{\oo}\\
    \leq &\left(\frac{2\lceil{N^{2/3}\rceil}+1}{\lfloor{N/2 - N^{2/3}\rfloor}+1}\right)\cdot \|{f}\|_{\oo}\\
    \leq& \left(\frac{2}{\lfloor{N^{1/3}/2-1\rfloor}}+o_{N\to\oo}(1)\right)\cdot \|{f}\|_{\oo} = o_{N\to\oo}(1). 
\end{align*}
    So $|\E_{k\leq \lfloor{N/2\rfloor}}f(k)-\E_{k\leq n}f(k)| $ goes to $0$ as $N\to\oo$ uniformly for $n\in I_N$. Hence,
    \begin{align*}
        \frac{1}{2^N}\sum_{n\in I_N}\binom{N}{n}\E_{k\leq n}f(k) =& \frac{1}{2^N}\sum_{n\in I_N}\binom{N}{n}\E_{k\leq \floor{N/2}}f(k)+o_{N\to\oo}(1)\\
        =& \E_{k\leq \floor{N/2}}f(k)\cdot \left(\frac{1}{2^N}\sum_{n\in I_N}\binom{N}{n}\right)+o_{N\to\oo}(1).
    \end{align*}
In total we have that
\begin{align*}
    \E_{n\leq N}^{\bin}\E_{k\leq n}f(k)
    =&\frac{1}{2^N}\sum_{n\in I_N}\binom{N}{n}\E_{k\leq n}f(k)+o_{N\to\oo}(1)\\ =& \E_{k\leq \floor{N/2}}f(k)\cdot \left(1+o_{N\to\oo}(1)\right)+o_{N\to\oo}(1)\\
    =&\E_{k\leq \floor{N/2}}f(k)+o_{N\to\oo}(1),
\end{align*}
which concludes the proof.
\end{proof}

The final ingredient in the proof of \cref{thm_1.2.3}
is a discrete counterpart of the change-of-variables formula for integrals. We introduce this identity next and include a proof for completeness.
Recall the standard formula, which states that  
\begin{equation}
\label{eqn_int_changeofvariable_formula}
\int_{1}^{N} (W\circ s)'(x)\, f(s(x))\, dx
    \;=\;
\int_{s(1)}^{s(N)} W'(y)\, f(y)\, dy.
\end{equation}
The following proposition is a discrete variant of \eqref{eqn_int_changeofvariable_formula}.

\begin{proposition}
\label{cor:change_of_variables_no_s_hat}
   Let $W\in \mathscr{W}$, $s\in\mathscr{L}$, and suppose that $\lim_{N\to\oo}\frac{\Delta W(N)}{W(N)}=0$.
Then uniformly over all $f:\N\rightarrow \C$ with $\norm{f}_{\oo}\leq 1$,
   \begin{equation*}
    \E^{W\circ s}_{n\leq N}(f(s(n)))= \E_{k\leq s(N)}^{W}(f(k))+ o_{N\to\oo}(1).
    \end{equation*}
\end{proposition}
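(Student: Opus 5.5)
The plan is to group the terms of the left-hand side according to the value of $s(n)$. Since $s\in\mathscr{L}$, there is $n_0$ such that $\Delta s(n)\in\{0,1\}$ for all $n> n_0$. Hence $s$ is eventually non-decreasing, takes every integer value between $s(n_0)$ and $s(N)$, and increases by at most one at each step.

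For $n>n_0$, the increment $\Delta(W\circ s)(n)=W(s(n))-W(s(n-1))$ is zero if $\Delta s(n)=0$. If $\Delta s(n)=1$, it equals $\Delta W(s(n))$. Moreover, for each $k$ with $s(n_0)<k\le s(N)$ there is exactly one $n\in(n_0,N]$ with $s(n)=k$ and $\Delta s(n)=1$, namely the first time $s$ reaches $k$. Therefore
\[
\sum_{n=n_0+1}^N \Delta(W\circ s)(n)\,f(s(n))=\sum_{k=s(n_0)+1}^{s(N)}\Delta W(k)\,f(k).
\]
This is the exact discrete change of variables.

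Dividing by $(W\circ s)(N)=W(s(N))$, the left-hand side $\E^{W\circ s}_{n\le N}f(s(n))$ differs from the displayed sum divided by $W(s(N))$ by
\[
\frac{1}{W(s(N))}\sum_{n\le n_0}\bigl|\Delta(W\circ s)(n)\bigr|,
\]
which is a constant over $W(s(N))$. Likewise, $\E^W_{k\le s(N)}f(k)$ differs from the same sum divided by $W(s(N))$ by at most
\[
\frac{1}{W(s(N))}\sum_{k\le s(n_0)}|\Delta W(k)|,
\]
again a constant over $W(s(N))$. Both error bounds are independent of $f$ (using $\|f\|_\infty\le 1$).

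It remains to check that $W(s(N))\to\infty$. This holds because $s(N)\to\infty$ (as $s\in\mathscr{W}$) and $W(M)\to\infty$. So both errors are $o_{N\to\oo}(1)$ uniformly in $f$, which gives the claim.

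There are two points of care. The first is indexing at $n=1$, where $\Delta$ is defined as the value itself; these finitely many terms are absorbed into the constants above. The second is that $W$ is only eventually non-decreasing, so absolute values are needed in the error bounds. The hypothesis $\Delta W(N)/W(N)\to 0$ seems unnecessary for this argument. If an off-by-one in the alignment forced a comparison with $\E^W_{k\le s(N)\pm1}$, I would absorb it using that hypothesis, in the manner of claim (4) in the proof of \cref{lem:boshernitzan}. The main obstacle is simply the bookkeeping of the bijection between the jump times of $s$ and the integers in $(s(n_0),s(N)]$.
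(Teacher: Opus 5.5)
Your argument is correct, and it is more direct than the paper's. The paper does not prove the proposition on its own. It proves the more general \cref{lem:change_of_variables}, where the weight on the $n$-side is an arbitrary $W$ (not necessarily constant on the level sets of $s$), and then substitutes $W\circ s$ for $W$, using $s\circ\hat{s}=\mathrm{id}$. In that lemma the regrouping identity is exact only when $N$ is the last point of a level block of $s$, i.e.\ when $N$ lies in the image of $\hat{s}$. For other $N$ a partial-block error $\frac{1}{W(N)}\sum_{n=N+1}^{\hat{s}(s(N))}\Delta W(n)f(s(n))$ appears, and the hypothesis $\Delta(W\circ\hat{s})/(W\circ\hat{s})\to 0$ is what controls it; after the substitution this hypothesis becomes $\Delta W(N)/W(N)\to 0$. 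You instead use directly that $W\circ s$ is constant on the level sets of $s$. The partial-block error is then identically zero, and only a fixed boundary contribution of size $\Oh(1/W(s(N)))$ remains. So your remark that the hypothesis $\lim_{N\to\oo}\Delta W(N)/W(N)=0$ is superfluous for this proposition is right: the off-by-one contingency you anticipate never arises, and the check of this hypothesis in the proof of \cref{thm_1.2.3} is not actually needed. Your bookkeeping with $n_0$ also handles the ``eventually'' in the definition of $\mathscr{L}$ more carefully than the paper, which tacitly uses $\Delta s\in\{0,1\}$ and $s(\hat{s}(k))=k$ for all $k$. What the paper's route buys is the general lemma itself. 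It is needed in \cref{sec_5} with $W(n)=e^{\sqrt{n}}$ and $s=\lfloor r\rfloor$, where the weight does not factor through $s$ and the partial-block error is genuinely present.
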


For the proof of \cref{cor:change_of_variables_no_s_hat}, we use the next lemma.

\begin{lemma}\label{lem:change_of_variables}
   Let $s\in \mathscr{L}$ and define $\hat{s}(k)= \max\{n:s(n)\leq k\}$. Let $W\in \mathscr{W}$ and suppose that $\lim_{N\to\oo}\frac{\Delta(W\circ \hat{s})(N)}{(W\circ \hat{s})(N)}=0$. Then uniformly over all $f:\N\rightarrow \C$ with $\norm{f}_{\oo}\leq 1$,
   \begin{equation*}
    \E^W_{n\leq N}(f(s(n)))= \E_{k\leq s(N)}^{W\circ \hat{s}}(f(k))+ o_{N\to\oo}(1).
    \end{equation*}
\end{lemma}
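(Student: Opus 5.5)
Since $s\in\mathscr{L}$, there is $n_0$ with $\Delta s(n)\in\{0,1\}$ for all $n> n_0$, and $s$ is eventually non-decreasing and unbounded. The plan is to group the sum $\sum_{n\le N}\Delta W(n)f(s(n))$ by the value $k=s(n)$. For $k$ large, the set $\{n: s(n)=k\}$ is a block of consecutive integers $(\hat s(k-1),\hat s(k)]$, because $s$ takes every sufficiently large value and is non-decreasing with unit jumps. Summing $\Delta W(n)$ over this block telescopes to $W(\hat s(k))-W(\hat s(k-1))=\Delta(W\circ\hat s)(k)$. Hence for $N$ large,
\[
\sum_{n=1}^N \Delta W(n) f(s(n)) = C_0 + \sum_{k=k_0}^{s(N)-1}\Delta(W\circ\hat s)(k)\, f(k) + \sum_{\hat s(s(N)-1)<n\le N}\Delta W(n) f(s(N)),
\]
where $C_0$ is a bounded contribution from the finitely many initial indices $n\le n_0$ and values $k<k_0$ (those on which $s$ is not yet monotone with unit jumps). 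The last sum is the incomplete final block.

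The next step is to bound the three error sources, each divided by $W(N)$, using $\|f\|_\infty\le1$. The term $C_0/W(N)\to0$ since $W(N)\to\infty$. For the incomplete block, the sum of $\Delta W(n)$ over it is at most $W(\hat s(s(N)))-W(\hat s(s(N)-1))=\Delta(W\circ\hat s)(s(N))$, using monotonicity of $W$ for large arguments. We also need to compare the normalizations $W(N)$ and $(W\circ\hat s)(s(N))$. Since $\hat s(s(N)-1)<N\le\hat s(s(N))$, we get
\[
(W\circ\hat s)(s(N)-1)\le W(N)\le (W\circ\hat s)(s(N)).
\]
The hypothesis $\Delta(W\circ\hat s)(K)/(W\circ\hat s)(K)\to0$ applied at $K=s(N)\to\infty$ gives $W(N)/(W\circ\hat s)(s(N))\to1$. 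It also gives that the incomplete block, divided by $W(N)$, is $o(1)$.

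Finally, we reassemble. The main sum divided by $W(N)$ equals
\[
\frac{(W\circ\hat s)(s(N))}{W(N)}\,\E^{W\circ\hat s}_{k\le s(N)}f(k)
\]
minus the $k=s(N)$ term and the finitely many $k<k_0$ terms, each of which is $o(1)$ after normalization, for the same reasons as above. The leading ratio is $1+o(1)$ and the average is bounded by $1$, so the claim follows, uniformly in $f$ because every error bound involves only $\|f\|_\infty$.

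The main obstacle is the bookkeeping around the definition $\hat s(k)=\max\{n:s(n)\le k\}$. We need this to be finite, which holds since $s\to\infty$ and $s$ is eventually non-decreasing. We also need to check that for large $k$ the level sets of $s$ are exactly the intervals $(\hat s(k-1),\hat s(k)]$, which requires that $s$ has no values repeated after dips in the initial non-monotone segment. The clean way is to pick $n_1$ beyond which $s$ is non-decreasing with steps in $\{0,1\}$ and $s(n)>\max_{m\le n_1}s(m)$ for the relevant range, and then take $k_0$ larger than all values $s(m)$ with $m\le n_1$.
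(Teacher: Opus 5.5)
Your proposal is correct and follows essentially the same route as the paper: group $\sum_{n\le N}\Delta W(n)f(s(n))$ by the level sets $\{\hat s(k-1)+1,\dots,\hat s(k)\}$ of $s$, telescope each block to $\Delta(W\circ\hat s)(k)$, and control the one incomplete final block and the comparison of $W(N)$ with $(W\circ\hat s)(s(N))$ via the hypothesis $\Delta(W\circ\hat s)/(W\circ\hat s)\to 0$. Your sandwich $(W\circ\hat s)(s(N)-1)\le W(N)\le (W\circ\hat s)(s(N))$ is the right justification for that comparison; the paper instead bounds $\Delta(W\circ\hat s)(s(N))/W(N)$ by $\Delta(W\circ\hat s)(s(N))/W(\hat s(s(N)))$, which runs in the wrong direction since $W(N)\le W(\hat s(s(N)))$. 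Your explicit handling of the finitely many initial indices where $s$ is not yet monotone with unit steps also fills in a point the paper passes over.
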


The statement of \cref{cor:change_of_variables_no_s_hat} follows simply by rephrasing Lemma~\ref{lem:change_of_variables} to remove any reference to $\hat{s}$ by replacing $W$ with $W\circ s$.   
It remains to prove Lemma~\ref{lem:change_of_variables}.

\begin{proof}[Proof of Lemma~\ref{lem:change_of_variables}]
First we will note that $\hat{s}$ is a right inverse for $s$, so that $s(\hat{s}(k))=k$ for all $k\in \N$. When $N\in \N$ is equal to $\hat{s}(M)$ for some $M\in \N$, we have that $\hat{s}(s(N)) = \hat{s}(s(\hat{s}(M))) = \hat{s}(M) = N$. So for $N$ contained in the image of $\hat{s}$, we can calculate that
\begin{align*}
       \E^W_{n\leq N}(f(s(n)))=& \frac{1}{W(N)}\sum_{n=1}^N\Delta W(n) f(s(n)) =\frac{1}{W(N)}\sum_{k=1}^{s(N)}\sum_{n\leq N: s(n)=k}\Delta W(n)f(k).
\end{align*}
But for each $k\leq s(N)$, $\{n\leq N:s(n)=k\}$ is the interval $\{ \hat{s}(k-1)+1,\dots, \hat{s}(k)\}$, so
    \[
    \sum_{n\leq N: s(n)=k}\Delta W(n) = \sum_{n=\hat{s}(k-1)+1}^{\hat{s}(k)}\Delta W(n)=W(\hat{s}(k))-W(\hat{s}(k-1)) = \Delta (W\circ \hat{s})(k).
    \]
    Additionally, writing $W(N) = W(\hat{s}(s(N)) = (W\circ \hat{s})(s(N))$, we have
    \[
   \frac{1}{W(N)}\sum_{k=1}^{s(N)}\sum_{n\leq N: s(n)=k}\Delta W(n)f(k) = \frac{1}{(W\circ \hat{s})(s(N))}\sum_{k=1}^{s(N)}\Delta (W\circ \hat{s})(k)\cdot f(k) = \E_{k\leq s(N)}^{W\circ \hat{s}}f(k).
    \]
    It total, we have shown that $\E^W_{n\leq N}(f(s(n))) = \E_{k\leq s(N)}^{W\circ \hat{s}}f(k)$ whenever $N$ belongs to the image of $\hat{s}$.
    
Now suppose that $N$ does not belong to the image of $\hat{s}$, so that we have  $\hat{s}(s(N)-1)<N<\hat{s}(s(N))$. Then $\{n\leq N:s(n)=s(N)\}$ is the interval $\{ \hat{s}(s(N)-1)+1,\dots, N\}$, so
    \[
    \sum_{n\leq N: s(n)=s(N)}\Delta W(n) = \sum_{n=\hat{s}(s(N)-1)+1}^{N}\Delta W(n)=W(N)-W(\hat{s}(s(N)-1)) 
    \]
By the argument from the first half of the proof, we have
\begin{align*}
&\E_{n\leq N}^Wf(s(n))=\frac{1}{W(N)}\sum_{n=1}^{\hat{s}(s(N))}\Delta W(n)f(s(n))-\frac{1}{W(N)}\sum_{n=N+1}^{\hat{s}(s(N))}\Delta W(n)f(s(n))\\
=& \frac{(W\circ \hat{s})(s(N))}{W(N)}\cdot \frac{1}{(W\circ \hat{s})(s(N))}\sum_{k=1}^{s(N)}\Delta (W\circ \hat{s})(k) f(k)-\frac{1}{W(N)}\sum_{n=N+1}^{\hat{s}(s(N))}\Delta W(n)f(s(n))\\
=& \frac{(W\circ \hat{s})(s(N))}{W(N)}\cdot \E_{k\leq s(N)}^{W\circ \hat{s}}f(k)-\frac{1}{W(N)}\sum_{n=N+1}^{\hat{s}(s(N))}\Delta W(n)f(s(n)).
\end{align*}
Now we claim that $\lim_{N\to\oo} \frac{\Delta(W\circ \hat{s})(s(N))}{W(N)}=0$ since
\[
\lim_{N\to\oo} \frac{\Delta(W\circ \hat{s})(s(N))}{W(N)} \leq \lim_{N\to\oo} \frac{\Delta(W\circ \hat{s})(s(N))}{W(\hat{s}(s(N)))}=\lim_{N\to\oo} \frac{\Delta(W\circ \hat{s})(s(N))}{W(\hat{s}(s(N)))} =0
\]
by the fact that $W$ is eventually increasing, $\hat{s}(s(N))\geq N$, and our assumption that $\lim_{N\to\oo}\frac{\Delta (W\circ \hat{s})(N)}{(W\circ \hat{s})(N)}=0$. Noting that
\begin{align*}
\left|\frac{1}{W(N)}\sum_{n=N+1}^{\hat{s}(s(N))}\Delta W(n)f(s(n))\right|\leq & \frac{\norm{f}_{\oo}}{W(N)}\sum_{n=N+1}^{\hat{s}(s(N))}\Delta W(n)\\
\leq &\frac{ \norm{f}_{\oo}}{W(N)}\cdot(W(\hat{s}(s(N)))-W(N))\\
\leq &\norm{f}_{\oo}\cdot \frac{ \Delta(W\circ \hat{s})(N)}{W(N)}=o_{N\to\oo}(1),
\end{align*}
we have
\[
\E_{n\leq N}^Wf(s(n)) = (1+o_{N\to\oo}(1)) \cdot \E_{k\leq s(N)}^{W\circ \hat{s}}f(k)-o_{N\to\oo}(1) = \E_{k\leq s(N)}^{W\circ \hat{s}}f(k)+o_{N\to\oo}(1),
\]
which means that we are done.

\end{proof}

\begin{remark}\label{remark:s_hat}
    When $s(n) = \floor{q^{-1}(n)}$ for some increasing function $q\colon \R\rightarrow \R$ with $q(\N) \subset \N$ and $\Delta q^{-1}(n)\leq 1$ for all $n\in \N$, we have $\hat{s}(n) = q(n)$.
\end{remark}

\begin{example}
    Take $W(N) = N$ and $s(N) = \floor{\sqrt{N}}$ so that $\hat{s}(N) = N^2$. Then for any bounded function $f:\N\rightarrow \C$ we have that 
\begin{align*}
    \E_{1\leq n\leq N}f({\lfloor\sqrt{n}\rfloor}) = \frac{1}{N}\sum_{n=1}^Nf(\lfloor\sqrt{n}\rfloor )= &\frac{1}{(\lfloor\sqrt{N}\rfloor)^2}\sum_{n=1}^{\lfloor \sqrt{N}\rfloor}(2n+1)f(n)+o_{N\to\oo}(1)\\=&  \E_{1\leq n\leq \lfloor \sqrt{N}\rfloor}^{V}f(n)+o_{N\to\oo}(1) \text{ for } V(N) = N^2.
\end{align*}
\end{example}

\begin{proof}[Proof of \cref{thm_1.2.3}]
   \cref{cor_formula1.9} gives us that $\E_{n\leq N}^{W \circ L}f(n)=\E_{n\leq N}^{W \circ L}\E_{k\leq L(n)}^{2\bin}f(k)+o_{N\to\oo}(1)$. Now we will use \cref{cor:change_of_variables_no_s_hat} to obtain
   \[
   \E_{n\leq N}^{W \circ L}\E_{k\leq L(n)}^{2\bin}f(k) =\E_{n\leq L(N)}^W\E_{k\leq n}^{2\bin}f(k)+o_{N\to\oo}(1).
   \]
   We may apply \cref{cor:change_of_variables_no_s_hat} because $W\in \mathscr{W}^*$ and $\lim_{N\to\oo}\frac{\log(W(N))}{N}=0$, so that we may compare equations (\ref{eqn_smoothish_weights}) and (\ref{eq:smoothish_weights_limit}) to see that $\lim_{N\to\oo}\frac{\Delta W(N)}{W(N)} = 0$.

  Now apply \cref{lem:boshernitzan}, \cref{cor:C_of_2BC}, and \cref{lem:boshernitzan} again, so that we have
    \begin{align*}
        \E_{n\leq L(N)}^W\E_{k\leq n}^{2\bin}f(k)
        =& \E_{n\leq L(N)}^W\E_{k\leq n}\E_{m\leq k}^{2\bin}f(m)+o_{N\to\oo}(1)\\
        =& \E_{n\leq L(N)}^W\E_{k\leq n}f(k)+o_{N\to\oo}(1)\\
        =& \E_{n\leq L(N)}^Wf(n)+o_{N\to\oo}(1),
    \end{align*}
completing the proof.
\end{proof}

\section{Proof of Theorem~\ref{thm_ud_of_Hardy_functions_along_Omega_Cesaro}}
\label{sec_5}

The goal of this section is to prove Theorem~\ref{thm_ud_of_Hardy_functions_along_Omega_Cesaro} (or rather, an equivalent form which we formulate now).
Note that part~\ref{itm_main_1} of \cref{thm_main} tells us that $(h(\vartheta(n)))_{n\in \N}$ is uniformly distributed mod~$1$ with respect to regular \Cesaro{} averages if and only if $(h(n))_{n\in \N}$ is uniformly distributed mod~$1$ with respect to $\E^{2\bin}$ averages.
This allows us to state Theorem~\ref{thm_ud_of_Hardy_functions_along_Omega_Cesaro} in the following equivalent way.

\begin{theorem}
\label{thm_ud_of_Hardy_functions_along_2bin_means}
Let $h$ be a Hardy field function with polynomial growth.
The following are equivalent:
\begin{enumerate}[label=(\roman{enumi}),ref=(\roman{enumi}),leftmargin=*]
\item
The sequence $(h(n))_{n\in \N}$ is uniformly distributed mod~$1$ with respect to $\E^{2\bin}$ averages.
\item 
One of the following two (mutually exclusive) conditions is satisfied:

\begin{enumerate}[label=(\alph{enumii}),ref=(\alph{enumii}),leftmargin=*]
\item
\label{itm_ud_of_Hardy_functions_along_2bin_ii_a}
$\lim_{x\to\infty} \frac{|h(x)-p(x)|}{x \log x}=\infty$ for all $p(x)\in \Q[x]$;
\item
\label{itm_ud_of_Hardy_functions_along_2bin_ii_b}
$\lim_{x\to\oo}\frac{|h(x)-p(x)|}{\sqrt{x}}=\oo$ for each $p(x)\in \Q[x]$ and there exists $q(x)\in \Q[x]$ such that $\lim_{x\to\oo}\frac{|h(x)-q(x)|}{x}<\oo$.
\end{enumerate}
\end{enumerate}
\end{theorem}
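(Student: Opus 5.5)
By Weyl's criterion, (i) says that for every nonzero integer $m$, $\E^{2\bin}_{n\le N} e(mh(n))\to 0$. Since $\E^{2\bin}_{n\le N}$ is essentially a Gaussian average of width $\sqrt N$ centred at $N$, the proof is a local-window analysis. I will show that $\E^{2\bin}$-equidistribution of $h(n)$ is equivalent to equidistribution of $h$ along the windows $n\in[N-A\sqrt N,N+A\sqrt N]$ for every fixed $A$, with the binomial (Gaussian) weights $\tfrac{1}{2^{N+1}}\binom{N}{\lfloor n/2\rfloor}$. The reduction mirrors the proof of \cref{thm:binomial_weights}. The weights outside such windows carry mass $o(1)$ as $A\to\infty$. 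Inside, the weights are approximated by $g(n,N,\sqrt N)$, which varies smoothly on scales $\sqrt N$. Cutting the window into subintervals of length $\epsilon\sqrt N$, on which $g$ is nearly constant, reduces the problem to short exponential sums $\sum_{n\in J} e(mh(n))$ over intervals $J$ of length $\asymp\sqrt N$ located near $N$.

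\textbf{Sufficiency, case (a).} Here $h(x)-p(x)$ dominates $x\log x$ for every $p\in\Q[x]$. I apply van der Corput (Weyl differencing) on intervals of length $H\asymp\sqrt N$ near $N$, with the number of differencing steps $k$ chosen so that $(h-p)^{(k)}$ is small but not too small on that scale. The Hardy field structure gives comparability of all derivatives, via L'Hospital and the growth conditions. The condition $|h-p|\gg x\log x$ is exactly what guarantees that after differencing the phase has derivative $\gg 1/H$, i.e. $\gg N^{-1/2}$, at the final stage. I will borrow the standard Boshernitzan-style argument, applied to short intervals, in the spirit of Fejér's theorem and the van der Corput difference theorem. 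Rational coefficients of $p$ are handled by splitting $n$ into residue classes modulo the common denominator.

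\textbf{Sufficiency, case (b).} Here $h=q+r$ with $q\in\Q[x]$, $r$ of at most linear growth, and $|r-c|\gg\sqrt x$ for rational-polynomial corrections. I again restrict to residue classes so that $e(mq(n))$ is constant. What remains is $r(n)$ on a window of length $\sqrt N$ around $N$, where $r(n)\approx r(N)+r'(N)(n-N)$.

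\begin{itemize}
\item If $r'(N)$ stays away from rationals with small denominators, or $r'(N)\to\alpha$ irrational, the linear phase equidistributes over the Gaussian window.
\item If $r'(N)\to 0$ (sublinear $r$), then $\sqrt N\, r'(N)\to\infty$ because $r\gg\sqrt x$. Hence the phase $r'(N)(n-N)$ sweeps many periods across the window and the Gaussian average tends to $0$. The key estimate is the Fourier transform of the Gaussian: the average equals roughly $e^{-2\pi^2 m^2 r'(N)^2 N}\to 0$.
\item The case where $r'(N)$ tends to a rational is reduced to the previous one by absorbing the rational slope into $q$.
\end{itemize}

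\textbf{Necessity.} Suppose neither (a) nor (b) holds. Then either $h=p+O(\sqrt x)$ for some $p\in\Q[x]$, or $h-p$ lies between $x$ and $x\log x$ in growth for every $p$ (in the Hardy sense, with $h-p\gg x$ for all $p$ but $h-q=O(x\log x)$ for some $q$).

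\begin{itemize}
\item In the first case, I pass to residue classes modulo the denominator of $p$, where $e(mp(n))$ is constant. On the window, $h-p$ has derivative $O(N^{-1/2})$, so it varies by $O(1)$. The Gaussian average then converges along subsequences to a nonzero quantity for suitable $m$; the Gaussian transform of a bounded-slope phase is bounded below.
\item In the second case, $r=h-q$ has $r'(x)$ behaving like $c\log x$ plus slower terms, and $r''(x)\asymp 1/x$. On the window, $r(n)\approx r(N)+r'(N)(n-N)+O(1)$ with the quadratic term $r''N=O(1)$. Choose $N$ along a subsequence with $r'(N)$ within $o(N^{-1/2})$ of an integer; this is possible since $r'$ moves continuously and slowly to infinity. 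Along that subsequence the linear phase is trivial and the quadratic term is bounded, so $\E^{2\bin}e(h(n))$ stays bounded away from $0$. This contradicts Weyl's criterion.
\end{itemize}

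The main obstacle is this sharp boundary analysis at $x\log x$ and at $\sqrt x$, which means controlling the quadratic term uniformly so that the limiting Gaussian integral $\int e^{-t^2/2}e(\beta t^2)\,dt$ is nonzero. It is nonzero since its modulus is $(1+16\pi^2\beta^2)^{-1/4}$. The same uniformity is needed in the van der Corput step of case (a) near the threshold.
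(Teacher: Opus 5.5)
The weak point is sufficiency under (a), which is the main analytic step, and as you describe it that step would not go through. If the differencing uses fixed shifts $j$, as your appeal to the van der Corput difference theorem and Fejér's theorem suggests, it cannot reach the threshold $x\log x$ on windows of length $\asymp\sqrt N$.

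Take $h(x)=x\log^2x$, which satisfies (a). For each fixed $j$, $h(n+j)-h(n)$ has derivative $\asymp j\log N/N$, so it changes by $\oh(1)$ across the window. The differenced averages therefore have modulus tending to $1$, and the difference theorem yields nothing. Without differencing, $h'\to\infty$ and no Fejér-type criterion applies. In the subquadratic range, bounded shifts plus a first-derivative bound only work when $h/x^{3/2}\to\infty$. The same problem appears if you try to reduce, say, $h(x)=x^2\log\log x$ to the subquadratic range: one fixed-shift difference produces a phase of size $x\log\log x$, below the threshold, although the third-derivative test applied to $h$ directly works.

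Your assertion that $|h-p|\gg x\log x$ ``is exactly what guarantees that after differencing the phase has derivative $\gg N^{-1/2}$'' holds only if the shifts range up to a fixed proportion of the window. That is the quantitative van der Corput inequality, which is essentially the proof of the second-derivative test. So the missing ingredient is a quantitative estimate. Use $\frac{1}{|I|}\bigl|\sum_{n\in I}e(h(n))\bigr|\ll\lambda^{1/2}+|I|^{-1}\lambda^{-1/2}$ for $\lambda\le|h''|\le\alpha\lambda$ on $I$, with $|I|=\epsilon\sqrt N$ and $\lambda\asymp h''(N)$. This is $\oh(1)$ precisely when $h''(N)\to0$ and $Nh''(N)\to\infty$, and by L'H\^opital the latter is equivalent to $h/(x\log x)\to\infty$. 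The paper does exactly this in Theorem~\ref{thm:s_ud}, with the third-derivative test (needing $h'''(N)N^{3/2}\to\infty$) for the next range.

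You also need to treat the case where $h$ has a component of degree $\ge2$ with irrational leading coefficient, e.g.\ $h(x)=\sqrt2x^2$ plus lower-order terms. There no derivative of $h-p$ is ``small but not too small''. Instead you must difference down to a linear phase with irrational slope; fixed shifts suffice there, and the bounds are uniform in the window's position.

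Everything else is sound and follows a genuinely different route from the paper. You replace $\E^{2\bin}_{n\le N}$ by the Gaussian weights $g(n,N,\sqrt N)$, as in the proof of Theorem~\ref{thm:binomial_weights}, expand $h$ on $|n-N|\le A\sqrt N$, and evaluate Gaussian Fourier and Fresnel integrals. This gives:
(b) decay like $e^{-2\pi^2m^2r'(N)^2N}$;
$h=p+\Oh(\sqrt x)$: a limit of modulus $e^{-2\pi^2m^2\beta^2}>0$, where $\beta=\lim\sqrt N\,(h-p)'(N)$ and $m$ clears the denominators of $p$;
the paper's cases (3) and (4): the common factor $(1-4\pi i\gamma)^{-1/2}$, with $\gamma=\lim\frac{m}{2}Nr''(N)$ and $\gamma=0$ in case (3).

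The paper instead proves sufficiency through short-interval averages. It passes to $e^{\sqrt x}$-weighted averages by Theorem~\ref{thm:mikey_thm}, then to $\E^{2\bin}$ by Theorem~\ref{thm:Boos} and Lemma~\ref{lem:bin_stronger_than_e_sqrt}. For necessity it uses a Tauberian theorem for Euler summability when $h=p+\Oh(\sqrt x)$, and the tangent-line Lemma~\ref{lem:tangent_line} in case (3). Your Fresnel computation appears there only for case (4), in Lemma~\ref{lem_binomial_averages_of_nlogn}.

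Your route is more self-contained and uniform. The paper's yields the stronger fact that in the sufficiency cases even the $e^{\sqrt x}$-weighted averages vanish.

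Small fixes:
in your second necessity case you must again take $m$ with $mq$ integer-valued;
you have only $r''=\Oh(1/x)$, not $r''\asymp1/x$ (in case (3) it is $\oh(1/x)$);
you should check that the cubic Taylor term $r'''(N)N^{3/2}$ tends to $0$.
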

The rest of this section is devoted to the proof of \cref{thm_ud_of_Hardy_functions_along_2bin_means}. 
We begin by recalling that Hardy functions are totally ordered by asymptotic growth rate. Thus, we can prove \cref{thm_ud_of_Hardy_functions_along_2bin_means} by considering cases. Let $h$ be a function of polynomial growth belonging to a Hardy field. Exactly one of the following statements is true.
\begin{enumerate}[label = (\arabic*)]
    \item There exists $q(x)\in \Q[x]$ such that $\lim_{x\to\oo}\frac{|h(x)-q(x)|}{\sqrt{x}}<\oo$,
    \item  $\lim_{x\to\oo}\frac{|h(x)-p(x)|}{\sqrt{x}}=\oo$ for all $p(x)\in \Q[x]$ and there exists $q(x)\in \Q[x]$ such that $\lim_{x\to\oo}\frac{|h(x)-q(x)|}{x}<\oo$,
    \item $\lim_{x\to\oo}\frac{|h(x)-p(x)|}{x}=\oo$ for all $p(x)\in \Q[x]$ and there exists $q(x)\in \Q[x]$ such that $\lim_{x\to\oo}\frac{|h(x)-q(x)|}{x\log(x)}=0$,
    \item $\lim_{x\to\oo}\frac{|h(x)-p(x)|}{x\log(x)}>0$ for all $p(x)\in \Q[x]$ and there exists $q\in \Q[x]$ such that $0<\lim_{x\to\oo}\frac{|h(x)-q(x)|}{x\log(x)}<\oo$,
    \item $\lim_{x\to\oo}\frac{|h(x)-p(x)|}{x\log(x)}=\oo$ for all $p(x)\in\Q[x]$.
\end{enumerate}

It is evident that conditions (2) and (5) above are identical to conditions (b) and (a) in \cref{thm_ud_of_Hardy_functions_along_2bin_means}, respectively. We will show that if either of conditions (2) or (5) hold then $(h(n))_{n\in \N}$ is uniformly distributed mod $1$ with respect to $\E^{2\bin}$ averages, and we will also show that if any of conditions (1), (3), or (4) hold then $(h(n))_{n\in\N}$ is not uniformly distributed mod $1$ with respect to $\E^{2\bin}$ averages.

 Given a function $f\colon \N\rightarrow \C$ and an interval of natural numbers $[a,b]$, we will find it convenient to use the notation
\begin{equation*}
\E_{n\in [a,b]}f(n)= \frac{1}{b-a}\sum_{n=a}^bf(n).
\end{equation*}

First we will consider the cases where one of conditions (2) or (5) holds. It suffices to prove the following theorems.

\begin{theorem}\label{thm:s_implies_w_implies_bin}
       Let $f:\N\rightarrow \C$ be bounded, let $\ell\in \C$, and let $W(x) = e^{\sqrt{x}}$. 
       Consider the following statements.
       \begin{enumerate}[label = (\roman*)]
           \item There exists a function $V$ belonging to a Hardy field satisfying $\lim_{N\to\oo}\frac{\log(W(N))}{\log(V(N))}=0$ and
           \[
           \lim_{N\to\oo}\E_{n\in [N-s(N),N]}f(n)=\ell
           \]
           for each $s\colon \N\rightarrow \N$ with $\lim_{N\to\oo}s(N)\cdot \Delta \log(V(N))= \oo$,
          \item  $\lim_{N\to\oo}\E^W_{n\leq N}f(n)=\ell$,
          \item $\lim_{N\to\oo}\E^{2\bin}_{n\leq N}f(n)=\ell$.
       \end{enumerate}
       Then (i)$\implies $(ii)$\implies$(iii).
\end{theorem}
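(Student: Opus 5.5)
The plan is to prove the two implications separately. Both rest on one principle: an average whose weights are concentrated on windows of length much larger than the ``local scale'' inherits the limit of the short-interval averages.

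\textbf{(i)$\implies$(ii).} Here $\Delta W(n)\approx \frac{1}{2\sqrt n}e^{\sqrt n}$, so the mass of $\E^W_{n\le N}$ lives on a window $[N-C\sqrt N,N]$ with $C$ large. More precisely, for $n\le N-t\sqrt N$ we have $W(n)/W(N)\le e^{-t/2}$ roughly.

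Fix $\epsilon>0$ and choose $t(N)\to\infty$ slowly. Set $s(N)=\lfloor t(N)\sqrt N\rfloor$. The hypothesis $\log W(N)/\log V(N)\to 0$, i.e.\ $\sqrt N=o(\log V(N))$, together with the Hardy-field regularity of $V$ (L'Hospital) gives $\Delta\log V(N)\gg 1/\sqrt N$. After possibly slowing $t$, this yields $s(N)\Delta\log V(N)\to\infty$, and also the same for $s'(N)=\lfloor \sqrt N/t(N)\rfloor$. That is the point where the growth condition on $V$ is used.

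Now partition $[N-s(N),N]$ into consecutive blocks of length about $\sqrt N/t(N)$. On each block $\Delta W$ is essentially constant: its relative variation is $O(1/t(N))\to0$. So the $W$-weighted sum over each block is $(1+o(1))$ times the block's weight times the block's Cesàro average. The block averages are $\ell+o(1)$ uniformly, by (i) applied with the short-length function $s'$ evaluated at the block's right endpoint $M$. Since $M\asymp N$, the condition $s'(M)\Delta\log V(M)\to\infty$ still holds, after passing to the general formulation ``for every $s$'', which applies to any assignment of lengths.

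The tail $n<N-s(N)$ contributes at most $\|f\|_\infty e^{-t(N)/2}(1+o(1))\to0$. Summing over blocks gives $\E^W_{n\le N}f(n)\to\ell$.

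I expect the main obstacle to be the uniformity in this step. Statement (i) is phrased for a single function $s(N)$ evaluated at $N$, while we need it at many right endpoints $M\in[N-s(N),N]$ with lengths depending on $N$. I would handle this by an argument by contradiction. If the block averages failed to converge uniformly, extract a sequence of bad pairs $(M_j,\text{length}_j)$ and build one function $s$ with $s(M_j)=\text{length}_j$ (extended arbitrarily but large elsewhere). This $s$ violates (i).

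\textbf{(ii)$\implies$(iii).} The $2\bin$ weights $\frac{1}{2^{N+1}}\binom{N}{\lfloor n/2\rfloor}$ are concentrated on $[N-N^{3/5},N+N^{3/5}]$, and the local scale there is $\sqrt N$. I would write the $2\bin$ average as a mixture of $W$-averages. By the partial-summation idea in \cref{lem:boshernitzan} (summation by parts against $\Delta W$), we get $\E^{2\bin}_{n\le N}f=\sum_M c_{M,N}\,\E^W_{n\le M}f(n)$ with $c_{M,N}=W(M)\,\Delta_M\!\big(\delta_{M,N}/\Delta W(M)\big)$, up to sign conventions and boundary terms. The boundary term is negligible because $\delta_{\cdot,N}$ vanishes beyond $2N$.

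It then suffices to show $\sum_M|c_{M,N}|=O(1)$ and $\sum_M c_{M,N}\to1$; this is a Toeplitz/Silverman condition, and with it (ii) gives (iii). The second condition follows by taking $f\equiv1$. For the first: on the bulk window $\delta_{M,N}$ is a discretized Gaussian with width $\sqrt N$, so $\delta/\Delta W$ is $e^{-\sqrt M}$ times a smooth bump. Its discrete derivative is $\frac{\delta}{\Delta W}\big(-\tfrac{1}{2\sqrt M}+O((M-N)/N)+\text{parity jumps}\big)$. Multiplying by $W(M)\approx 2\sqrt M\,\Delta W(M)$ gives $|c_{M,N}|\lesssim \delta_{M,N}(1+|M-N|/\sqrt N)$. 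Summing this gives $O(1)$.

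The parity jumps of $\lfloor n/2\rfloor$ contribute terms of size $\delta_{M,N}\cdot\sqrt M\cdot O(|M-N|/N)$, which is again summable. Outside the window, crude bounds suffice.
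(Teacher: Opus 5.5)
Your proof is correct. For (ii)$\implies$(iii) it follows the paper's argument. Your summation-by-parts identity with $c_{M,N}=W(M)\bigl(\delta_{M,N}/\Delta W(M)-\delta_{M+1,N}/\Delta W(M+1)\bigr)$ is the mechanism behind \cref{thm:Boos}. The bound $\sup_N\sum_M|c_{M,N}|<\infty$ is the condition checked in \cref{lem:bin_stronger_than_e_sqrt}. The paper gets it from unimodality of $\eta$ and $\sup\eta=O(1)$. You get it from a bulk expansion plus Gaussian tails, and you treat the parity jumps of the $\E^{2\bin}$ weights explicitly, where the paper only says ``a similar calculation''.

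One small omission: a Toeplitz transform also needs $c_{M,N}\to0$ for each fixed $M$. The matrix with $c_{1,N}=1$ and all other entries $0$ satisfies your two conditions but sends every sequence to its first term. Here the column condition is immediate, since $\delta_{M,N}\to0$.

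For (i)$\implies$(ii) your route is genuinely different. The paper does not argue this directly; it invokes \cref{thm:mikey_thm} (Theorem~C of \cite{reilly26}). That theorem turns short-interval convergence into convergence of $\E^W$ for every Hardy weight $W$ with $\log W=o(\log V)$. You give a direct argument specific to $W(x)=e^{\sqrt x}$:
\begin{itemize}
\item the $W$-mass outside $[N-t\sqrt N,N]$ is at most $e^{-t/2}$;
\item $\Delta W$ is constant up to a factor $1+O(1/t)$ on blocks of length $\sqrt N/t$;
\item $\sqrt N\,\Delta\log V(N)\to\infty$ (L'Hospital in a Hardy field) makes these block lengths admissible in (i).
\end{itemize}
Your argument is self-contained and uses (i) exactly as stated. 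The cited theorem needs hypotheses that (i) does not literally supply, e.g.\ $\log V(N)/N\to0$ and $W$ lying in the same Hardy field as $V$. In return, the citation gives more: all slower Hardy weights, plus a converse.

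Your contradiction argument for uniformity relies on (i) allowing arbitrary, non-monotone $s$. You can avoid it entirely. Tile $[N-t(N)\sqrt N,N]$ backwards with blocks $[M-s'(M),M]$, where each length depends on the block's own right endpoint $M$. Then a single function $s'$ is used, and convergence as $M\to\infty$ is automatically uniform over all right endpoints $M\geq N/2$.
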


\begin{theorem}\label{thm:w_ud}
Let $h$ be a function with polynomial growth which belongs to a Hardy field and let $W(x)=e^{\sqrt{x}}$. Suppose that $\lim_{x\to\oo}\sqrt{x} |h'(x)-p(x)|=\infty$ for all $p(x)\in \Q[x]$ and there is some $q(x)\in \Q[x]$ such that $\lim_{x\to\infty}|h'(x)-q(x)|<\infty$.
Then 
\begin{equation}
\lim_{N\to\oo}\E^W_{n\leq N}e^{2\pi i k h(n)}=0
\end{equation}
for each $k\in \Z\setminus \{0\}$.
\end{theorem}

\begin{theorem}\label{thm:s_ud}
Let $h$ be a function with polynomial growth which belongs to a Hardy field. Suppose that
\begin{equation} \label{eq:s_ud}
\lim_{x\to\infty} \frac{|h'(x)-p(x)|}{\log x}=\infty \text{ for all }p(x)\in \Q[x].
\end{equation}
Then there exists a function $V\in \mathscr{W}^*$ which belongs to a Hardy field and satisfies 
\[
\lim_{N\to\oo}\frac{\sqrt{N}}{\log(V(N))}=0
\]
such that 
\begin{equation}
\lim_{N\to\oo}\E_{n\in [N-s(N),N]}e^{2\pi i k h(n)}=0
\end{equation}
for all $k\in \Z\setminus \{0\}$ and all $s\colon\N\rightarrow \N$ with $\lim_{N\to\oo}s(N)\cdot \Delta \log(V(N))=\oo$ and $s(N)\leq N-1$ for all $N\in \N$.
\end{theorem}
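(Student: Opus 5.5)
We have to prove \cref{thm:s_ud}: if $h'$ stays far from every rational polynomial (at scale $\log x$), then $e^{2\pi i k h(n)}$ averages to zero over short windows $[N-s(N),N]$ whose length is only a little more than $\sqrt N$. The plan is to use van der Corput's difference theorem on short intervals, combined with Boshernitzan-type equidistribution of Hardy functions on short intervals. We fix $k\neq 0$ and replace $h$ by $kh$; condition \eqref{eq:s_ud} is preserved. The function $V$ will be chosen by comparison with $h$, of the shape $\log V(N)=N/\psi(N)$, where $\psi$ is a Hardy function with $\psi(N)=\oh(\sqrt N)$ and $\psi\to\infty$ slowly. Then $\Delta\log V(N)\approx 1/\psi(N)$, so the condition $s(N)\Delta\log V(N)\to\infty$ just says $s(N)/\psi(N)\to\infty$. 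Moreover $\sqrt N/\log V(N)=\psi(N)/\sqrt N\to 0$, as required. Since the class of $V$ is ours to choose, it remains to pick $\psi$, depending on $h$, so that the short-interval averages vanish for every window length $s(N)\gg\psi(N)$.

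\textbf{Case 1: $h$ grows slowly.} Suppose $h(x)\prec x^{1+\epsilon}$ after subtracting a rational polynomial. Let $d$ be the number with $h^{(d)}$ "between" polynomial growth rates. Write $h=p+r$ with $p\in\Q[x]$ chosen (by \eqref{eq:s_ud} and the total ordering of Hardy germs) so that $r'(x)/\log x\to\infty$ while $r$ has no rational-polynomial part at the top degree. Let $Q$ be the common denominator of the coefficients of $p$. Splitting $n$ into residue classes mod $Q$, the polynomial $p(n)$ is constant mod $1$ on each class. On a window of length $s$ around $N$, Taylor-expand
\[
r(N-m)=r(N)-mr'(N)+\tfrac{m^2}{2}r''(N)-\dots .
\]
We then apply the short-interval exponential-sum estimates: Kusmin--Landau / van der Corput $A$- and $B$-processes, i.e.\ the standard bound $\sum_{n\in I}e(g(n))\ll |I|\lambda^{1/2}+\lambda^{-1/2}$ when $|g''|\asymp\lambda$, iterated via van der Corput differencing to the $j$-th derivative. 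This gives $o(s)$ as soon as $s\,|r^{(j)}(N)|^{1/j}\to\infty$ and the higher derivatives are small. Condition \eqref{eq:s_ud} forces some derivative $r^{(j)}(N)$ to be of size at least $\gg(\log N)/N^{j-1}$. From this we choose $\psi$, a Hardy function with $\psi(N)\ge |r^{(j)}(N)|^{-1/j}\cdot\omega(N)$ for some slowly growing $\omega$. The genuinely delicate subcase is $j=1$, when $r'(N)$ is close to a rational $a/Q$ mod $1$. Here the $\log x$ threshold in \eqref{eq:s_ud} is what guarantees that $r'(N)$ modulo rationals does not stall. We then use the second derivative, $r''(N)\gg \log N/N$ roughly, to get cancellation in windows of length about $\sqrt{N/\log N}=\oh(\sqrt N)$. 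This explains why the admissible length sits just below $\sqrt N$.

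\textbf{Case 2: $h$ grows faster.} Suppose $h$ has polynomial growth of higher order. Differentiate until the first derivative that is not asymptotically polynomial, and apply the same van der Corput iteration. Here the relevant derivative is even larger, so windows of length $N^{1/2-\delta}$ already suffice, and $\psi(N)=N^{1/2-\delta}$ works.

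\textbf{Main obstacle.} The hardest step is the borderline regime where $h'(x)-p(x)$ is only slightly larger than $\log x$ (for instance $h'(x)=\log x\cdot\log\log x$, so $h(x)\approx x\log x\log\log x$). There, one must check that a single Hardy $V$ works uniformly for all $s$ with $s\Delta\log V\to\infty$. It must also satisfy $\sqrt N=\oh(\log V(N))$, which requires the second-derivative bound to beat $1/N$ by a factor tending to infinity. I would first try to handle this by applying van der Corput's second-derivative test directly to $r$ on $[N-s,N]$ with $\psi(N)=\sqrt{N/\log\log N}$-type choices. I would fall back on the Weyl-differencing inequality plus Boshernitzan's criterion for $r'$ if the phase is too close to linear.
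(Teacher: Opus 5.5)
\textbf{Verdict: right strategy, but a genuine gap in the decisive case.} Your strategy is the paper's: van der Corput derivative tests on the windows $[N-s(N),N]$, with $V$ tuned to the size of the relevant derivative of $h$. Your residue-class treatment of the rational polynomial part is actually more careful than the paper's reduction. The gap is in your Case~1 ($r(x)=x^{1+\oh(1)}$), which rests on a false estimate and is then left open.

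\textbf{The false estimate.} Hypothesis \eqref{eq:s_ud} gives $r'(x)/\log x\to\infty$. By L'H\^opital (the limit of $x\,r''(x)$ exists in a Hardy field), this yields only $x\,r''(x)\to\infty$, not $r''(N)\gg\log N/N$. Take $r(x)=x\log x\log\log x$, for which $N r''(N)\sim\log\log N$. Consider windows of the length $\sqrt{N/\log N}$ (or slightly larger) that your choice declares admissible, but with $s(N)^2\log\log N/N\to0$.
\begin{itemize}
\item On such windows $m^2r''(N)\to0$ for $m\le s(N)$, so $e^{2\pi i r(N-m)}\approx e^{2\pi i(r(N)-m r'(N))}$.
\item Since $r'$ increases to $\infty$ while $r''$ decreases to $0$, the mean value argument of \cref{lem_dioph_approx_of_f} shows that for infinitely many $N$, $r'(N)$ lies within $r''(N)$ of an integer.
\item For those $N$ the window average has modulus close to $1$.
\end{itemize}
So the window length you derive in Case~1 makes the conclusion false. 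Your ``main obstacle'' paragraph notices exactly this example but only offers an ad hoc choice of $\psi$ for it, not a general argument.

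\textbf{The missing step.} Tie $V$ to $r''$ itself: take $\Delta\log V(N)\approx\sqrt{r''(N)}$, i.e.\ $\psi=(r'')^{-1/2}$. The paper takes $V(N)=\exp\int_1^N\sqrt{h''(t)}\,dt$.
\begin{itemize}
\item By Stolz--Ces\`aro, $\sqrt N/\log V(N)\to0$ reduces to $N r''(N)\to\infty$. This is precisely what the $\log x$ threshold in \eqref{eq:s_ud} supplies.
\item Admissibility of $s$ becomes $s(N)\sqrt{r''(N)}\to\infty$.
\item For $s(N)=\oh(N)$, apply the second-derivative test with $\lambda=r''(N)$ and $\alpha=r''(N-s(N))/r''(N)$, which stays bounded. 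The normalized window sum is then $\ll\alpha\lambda^{1/2}+(s(N)\lambda^{1/2})^{-1}\to0$.
\item The case $s(N)\asymp N$ follows from ordinary uniform distribution.
\end{itemize}
This handles the borderline regime uniformly. Your discussion of ``$j=1$'', rational approximations to $r'(N)$ and Boshernitzan's criterion is not needed.

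\textbf{Case~2.} The paper analogously uses $\Delta\log V\approx(h''')^{1/3}$ with the third-derivative test, and reduces higher degrees to it by van der Corput differencing. Note that the classical $j$-th derivative test for $j\ge4$ requires $s^{4-2^{3-j}}|r^{(j)}|\to\infty$, not merely $s|r^{(j)}|^{1/j}\to\infty$. So your $\delta$ in $\psi=N^{1/2-\delta}$ must be read off from the actual exponents.

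\textbf{A case neither argument covers.} Phases in which some derivative tends to a nonzero irrational constant, such as $\sqrt2x^2+x^{3/2}$, are handled by neither your sketch nor the paper's. There a derivative test fails, and one needs Weyl differencing combined with Kusmin--Landau.
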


The first implication of Theorem \ref{thm:s_implies_w_implies_bin} follows from this next theorem.
\begin{theorem}[{\cite[Theorem C]{reilly26}}]\label{thm:mikey_thm}
   Suppose that $V\in \mathscr{W}^*$ belongs to a Hardy field and satisfies $\lim_{N\to\oo}\frac{\log(V(N))}{\log (N)}=\oo$ and $\lim_{N\to\oo}\frac{\log(V(N))}{ N}=0$.  Let $f\colon \N\rightarrow \C$ be bounded, and let $\ell\in \C$. The following statements are equivalent:
   \begin{enumerate}[label = (\arabic*)]
       \item $\lim_{N\to\oo}\E_{n\leq N}^Wf(n) = \ell$ for each 
       $W\in \mathscr{W}^*$ which belongs to the same Hardy field as $V$ and satisfies $\lim_{N\to\oo}\frac{\log(W(N))}{\log(V(N))}=0$,
       \item $\lim_{N\to\oo} \E_{n\in [N-s(N),N]}f(n) = \ell$ for all nondecreasing functions $s\colon\N\rightarrow\N$ which satisfy $\lim_{N\to\oo}s(N) \cdot \Delta \log(V(N))=\oo$ and $s(N)\leq N-1$ for all $N\in \N$.
   \end{enumerate}
\end{theorem}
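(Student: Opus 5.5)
The plan is to prove both implications by relating weighted averages to averages over short intervals. Summation by parts or block decompositions give one direction; differences of weighted averages at two nearby times give the other. Throughout I use the regularity coming from $\mathscr{W}^*$ and the Hardy field. The key consequence is that if $\log W(N)=\oo(\log V(N))$ then $\Delta\log W(N)=\oo(\Delta\log V(N))$, by \cref{thm:stolz_cesaro} and L'Hospital-type monotonicity in Hardy fields. Moreover $\Delta W$ varies little over windows of length $t$ whenever $t\,\Delta\log W(N)\to 0$ and $t=\oo(N)$. This follows from the computation in the proof of \cref{lem:boshernitzan}: $N\Delta^2W/\Delta W$ has a limit, and for fast-growing $W$ one has $\Delta^2W/\Delta W\sim\Delta\log W$.

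\emph{(2)$\Rightarrow$(1).} Fix an admissible $W$. I will choose a nondecreasing block length $s(M)$ squeezed between the two scales, for instance
\[
s(M)\approx\bigl(\Delta\log V(M)\,\Delta\log W(M)\bigr)^{-1/2},
\]
additionally capped by $o(M)$ if needed. Then $s\,\Delta\log V\to\oo$ while $s\,\Delta\log W\to 0$. Next I partition $[M_0,N]$ greedily into consecutive blocks $(M-s(M),M]$. On each block $\Delta W(n)=\Delta W(M)(1+\oo(1))$, and by (2) the block average of $f$ is $\ell+\oo(1)$. Hence the block contributes $(W(M)-W(M-s(M)))(\ell+\oo(1))$. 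Summing telescopes to $W(N)(\ell+\oo(1))$. The initial segment $[1,M_0]$ and the partial last block are negligible because $W(N)\to\oo$ and $s\,\Delta\log W\to 0$. This yields $\E^W_{n\le N}f(n)\to\ell$.

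\emph{(1)$\Rightarrow$(2).} I argue by contradiction. Suppose some admissible $s$ and some subsequence $N_j$ satisfy $|\E_{n\in[N_j-s(N_j),N_j]}f(n)-\ell|\ge\epsilon$. Put $u(N)=s(N)\Delta\log V(N)\to\oo$. I want a Hardy function $\psi\to\oo$, in the Hardy field of $V$ or a harmless extension, with $\psi(N_j)\le u(N_j)$ along a further subsequence. I then define $W$ by $\log W=\log V/\psi$, or more precisely by $\Delta\log W\asymp\Delta\log V/\psi$. Then $\log W=\oo(\log V)$, so (1) applies to $W$. Moreover $s(N_j)\Delta\log W(N_j)\gtrsim 1$.

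The core identity is
\[
\sum_{N'-t<n\le N'}\Delta W(n)f(n)=W(N')\E^W_{n\le N'}f-W(N'-t)\E^W_{n\le N'-t}f .
\]
I choose $t\approx\delta/\Delta\log W(N')$ with $\delta>0$ small. Then $W(N')-W(N'-t)\asymp\delta W(N')$, $\Delta W$ is constant up to a factor $1+O(\delta)$ on the window, and both weighted averages equal $\ell+\oo(1)$. Consequently the window average of $f$ is $\ell+O(\delta)+\oo(1)$. Since $t\lesssim s(N_j)$, I cover $[N_j-s(N_j),N_j]$ by such windows, up to a remainder of relative size $O(\delta)$. This contradicts the choice of $\epsilon$ once $\delta$ is small. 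Monotonicity of $s$ and $s(N)\le N-1$ keep all windows inside $\N$ and of comparable scale.

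The main obstacle I expect is producing $\psi$, and hence $W$, inside the \emph{same} Hardy field as $V$ with the required growth along an arbitrary subsequence. Slowly growing functions such as iterated logarithms of $\log V$ need not lie in that field. I would first try $\psi=(\log V)^{\eta}$, taking $W=\exp((\log V)^{1-\eta})$, which stays in the field when it is closed under the relevant operations. If $u$ grows slower than every such power, I would instead rescale the windows. Choose $t$ with $t\,\Delta\log W\approx\delta$ only on the range where $t\le s$, and treat the remaining case, where $W$-windows are longer than $s$, via the diagonal choice of subsequence. Getting this case analysis uniform is where I expect the real work to be.
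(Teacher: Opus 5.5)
The paper does not prove this theorem; it imports it from \cite{reilly26} and only ever uses (2)$\Rightarrow$(1). Your proof of (2)$\Rightarrow$(1) is sound. You take a block length squeezed between $1/\Delta\log V$ and $1/\Delta\log W$, capped at $o(M)$, and then telescope.

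Your direction (1)$\Rightarrow$(2) has a genuine gap, exactly at the step you flag. The whole content of this direction is producing an admissible $W$ with $s(N_j)\,\Delta\log W(N_j)\gtrsim 1$, and your fallback cannot replace that step. The identity $\sum_{N'-t<n\le N'}\Delta W(n)f(n)=W(N')\E^W_{n\le N'}f(n)-W(N'-t)\E^W_{n\le N'-t}f(n)$ pins down the window average only up to an error $o(1)/(t\,\Delta\log W(N'))$. Here the $o(1)$ is the uncontrolled, $f$-dependent rate in (1). So windows with $t\,\Delta\log W\to 0$ are invisible to $\E^W$, and ``rescaling the windows'' or choosing a diagonal subsequence gives nothing at scale $s(N_j)$. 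Since $u=s\,\Delta\log V$ may tend to infinity arbitrarily slowly, and the bad $N_j$ may be arbitrarily sparse, you need the following: a $\psi\to\infty$ that lies in a Hardy field together with $V$ and stays below an essentially arbitrary prescribed $u\to\infty$. Powers $(\log V)^{\eta}$, or any fixed countable supply of functions, cannot do this.

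This is not a technicality. If ``the same Hardy field'' means a fixed small field, the statement is false. Take $V=e^{\sqrt x}$ in $\R(x,\sqrt x,e^{\sqrt x})$. Every $W$ there with $W\to\infty$ and $\log W=o(\sqrt x)$ satisfies $W\sim c\,x^{a/2}$ with $a\ge 1$. So (1) only asks for convergence of polynomially weighted averages, which for bounded $f$ is just Ces\`aro convergence. But the indicator of the blocks $[N_j-N_j^{0.6},N_j]$, along a very sparse sequence $N_j$, has Ces\`aro mean $0$ and violates (2) with $s(N)=\lfloor N^{0.6}\rfloor$.

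A correct proof therefore has to read (1) as ranging over all $W$ that share \emph{some} Hardy field with $V$. Equivalently, it works inside a maximal Hardy field containing $V$, which is closed under $\exp$, $\log$ and antiderivatives. It must then invoke a cofinality property of Hardy fields, of the kind established by Boshernitzan: functions lying in every maximal Hardy field can be chosen to tend to infinity below any prescribed $u\to\infty$.

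With such a $\psi$, the construction is as follows. Replace $\psi$ by $\min\{\psi,\sqrt{x(\log V)'(x)}\}$, so that $W\to\infty$ and your windows stay $o(N')$. Then set $\log W=\int(\log V)'/\psi$. With this $W$, your windowing and covering argument does go through.
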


The second implication in Theorem \ref{thm:s_implies_w_implies_bin} follows from \cite[Theorem 3.2.8]{Boos} and Lemma \ref{lem:bin_stronger_than_e_sqrt} below.
\begin{theorem}[{\cite[Theorem 3.2.8]{Boos}}]\label{thm:Boos}
    Let $W\in \mathscr{W}$ and let $(\alpha_{n,N})_{n,N\in \N}$ be a nonnegative doubly indexed sequence such that $\lim_{N\to\oo}\sum_{n\in \N}\alpha_{n,N}=1$. Then the following are equivalent:
    \begin{itemize}
        \item For each function $f\colon \N\rightarrow \C$ and each $\ell\in \C$, if $\lim_{N\to\oo}\E_{n\leq N}^Wf(n) = \ell$ then $\lim_{N\to\oo}\sum_{n\in \N}\alpha_{n,N}f(n) = \ell$.
        \item  $\sup_{N\to\oo} \sum_{n\in \N}W(n)\left|\frac{\alpha_{n,N}}{\Delta W(n)}-\frac{\alpha_{n+1,N}}{\Delta W(n+1)}\right|<\oo$, and for each $N\in \N$, $\lim_{n\to\oo}\frac{\alpha_{n,N}}{\Delta W(n)} = 0$.
    \end{itemize}
\end{theorem}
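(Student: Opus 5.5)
The plan is to reduce the statement to the Silverman--Toeplitz theorem by expressing the $\alpha$-transform of $f$ as a matrix transform of the sequence of $W$-means of $f$. Throughout I assume (after discarding finitely many indices, which changes nothing in the limits) that $\Delta W(n)>0$ for all $n$. Write
\[
t_n=\E^W_{k\le n}f(k),\qquad d_{n,N}=\frac{\alpha_{n,N}}{\Delta W(n)},\qquad c_{n,N}=W(n)\bigl(d_{n,N}-d_{n+1,N}\bigr).
\]
Since $\Delta W(n)f(n)=W(n)t_n-W(n-1)t_{n-1}$ and $\Delta W>0$, the map $f\mapsto t$ is a bijection. Thus an arbitrary convergent sequence $t$ arises as the $W$-means of some $f$, and the first bullet says exactly that the transform $t\mapsto\sum_n\alpha_{n,N}f(n)$ preserves limits.

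\textbf{Step 1 (summation by parts).} For finite $M$ I apply the summation by parts formula \eqref{eq:summation_by_parts} to get
\[
\sum_{n\le M}\alpha_{n,N}f(n)=\sum_{n\le M}d_{n,N}\bigl(W(n)t_n-W(n-1)t_{n-1}\bigr)=\sum_{n<M}c_{n,N}t_n+W(M)\,d_{M,N}\,t_M .
\]
So, once the boundary term vanishes as $M\to\infty$, we have $\sum_n\alpha_{n,N}f(n)=\sum_n c_{n,N}t_n$. The first bullet is then equivalent to the matrix $(c_{n,N})$ being regular in the Silverman--Toeplitz sense. That means three conditions:
\begin{itemize}
\item $\sup_N\sum_n|c_{n,N}|<\infty$;
\item $\sum_n c_{n,N}\to1$;
\item $c_{n,N}\to0$ for each fixed $n$.
\end{itemize}

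\textbf{Step 2 (the boundary term).} For sufficiency, assume the second bullet, so $d_{n,N}\to0$ as $n\to\infty$. Telescoping gives $d_{M,N}=\sum_{n\ge M}(d_{n,N}-d_{n+1,N})$. Since $W$ is eventually non-decreasing,
\[
W(M)|d_{M,N}|\le\sum_{n\ge M}W(n)|d_{n,N}-d_{n+1,N}|.
\]
This is a tail of a convergent series, so it tends to $0$ as $M\to\infty$. Hence the boundary term $W(M)d_{M,N}t_M$ vanishes for bounded $t$, and in particular for convergent $t$. The row-sum condition then follows by taking $f\equiv1$, which gives $t\equiv1$: we get $\sum_n c_{n,N}=\sum_n\alpha_{n,N}\to1$ by hypothesis. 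The uniform bound on $\sum_n|c_{n,N}|$ is literally the stated supremum condition.

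\textbf{Step 3 (necessity).} Assume the first bullet. Testing with $f=1_{\{n\}}$ gives $t_m\to0$, so $\alpha_{n,N}\to0$ for each fixed $n$, and hence $c_{n,N}\to0$. To obtain $d_{n,N}\to0$ as $n\to\infty$ for fixed $N$, I use the requirement that $\sum_n\alpha_{n,N}f(n)$ be defined for every $f$ whose $W$-means converge. Choosing $t$ with $|t|\le 1$ and a suitable sign pattern, the resulting $f$ must make the series converge, which forces the partial-sum expression of Step 1 to converge for all such $t$. The standard consequence is that $\sum_n|c_{n,N}|<\infty$ and the boundary coefficient $W(M)d_{M,N}$ tends to $0$; since $W\to\infty$, this gives $d_{M,N}\to0$. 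Finally, the uniform boundedness of the row norms comes from the Banach--Steinhaus argument built into Silverman--Toeplitz.

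The step I expect to be the main obstacle is the column condition $c_{n,N}\to0$ in the sufficiency direction. It is not visibly implied by the second bullet as stated; it amounts to $\alpha_{n,N}\to0$ for each fixed $n$. I would first check whether it follows from the remaining hypotheses. If it does not, I would add it explicitly, as is done in Boos's formulation. It is harmless in this paper's application, since binomial weights satisfy $\alpha_{n,N}\to0$ for each fixed $n$.
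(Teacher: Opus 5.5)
Your suspicion in the last paragraph is correct, and it needs to be settled rather than left open. The column condition does \emph{not} follow from the second bullet, so the implication from the second bullet to the first is false as the statement is printed, and no argument can close that step. Take $W(n)=n$, and set $\alpha_{n,N}=1$ if $n=1$ and $\alpha_{n,N}=0$ otherwise. Then $\sum_n\alpha_{n,N}=1$, the sum in the second bullet equals $W(1)\,|1-0|=1$ for every $N$, and $\alpha_{n,N}/\Delta W(n)=0$ for $n\geq 2$. Yet $f=1_{\{1\}}$ has $\E_{n\leq N}f(n)=1/N\to 0$, while $\sum_n\alpha_{n,N}f(n)=1$ for all $N$. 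In your notation $c_{1,N}=1$ for all $N$, so $(c_{n,N})$ violates exactly the Silverman--Toeplitz column condition.

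Your Step~3 shows that the first bullet forces $\lim_{N\to\infty}\alpha_{n,N}=0$ for each fixed $n$, so the correct theorem adds this condition to the second bullet. With it added, your reduction is the standard proof: summation by parts turns the $\alpha$-transform into a matrix transform of the $W$-means, and Silverman--Toeplitz finishes. Your Steps~1--2 then settle sufficiency completely. The paper gives no proof of its own; it only cites Boos--Cass. Its one use of the result, in Lemma~\ref{lem:bin_stronger_than_e_sqrt}, needs only the sufficiency direction, for $2^{-N}\binom{N}{n}$ and $2^{-N-1}\binom{N}{\lfloor n/2\rfloor}$. Both tend to $0$ for fixed $n$, so the application survives, but the statement should carry the extra hypothesis.

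Your necessity sketch also needs tightening.
\begin{itemize}
\item A sign pattern is not a convergent sequence, so you cannot feed it to the first bullet. Instead, fix $N$ and apply Banach--Steinhaus on the space $c$ of convergent sequences to the functionals $S_M(t)=\sum_{n<M}c_{n,N}t_n+W(M)d_{M,N}t_M$. By hypothesis these converge pointwise on $c$.
\item Their norms $\sum_{n<M}|c_{n,N}|+W(M)|d_{M,N}|$ are therefore bounded. This gives $\sum_n|c_{n,N}|<\infty$, but only \emph{boundedness} of $W(M)d_{M,N}$, not that it tends to $0$.
\item Boundedness already gives $d_{M,N}\to 0$, since $W\to\infty$. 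Only after that does your Step~2 tail estimate give $W(M)d_{M,N}\to 0$.
\item You need $W(M)d_{M,N}\to 0$ to identify $\sum_n\alpha_{n,N}f(n)$ with $\sum_n c_{n,N}t_n$. Only then can you invoke the necessity half of Silverman--Toeplitz to get $\sup_N\sum_n|c_{n,N}|<\infty$.
\end{itemize}

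Finally, $W\in\mathscr{W}$ is only eventually non-decreasing, so $\Delta W(n)=0$ may occur infinitely often. Your standing assumption $\Delta W>0$ cannot be obtained by discarding finitely many indices. It is an implicit hypothesis of the statement, needed for its quotients to make sense and for $f\mapsto t$ to be a bijection.
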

\begin{lemma}\label{lem:bin_stronger_than_e_sqrt}
   Let $f:\N\rightarrow \C$ be any function, let $\ell\in \C$ and let $W(x) = e^{\sqrt{x}}$. Suppose that $\lim_{N\to\oo}\E_{n\leq N}^Wf(n)=\ell$. Then $\lim_{N\to\oo}\E_{n\leq N}^{2\bin}f(n) = \lim_{N\to\oo}\E_{n\leq N}^{\bin}f(n)  = \ell$.
\end{lemma}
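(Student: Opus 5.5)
We will use \cref{thm:Boos}, applied with $W(x)=e^{\sqrt{x}}$ and with $\alpha_{n,N}=2^{-N}\binom{N}{n}$ for $0\le n\le N$, and $\alpha_{n,N}=0$ otherwise. The index $n=0$ carries weight $2^{-N}\to0$, so it is harmless.

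\textbf{Reduction to the binomial mean.} Once the binomial statement is proved, the $\E^{2\bin}$ statement follows by writing
\[
\E^{2\bin}_{n\le N}f(n)=\tfrac12\E^{\bin}_{n\le N}f(2n)+\tfrac12\E^{\bin}_{n\le N}f(2n+1).
\]
It then suffices to show that $\E^{W}$-convergence of $f$ to $\ell$ implies $\E^{W}$-convergence of $f(2\cdot)$ and of $f(2\cdot+1)$ to $\ell$. Rather than prove that, it is cleaner to apply \cref{thm:Boos} directly to the weights $\alpha'_{n,N}=2^{-N-1}\binom{N}{\lfloor n/2\rfloor}$. The estimates below are unchanged up to constants, because consecutive equal pairs only help the difference sum. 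So I verify the two conditions of \cref{thm:Boos} for both $\alpha$ and $\alpha'$.

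\textbf{Second condition.} For each fixed $N$, $\alpha_{n,N}=0$ for large $n$, so $\alpha_{n,N}/\Delta W(n)\to 0$ trivially.

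\textbf{Main condition: the bounded-variation sum.} Set $\phi_N(n)=\alpha_{n,N}/\Delta W(n)$. We need
\[
\sup_N\sum_n W(n)\,\bigl|\phi_N(n)-\phi_N(n+1)\bigr|<\infty .
\]
Since $\Delta W(n)\asymp \frac{e^{\sqrt n}}{2\sqrt n}$, we have $W(n)/\Delta W(n)\asymp 2\sqrt n$, and the sum is comparable to
\[
\sum_n 2\sqrt{n}\,\Bigl|\alpha_{n,N}-\alpha_{n+1,N}\tfrac{\Delta W(n)}{\Delta W(n+1)}\Bigr|.
\]
Now $\frac{\Delta W(n)}{\Delta W(n+1)}=1-\frac{1}{2\sqrt n}+O(1/n)$. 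Hence each term is at most
\[
2\sqrt n\,|\alpha_{n,N}-\alpha_{n+1,N}|+\alpha_{n+1,N}\bigl(1+O(n^{-1/2})\bigr).
\]
The second piece sums to $O(1)$. For the first piece, the binomial weights are unimodal, so $\sum_n|\alpha_{n,N}-\alpha_{n+1,N}|\le 2\max_n\alpha_{n,N}\asymp N^{-1/2}$. Multiplying by $\sqrt n\le\sqrt N$ gives $O(1)$.

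\textbf{Main obstacle.} The crux is exactly the matching of scales: the total variation of the binomial distribution is of order $N^{-1/2}$, while the factor $W/\Delta W$ is of order $\sqrt n$. This is why $e^{\sqrt x}$ is the borderline weight. I would make this rigorous by splitting the $n$-sum at the mode $N/2$, where the differences are monotone and the sum telescopes. The tail region $n\le N/4$ is handled crudely, since $\alpha_{n,N}$ is exponentially small there. I also need that the limit $\lim_N\sum_n\alpha_{n,N}=1$ holds, which is immediate.

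Having verified both conditions, \cref{thm:Boos} gives $\E^{\bin}_{n\le N}f(n)\to\ell$ and $\E^{2\bin}_{n\le N}f(n)\to\ell$. For the $2\bin$ weights, the pairs $n=2m,2m+1$ carry equal weight $\alpha'$. The jump between them is only the factor $\Delta W(2m)/\Delta W(2m+1)$, which contributes $O(\alpha')$, so the same bound holds.
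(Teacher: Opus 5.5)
Your proposal is correct and follows essentially the paper's route: \cref{thm:Boos} with the binomial weights, splitting off the ratio correction ($\Delta W(n)/\Delta W(n+1)$ for you, $W(n)/W(n+1)$ in the paper) as a summable $\Oh(\alpha_{n+1,N})$ error, and controlling the remaining variation by unimodality through the scale matching $\sqrt{N}\cdot N^{-1/2}=\Oh(1)$. Pulling out $W(n)/\Delta W(n)\ll\sqrt{N}$ and using the exact unimodality of $\binom{N}{n}$ is, if anything, slightly cleaner than the paper's appeal to unimodality of $\eta(n)$; also, the reduction you set aside is actually false (e.g.\ $f(n)=(-1)^n$ has $\E^W_{n\le N}f(n)\to 0$ while $f(2n)\equiv 1$), so treating the weights $2^{-N-1}\binom{N}{\lfloor n/2\rfloor}$ directly, as you do, is the right move.
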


\begin{proof}

We will apply Theorem \ref{thm:Boos} with $\alpha_{n,N} = \frac{1}{2^N}\binom{N}{n}$ and $W(x) = e^{\sqrt{x}}$. It is clear that we have $\lim_{n\to\oo}\frac{\alpha_{n,N}}{\Delta W(n)} = 0$ for each $N$, since $\alpha_{n,N}=0$ for all $n>N$. Next, we will consider 
\begin{align}\label{eq:messy_sum}
\sum_{n\in \N}W(n)\left|\frac{\alpha_{n,N}}{\Delta W(n)}-\frac{\alpha_{n+1,N}}{\Delta W(n+1)}\right| = \frac{1}{2^N}\sum_{n\in \N}\left|\frac{W(n)\binom{N}{n}}{\Delta W(n)}-\frac{W(n)\binom{N}{n+1}}{\Delta W(n+1)}\right| .
\end{align}

Note that $\frac{ W(n)}{ W(n+1)} = e^{\sqrt{n}-\sqrt{n+1}} = 1+O_{N\to\oo}(\Delta \sqrt{n+1}) = 1+O_{N\to\oo}(n^{-1/2})$.
Put $\eta(n)= \frac{W(n)\binom{N}{n}}{2^N\Delta W(n)}$ so that we have
\begin{align*}
   & \frac{1}{2^N}\sum_{n\in \N}\left|\frac{W(n)\binom{N}{n}}{\Delta W(n)}-\frac{W(n)\binom{N}{n+1}}{\Delta W(n+1)}\right| =\sum_{n\in \N}\left|\eta(n)-\eta(n+1)(1+O_{N\to\oo}(n^{-1/2}))\right|   \\
    \leq &\sum_{n\in \N}\left|\eta(n)-\eta(n+1)\right| +\sum_{n\in \N}\eta(n+1)\cdot O_{n\to\oo}(n^{-1/2}).
    \end{align*}
    The second sum is bounded since
    \begin{align*}
    &\sum_{n=1}^{\oo}\eta(n+1)\cdot O_{n\to\oo}(n^{-1/2}) = \sum_{n=1}^{\oo}\frac{1}{2^N}\binom{N}{n+1}\frac{W(n+1)}{\Delta W(n+1)}\cdot O_{n\to\oo} (n^{-1/2})\\
     =& \sum_{n=1}^{\oo}\frac{1}{2^N}\binom{N}{n+1}\cdot O_{n\to\oo}(n^{1/2})\cdot O_{n\to\oo} (n^{-1/2}) = O_{N\to\oo}(1).
    \end{align*}
      To bound the other sum above, note that the ratio $\frac{\eta(n+1)}{\eta(n)}\sim \frac{N-n}{n+1}$ uniformly in $N$. Since $\frac{N-n}{n+1}$ is decreasing in $n$ this shows that, when $n$ is large, $\eta(n)$ increases to its maximum and then decreases. So $\sum_{n\in \N}\left|\eta(n)-\eta(n+1)\right|
 \leq  2\cdot \sup_{n\in \N}\eta(n)$. We can bound $\sup_{n\leq N}\eta(n)$ by noting that $\binom{N}{n}\leq \frac{2^N}{\sqrt{\pi N}}$ for all $n$ and $\frac{W(n)}{\Delta W(n)}\leq \frac{W(N)}{\Delta W(N)} = 2\sqrt{N}\cdot (1+o_{N\to\oo}(1))$ for all $n\leq N$. In particular, $\max_{n\leq N}\eta(n) = O_{N\to\oo}(1)$, and so we can conclude that $\E_{n\leq N}^{\bin}f(n)=\ell$ by Theorem \ref{thm:Boos}.

 In order to prove that $\E_{n\leq N}^{2\bin}f(n)=\ell$, we can take $\alpha_{n,N}  = \frac{1}{2^{N+1}}\binom{N}{\floor{n/2}}$ and perform a similar calculation as above. 
 
\end{proof}

 Now that we have shown Theorem~\ref{thm:s_implies_w_implies_bin}, we will consider Theorem \ref{thm:w_ud}. Suppose that $\lim_{x\to\oo}\sqrt{x} |h'(x)-p(x)|=\infty$ for all $p(x)\in \Q[x]$ and there is a $q(x)\in \Q[x]$ such that $\lim_{x\to\infty}|h'(x)-q(x)|<\infty$. By L'H\^opital's rule, this means that $\lim_{x\to\oo}\frac{|h(x)-p(x)|}{\sqrt{x}}=\infty$ for all $p(x)\in \Q[x]$ and there is $Q(x)\in \Q[x]$ such that $\lim_{x\to\infty}\frac{|h(x)-Q(x)|}{x}<\infty$.

Let $\alpha,\beta\in\R\setminus \Q$, $k\in \Z\setminus \{0\}$, and let $W(n)=e^{\sqrt{n}}$. Put $r(n) = \beta(h(n)-Q(n))$ so that $r(n)$ grows faster than $\sqrt{n}$ and slower than $n$. Consider 
$$
\lim_{N\to\oo}\E_{n\leq N}^We^{2\pi i k \alpha \floor{r(n)} }.
$$
By Lemma~\ref{lem:change_of_variables} we have 
$$
\lim_{N\to\oo}\E_{n\leq N}^We^{2\pi i k \alpha \floor{r(n)} }=\lim_{N\to\oo}\E_{n\leq r(N)}^{W\circ r^{-1}}e^{2\pi i k \alpha n },
$$
but we know that $\lim_{N\to\oo}\E_{n\leq N}^{W\circ r^{-1}}e^{2\pi i k \alpha n }=0$ by Theorem \ref{thm:mikey_thm} and the fact that $\{n\alpha \}_{n\in \N}$ is well distributed mod 1.
So we have reduced Theorem \ref{thm:w_ud} to the following lemma.

\begin{lemma}
    Let $W(n) = e^{\sqrt{n}}$. Let $r\colon \N\rightarrow \N$ and suppose that 
    \begin{equation} 
    \lim_{N\to\oo}\E_{n\leq N}^We^{2\pi i \alpha \floor{\beta r(n)} }=0
    \end{equation}
    for all $\alpha,\beta\in \R\setminus \Q$. Then
    \begin{equation} 
    \lim_{N\to\oo}\E_{n\leq N}^We^{2\pi i k r(n) }=0
    \end{equation}
    for all $k\in \Z\setminus \{0\}$.
\end{lemma}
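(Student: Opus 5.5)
The idea is to approximate $e^{2\pi i k r(n)}$ uniformly in $n$ by exponentials of the form $e^{2\pi i \alpha\lfloor \beta r(n)\rfloor}$ with $\alpha,\beta$ irrational, and then let the approximation error go to zero. We read $r$ as real-valued, as in the application $r(n)=\beta(h(n)-Q(n))$. If $r$ were integer-valued, $e^{2\pi i k r(n)}$ would be identically $1$, so that reading cannot be intended.

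Fix $k\in\Z\setminus\{0\}$ and a large parameter $M\in\N$. Put $\beta=M\sqrt2$ and $\alpha=k/\beta=k/(M\sqrt 2)$; both are irrational. For every $n$ we write
\[
\beta r(n)=\lfloor \beta r(n)\rfloor+\theta_n, \qquad \theta_n\in[0,1).
\]
Multiplying by $k/\beta$ gives $k r(n)=\alpha\lfloor\beta r(n)\rfloor+k\theta_n/\beta$. Since $|e^{2\pi i x}-e^{2\pi i y}|\le 2\pi|x-y|$, we obtain the pointwise bound
\[
\bigl|e^{2\pi i k r(n)}-e^{2\pi i \alpha\lfloor \beta r(n)\rfloor}\bigr|\le \frac{2\pi |k|}{\beta}=\frac{2\pi|k|}{M\sqrt2},
\]
which is uniform in $n$.

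The weights $\Delta W(n)/W(N)$ are nonnegative and sum to $1$, so $\E^W_{n\le N}$ is a convex combination. Averaging the pointwise bound therefore gives
\[
\bigl|\E^W_{n\le N}e^{2\pi i k r(n)}\bigr|\le \bigl|\E^W_{n\le N}e^{2\pi i \alpha\lfloor\beta r(n)\rfloor}\bigr|+\frac{2\pi|k|}{M\sqrt2}.
\]
By hypothesis the first term on the right tends to $0$ as $N\to\infty$, because $\alpha$ and $\beta$ are irrational. Hence
\[
\limsup_{N\to\infty}\bigl|\E^W_{n\le N}e^{2\pi i k r(n)}\bigr|\le 2\pi|k|/(M\sqrt2).
\]
Letting $M\to\infty$ yields the conclusion.

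The only delicate point is choosing $\beta$ so that $\beta$ and $k/\beta$ are irrational simultaneously while $\beta\to\infty$; the choice $\beta=M\sqrt2$ handles this. The rest uses only $|e^{2\pi i x}-e^{2\pi i y}|\le 2\pi|x-y|$ and the fact that the weighted average is a convex combination; in particular, the specific choice $W(n)=e^{\sqrt n}$ plays no role.
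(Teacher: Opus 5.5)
Your proof is correct and is essentially the paper's argument. The paper fixes an irrational $\epsilon>0$ and takes $\alpha=k\epsilon$, $\beta=\epsilon^{-1}$, which is your choice $\beta=M\sqrt2$, $\alpha=k/\beta$ up to notation; it then uses the same uniform pointwise bound $2\pi|k|/\beta$ and sends $\beta\to\infty$. Your reading of $r$ as real-valued, with the stated codomain $\N$ a slip, also matches the paper's application $r(n)=\beta(h(n)-Q(n))$.
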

\begin{proof}

Let $k\in \Z\setminus \{0\}$ and pick any $\epsilon>0$ with $\epsilon\not\in \Q$. Write $r(n) = \epsilon(\epsilon^{-1}r(n)\text{ mod } 1)+\epsilon\floor{\epsilon^{-1}r(n)}$. Since $\epsilon(\epsilon^{-1}r(n)\text{ mod }1)\in [0,\epsilon)$ for all $n$, we have 
    \begin{align*}
    &\limsup_{N\to\oo}\left|\E_{n\leq N}^We^{2\pi i kr(n)}-\E_{n\leq N}^We^{2\pi i k \epsilon \floor{\epsilon^{-1}r(n)} }\right|\\
    =&\limsup_{N\to\oo}\left|\E_{n\leq N}^We^{2\pi i k \epsilon \floor{\epsilon^{-1}r(n)} }e^{2\pi i k \epsilon (\epsilon^{-1}r(n)\text{ mod }1) }-\E_{n\leq N}^We^{2\pi i k \epsilon \floor{\epsilon^{-1}r(n)} }\right|\\    =&\limsup_{N\to\oo}\left|\E_{n\leq N}^We^{2\pi i k \epsilon \floor{\epsilon^{-1}r(n)} }(e^{2\pi i k \epsilon (\epsilon^{-1}r(n)\text{ mod } 1) }-1)\right|\\
=&\limsup_{N\to\oo}\E_{n\leq N}^W|e^{2\pi i k \epsilon (\epsilon^{-1}r(n)\text{ mod } 1) }-1|\\
    \leq& \limsup_{n\to\oo} |e^{2\pi i k \epsilon (\epsilon^{-1}r(n)\text{ mod } 1) }-1|< 2\pi |k|\epsilon \to 0\text{ as }\epsilon\to 0. 
    \end{align*}
    So we are done.
\end{proof}

Before giving a proof of Theorem \ref{thm:s_ud}, recall van der Corput's trick.
\begin{theorem}[van der Corput's trick]\label{thm:vdc_trick}
Let $(x_n)_{n\in \N}$ be a bounded sequence of complex numbers and let $(I_N)_{N\in \N}$ be a sequence of intervals of natural numbers with $|I_N|\to\oo$ as $N\to\oo$. Suppose that for each $j\in \N$, $\E_{n\in I_N}(x_{n+j}\overline{x_n}) \to 0$ as $N\to\oo$. Then $\E_{n\in I_N} x_n \to 0$ as $N\to\oo$.
\end{theorem}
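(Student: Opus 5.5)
The plan is to prove van der Corput's trick for averages over the intervals $I_N$ by the standard Hilbert-space (Cauchy--Schwarz) argument, using only that $|I_N|\to\infty$ and $(x_n)$ is bounded, say $|x_n|\leq 1$. Fix $H\in\N$ and write $I_N=[a_N,b_N]$. The first step is to replace $x_n$ by its short average over a window of length $H$. Shifting the summation index by $h$ changes the sum $\sum_{n\in I_N}x_n$ by at most $2h$ boundary terms, so
\[
\E_{n\in I_N}x_n=\E_{n\in I_N}\frac{1}{H}\sum_{h=0}^{H-1}x_{n+h}+\Oh\Big(\frac{H}{|I_N|}\Big),
\]
and the error term vanishes as $N\to\infty$ for fixed $H$.

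Next I would apply Cauchy--Schwarz over $n\in I_N$:
\[
\Big|\E_{n\in I_N}\frac{1}{H}\sum_{h<H}x_{n+h}\Big|^2\leq \E_{n\in I_N}\Big|\frac{1}{H}\sum_{h<H}x_{n+h}\Big|^2
=\frac{1}{H^2}\sum_{h,h'<H}\E_{n\in I_N}x_{n+h}\overline{x_{n+h'}}.
\]
Shifting $n\mapsto n-h'$ again costs $\Oh(H/|I_N|)$, so each inner average equals $\E_{n\in I_N}x_{n+(h-h')}\overline{x_n}+\oh_{N\to\infty}(1)$ (with complex conjugation when $h<h'$). The diagonal terms $h=h'$ contribute at most $H\cdot\sup|x_n|^2/H^2=\Oh(1/H)$. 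Every off-diagonal term tends to $0$ as $N\to\infty$ by the hypothesis, applied with $j=|h-h'|$; for $h<h'$ it is the conjugate of the hypothesis expression. Since there are finitely many such pairs for fixed $H$, we get
\[
\limsup_{N\to\infty}\big|\E_{n\in I_N}x_n\big|^2\leq \frac{C}{H}.
\]
Letting $H\to\infty$ then finishes the proof.

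The main point requiring care is the bookkeeping of the shift errors. The normalization $\frac{1}{b-a}$ in the paper's notation $\E_{n\in[a,b]}$ differs from $\frac{1}{|I_N|}$ only by a factor $1+\oh(1)$. Each shift by at most $H$ changes a sum of bounded terms by $\Oh(H)$, which is negligible relative to $|I_N|$. No other obstacle is expected; the argument is entirely uniform in how the intervals $I_N$ move.
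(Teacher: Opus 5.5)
Your proof is correct. It is the standard Cauchy--Schwarz argument, and the bookkeeping is all in order:
\begin{itemize}
\item the boundary errors from each shift are of size $\Oh(H/|I_N|)$;
\item the diagonal terms contribute $\Oh(1/H)$;
\item the pairs with $h<h'$ are handled by conjugation, so only shifts $j\geq 1$ are needed;
\item the discrepancy between the paper's normalization $\frac{1}{b-a}$ and $\frac{1}{|I_N|}$ is only a factor $1+\oh(1)$.
\end{itemize}

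The paper takes a different route: it does not prove the statement at all. It only observes that the statement is the special case $G=\Z$, $F_N=I_N$, $H=\C$ of \cite[Theorem~2.12]{BM16}. That result is a van der Corput theorem for bounded Hilbert-space-valued families indexed by a countable amenable group and averaged along a F\o lner sequence. Intervals with $|I_N|\to\infty$ are F\o lner in $\Z$, since $|(I_N+d)\triangle I_N|\leq 2|d|$.

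The citation buys brevity and places the result in a much more general framework. Your argument is self-contained and elementary. It also makes explicit the small adjustments needed to match the precise form stated here, namely the hypothesis for positive shifts only and the nonstandard normalization. Finally, your argument works verbatim for bounded sequences in any Hilbert space.
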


\cref{thm:vdc_trick} is a special case of \cite[Theorem~2.12]{BM16} when $F_N=I_N$, $G=\Z$, and~$H=\C$.

\begin{corollary}\label{cor:vdc_trick}
Let $h\colon \R\rightarrow \R$ be a Hardy function of polynomial growth and let $(I_N)_{N\in \N}$ be a sequence of intervals of natural numbers with $|I_N|\to\oo$ as $N\to\oo$. Suppose that $\E_{n\in I_N}e^{2\pi i h'(n)} \to 0$ as $N\to\oo$. Then $\E_{n\in I_N} e^{2\pi i h(n)} \to 0$ as $N\to\oo$.
\end{corollary}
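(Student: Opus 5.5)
The argument I intend is the van der Corput trick (\cref{thm:vdc_trick}) applied to $x_n=e^{2\pi i h(n)}$, followed by a comparison of the differenced phases $h(n+j)-h(n)$ with multiples of $h'(n)$. In carrying this out I found that the corollary, with the hypothesis only for $e^{2\pi i h'(n)}$, is false as literally worded. So the plan is to exhibit the counterexample and then prove the strengthened form, which is the form actually needed downstream (in \cref{thm:s_ud} the hypothesis is available for $kh$ for every $k$).

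\emph{Counterexample.} Take $h(x)=x^2/4$, which is a Hardy field function of polynomial growth, and $I_N=[1,N]$. Then $h'(n)=n/2$, so $e^{2\pi i h'(n)}=(-1)^n$ and $\E_{n\le N}e^{2\pi i h'(n)}\to 0$. On the other hand, $e^{2\pi i h(n)}$ equals $1$ for $n$ even and $e^{2\pi i/4}=i$ for $n$ odd. Hence $\E_{n\le N}e^{2\pi i h(n)}\to (1+i)/2\neq 0$.

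The failure is structural. In \cref{thm:vdc_trick} the shift-$j$ correlation is $e^{2\pi i(h(n+j)-h(n))}$, which is governed by $jh'(n)$ rather than by $h'(n)$, so the hypothesis must control all integer multiples of $h'$. I would therefore prove the following statement.

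\emph{Corrected statement.} If $\E_{n\in I_N}e^{2\pi i\, j h'(n)}\to 0$ for every $j\in\N$, and $h''(x)\to 0$, then $\E_{n\in I_N}e^{2\pi i h(n)}\to 0$.

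\emph{Proof plan.} Fix $j\in\N$. By Taylor's theorem, $h(n+j)-h(n)=jh'(n)+\tfrac{j^2}{2}h''(\xi_n)$ for some $\xi_n\in[n,n+j]$. Since $h''\to 0$, we get $\bigl|e^{2\pi i(h(n+j)-h(n))}-e^{2\pi i jh'(n)}\bigr|\le \pi j^2\sup_{x\ge n}|h''(x)|$. This bound tends to $0$ as $n\to\infty$. If $\min I_N\to\infty$, it follows directly that $\E_{n\in I_N}x_{n+j}\overline{x_n}=\E_{n\in I_N}e^{2\pi i jh'(n)}+o(1)\to 0$. If $\min I_N$ does not tend to infinity, I still use $|I_N|\to\infty$: the finitely many terms with $n<M$ contribute at most $M/|I_N|$, and then I let $M\to\infty$. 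In either case, \cref{thm:vdc_trick} gives the conclusion.

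\emph{Regime where $h''\not\to 0$.} Here the Taylor remainder no longer vanishes, and this is the step I expect to be the main obstacle. Since $h$ is a Hardy field function of polynomial growth, $h''$ has a limit in $\R\cup\{\pm\infty\}$. The first thing I would try is induction on the smallest $d$ with $h^{(d)}\to 0$. The shifted differences $h(n+j)-h(n)$ are again Hardy functions, and they have one fewer derivative that does not tend to $0$. I would iterate \cref{thm:vdc_trick} $d-1$ times until the phase is $j_1\cdots j_{d-1}h^{(d-1)}(n)+o(1)$. At that point the hypothesis would have to be assumed for $e^{2\pi i\, m h^{(d-1)}(n)}$ with all $m\ne 0$; the counterexample above shows that some hypothesis of this stronger type is unavoidable. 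For the application in \cref{thm:s_ud}, condition \eqref{eq:s_ud} and the polynomial growth of $h$ allow one to subtract a rational polynomial. This reduces to $h''\to 0$ up to periodic rational phases, and those are handled by splitting $n$ into residue classes.
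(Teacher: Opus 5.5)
Your counterexample is correct, so the corollary is false as written. For $h(x)=x^2/4$ and $I_N=\{1,\dots,N\}$ one has $e^{2\pi i h'(n)}=(-1)^n$, whose averages tend to $0$. On the other hand, $e^{2\pi i h(n)}$ equals $1$ for even $n$ and $i$ for odd $n$, so its averages tend to $(1+i)/2$.

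The paper gives no proof beyond placing the statement after \cref{thm:vdc_trick}. The intended argument is evidently yours: van der Corput with $x_n=e^{2\pi i h(n)}$ and $h(n+j)-h(n)\approx jh'(n)$. It fails at exactly the two points you isolate:
\begin{enumerate}
\item the shift-$j$ correlations see $jh'$, not $h'$;
\item the remainder $\tfrac{j^2}{2}h''(\xi_n)$ is harmless only when $h''$ is bounded.
\end{enumerate}
Your corrected statement and its proof are fine. You can even weaken $h''\to0$ to $h''\to c\in\R$, which for a Hardy function just means $h''$ is bounded. Then $e^{2\pi i(h(n+j)-h(n))}=e^{\pi i j^2 c}\,e^{2\pi i jh'(n)}+o(1)$, and the constant phase factors out.

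What does not hold up is your last paragraph. The corollary is used only in Case 3 of \cref{thm:s_ud}, to climb from $h^{(m-2)}$ back to $h$. It is therefore applied to $h^{(m-3)},\dots,h',h$, and all of their second derivatives tend to $\oo$. Subtracting a rational polynomial cannot change this. For example, $h(x)=x^{7/2}$ satisfies \eqref{eq:s_ud} and has $m=3$, yet $(h-P)''\to\pm\oo$ for every $P\in\Q[x]$.

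Your iterated van der Corput alternative is also too lossy for the intervals $I_N=[N-s(N),N]$ that occur there, since only $s(N)\,h^{(m+1)}(N)^{1/3}\to\oo$ is available. Take $h(x)=x^{7/2}$ and $s(N)=\lfloor N^{1/3}\rfloor$. Then $h'''$ varies by $\Oh(N^{-1/6})$ on $I_N$, so $|\E_{n\in I_N}e^{2\pi i\, k h'''(n)}|\to1$ for every $k$. Nevertheless $\E_{n\in I_N}e^{2\pi i h(n)}\to0$ in this example.

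What does work is the following. Write $D_jh(x)=h(x+j)-h(x)$ and $g=D_{j_1}\cdots D_{j_{m-2}}h$. Apply \cref{thm:vdc_trick} to $e^{2\pi i h(n)}$ itself $m-2$ times, and then estimate $e^{2\pi i g(n)}$ directly with the third-derivative bound \eqref{eq:vdc_2.6}. Here $g'''(x)$ is an average of $j_1\cdots j_{m-2}\,h^{(m+1)}$ over $[x,x+j_1+\dots+j_{m-2}]$, so it is comparable to $h^{(m+1)}(N)$ on $I_N$. Hence the Case 2 computation applies to $g$ with the same $V$, and the corollary is bypassed entirely.
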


Now we are ready to give a proof of \cref{thm:s_ud}.
\begin{proof}[Proof of \cref{thm:s_ud}]
By replacing $h(x)$ with $k(h(x)-\int_0^x p(t)dt)$ if necessary, we can assume without loss of generality that $k=1$ and $p(x)=0$. Additionally, assume that $h$ eventually increases to $\oo$.

Let $m\geq 1$ be such that $\lim_{x\to\oo}\frac{h(x)}{x^m}=\oo$ and $\lim_{x\to\oo}\frac{h(x)}{x^{m+1}}<\oo$. 
We proceed by considering cases. 
\begin{itemize}
    \item Case 1: $m=1$,
    \item Case 2: $m=2$
    \item Case 3: $m\geq 3$.
\end{itemize}

We can begin by observing that Case 3 reduces to Case 2. Indeed, if $\lim_{x\to\oo}\frac{h(x)}{x^m}=\oo$ and $\lim_{x\to\oo}\frac{h(x)}{x^{m+1}}<\oo$, then $\lim_{x\to\oo}\frac{h^{(m-2)}(x)}{x^2}=\oo$ and $\lim_{x\to\oo}\frac{h^{(m-2)}(x)}{x^{3}}<\oo$ and so we may apply Case 2 to $h^{(m-2)}$ and invoke Corollary \ref{cor:vdc_trick}.

Now we will turn our attention to Case 1. Suppose that $m=1$. Applying L'H\^opital's rule to equation (\ref{eq:s_ud}) gives us that $\lim_{x\to\oo}\frac{h'(x)}{\log x}=\lim_{x\to\oo}x\cdot h''(x)=\oo$. Let 
\[
V(N) = e^{\int_{n=1}^N{\sqrt{h''(n)}}}
\]
so that $\Delta\log(V(N)) = \sqrt{h''(N)}\cdot (1+o_{N\to\oo}(1))$ and 
\[
\lim_{N\to\oo}\frac{\sqrt{N}}{\log(V(N))} = \lim_{N\to\oo}\frac{\Delta \sqrt{N}}{\Delta \log(V(N)) }=\lim_{N\to\oo}\frac{\frac{1}{2\sqrt{N}}}{\sqrt{h''(N)}} =\lim_{N\to\oo}\frac{1}{2\sqrt{N\cdot h''(N)} }=0.
\]

Let $s\colon \N\rightarrow \N$ be any function with $s(N)\leq N-1$ for all $N\in \N$ and $\lim_{N\to\oo}s(N)\cdot \Delta \log(V(N)) = \oo$ so that $\lim_{N\to\oo}s(N)\sqrt{h''(N)}=\oo$. We will show that 
\begin{equation} \label{eq:goal}
\lim_{N\to\oo}\frac{1}{s(N)}\sum_{n=N-s(N)}^{N}e^{2\pi i  h(n)}=0.
\end{equation}

If $\lim_{N\to\oo}\frac{s(N)}{N}\in (0,1]$ then (\ref{eq:goal}) holds because $(h(n))_{n\in \N}$ is uniformly distributed in the usual sense, so suppose that $s(N) = o_{N\to\oo}(N)$. 

Theorem 2.2 in \cite{GK91} says that if $h$ is a smooth function and $I$ is an interval with $\lambda \leq |h{''}(x)|\leq \alpha \lambda$ for $x\in I$ then
\begin{equation}\label{eq:vdc_2.2}
    \frac{1}{|I|}\left|\sum_{n\in I}e^{2\pi i k h(n)}\right| \leq C\left(\alpha\ \lambda^{1/2}+|I|^{-1}\lambda^{-1/2}\right)
\end{equation}
for some uniform constant $C>0$.

Put $\lambda = h''(N)$ and $\alpha = \frac{h''(N-s(N))}{h''(N)}$ and $I= [N-s(N),N]$ for $N\in \N$. We have that $\lambda\to 0$ as $N\to\oo$ and that $\lim_{N\to\oo}\alpha<\oo$ since $s(N) = o_{N\to\oo}(N)$ and $h''(x)$ tends to $0$ slower than $x^{-1}$. So $\alpha \cdot \lambda^{1/2}\to 0$ as $N\to\oo$.

Additionally, $|I|\cdot \lambda^{1/2} = s(N)\cdot \sqrt{h''(N)}\to \oo$ as $N\to\oo$. So, the right-hand side of equation (\ref{eq:vdc_2.2}) tends to $0$ as $N\to\oo$, so it follows that $\left|\E_{n\in I}e^{2\pi i h(n)} \right|=    \frac{1}{|I|}\left|\sum_{n\in I}e^{2\pi i h(n)}\right| \to 0$ as $N\to\oo$. This completes the proof for Case 1.

Lastly, consider Case 2. In this case, we have $\lim_{x\to\oo}\frac{h(x)}{x^2}=\oo$. Then by L'H\^opital's rule, we also have
\[
\oo=\lim_{x\to\oo}\frac{h(x)}{x^{3/2}} = \lim_{x\to\oo}\frac{h'(x)}{\frac{3}{2}x^{1/2}}  = \lim_{x\to\oo}\frac{h''(x)}{\frac{3}{4}x^{-1/2}} = \lim_{x\to\oo}\frac{h'''(x)}{\frac{-3}{8}x^{-3/2}} =\lim_{x\to\oo}\frac{-8}{3}h'''(x)x^{3/2} .
\]

Let 
\[
V(N) = e^{\int_{n=1}^N{\sqrt[3]{h'''(n)}}}
\]
so that $\Delta\log(V(N)) = \sqrt[3]{h'''(N)}\cdot(1+o_{N\to\oo}(1))$ and 
\[
\lim_{N\to\oo}\frac{\sqrt{N}}{\log(V(N))} = \lim_{N\to\oo}\frac{\Delta \sqrt{N}}{\Delta \log(V(N)) }=\lim_{N\to\oo}\frac{\frac{1}{2\sqrt{N}}}{\sqrt[3]{h'''(N)}} =\lim_{N\to\oo}\frac{1}{2\sqrt[3]{N^{3/2}\cdot h'''(N)} }=0.
\]

Let $s\colon \N\rightarrow \N$ be any function with $s(N)\leq N-1$ for all $N\in \N$ and $\lim_{N\to\oo}s(N)\cdot \Delta \log(V(N)) = \oo$ so that $\sqrt[4]{h'''(N)\cdot s(N)^3}\to \oo$. 
Theorem 2.6 in \cite{GK91} says that if $h$ is a smooth function and $I$ is an interval with $\lambda \leq |h'''(x)|\leq \alpha \lambda$ for $x\in I$ then

\begin{equation}\label{eq:vdc_2.6}
    \frac{1}{|I|}\left|\sum_{n\in I}e^{2\pi i h(n)}\right| \leq C\left(\alpha^{1/3} \lambda^{1/6}+\alpha^{1/4}|I|^{-1/4}+\lambda^{-1/4}|I|^{-3/4}\right)
\end{equation}
for some uniform constant $C>0$. 

Put $\lambda = h^{'''}(N)$ and $\alpha = \frac{h^{(3)}(N-s(N))}{h^{'''}(N)}$ and $I=I(N) = [N-s(N),N]$ for $N\in \N$. We have that $\lim_{N\to\oo}\alpha<\oo$ and so $\alpha^{1/3} \lambda^{1/6}+\alpha^{1/4}|I|^{-1/4}\to 0$ as $N\to\oo$. Also, $\lambda^{1/4}|I|^{3/4} = \sqrt[4]{ h^{(3)}(N)\cdot s(N)^3} \to \oo$ as $N\to\oo$.

The right-hand side of equation (\ref{eq:vdc_2.6}) tends to $0$ as $N$ tends to $\oo$, so it follows that $\left|\E_{n\in I_N}e^{2\pi i h(n)}\right| = \frac{1}{|I|}\left|\sum_{n\in I}e^{2\pi i h(n)}\right| \to 0$ as $N\to\oo$. This concludes the proof of Case 2, and so we are done.
\end{proof}

It remains to show that if any of conditions (1), (3), or (4) hold then $h(n)$ is not uniformly distributed mod $1$ with respect to $\E^{2\bin}$ averages.

Suppose that condition (1) holds. There is a Tauberian theorem in \cite[Theorem 157, p.~221]{H49}\footnote{In \cite{H49} the limit $\lim_{N\to\oo}\E^{\bin}_{n\leq N}$ is referred to as the $(E,1)$ method} which says that if $\lim_{N\to\oo}\E_{n\leq N}^{\bin}\sum_{j=1 }^na_j = \ell$ and $a_n = o(n^{-1/2})$ then $\lim_{n\to\oo}\sum_{j=1}^na_j = \ell$. Taking $a_n = (e^{2\pi i k h(2n)}-e^{2\pi i k h(2n-2)}+e^{2\pi i k h(2n+1)}-e^{2\pi i k h(2n-1)})/2$ gives that $a_n = o(n^{-1/2})$
and $\sum_{j=1}^na_j = (e^{2\pi i kh(2n)}+e^{2\pi i kh(2n+1)}- e^{2\pi i kh(0)}-e^{2\pi i kh(1)})/2$ which does not tend to $0$ as $n\to\oo$. So it follows that 
\[
\lim_{N\to\oo}\E_{n\leq N}^{2\bin}e^{2\pi i k h(n)}=\lim_{N\to\oo}\E_{n\leq N}^{\bin}\sum_{j=1 }^na_j \neq 0.
\]

Next, we can consider the case where condition (3) holds.

\begin{lemma}\label{lem:tangent_line} Let $I = [a,b]$ with $a$ arbitrarily large and let $h$ be a hardy function with $\lim_{x\to\oo}\frac{|h(x)|}{x}=\oo$ and $\lim_{x\to\oo}\frac{h(x)}{x^2}=0$. Let $c\in [a,b]$, and let $L_c(x) = h'(c)(x-c)+h(c)$ be the tangent line to $h$ at $x=c$. Then $|e^{2\pi i h(n)}-e^{2\pi i L_c(n)}|\leq 2\pi |I|^2h''(a)$ for all $n\in I$. Notably, this bound is independent of $c$.
\end{lemma}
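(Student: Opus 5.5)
The plan is to use Taylor's theorem with the Lagrange remainder together with the elementary bound $|e^{2\pi i u}-e^{2\pi i v}|\leq 2\pi|u-v|$. For $n\in I$ and $c\in I$, Taylor expansion of $h$ about $c$ gives
\[
h(n)=L_c(n)+\tfrac12 h''(\xi)(n-c)^2
\]
for some $\xi$ between $n$ and $c$, so $\xi\in I$. Since $|n-c|\leq |I|$, this yields
\[
|e^{2\pi i h(n)}-e^{2\pi i L_c(n)}|\leq 2\pi\cdot \tfrac12 |h''(\xi)|\,|I|^2 .
\]
It then remains only to bound $|h''(\xi)|$ by $h''(a)$ uniformly over $\xi\in[a,b]$. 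The resulting estimate does not depend on $c$, which is the claimed uniformity.

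The main step is the monotonicity and sign of $h''$ for large arguments, using the growth hypotheses and Hardy field structure. Replacing $h$ by $-h$ if needed (this changes both exponentials by complex conjugation and leaves the modulus unchanged), we may assume $h\to+\infty$.

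Hardy field functions are eventually monotone and of eventually constant sign, and so are their derivatives. From $h(x)/x\to\infty$, L'Hôpital gives $h'(x)\to\infty$, so $h'$ is eventually increasing and $h''\geq 0$ eventually. From $h(x)/x^2\to 0$, L'Hôpital twice gives $h''(x)\to 0$. Since $h''$ is eventually monotone, nonnegative, and tends to $0$, it is eventually non-increasing.

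Hence for $a$ sufficiently large (this is the meaning of "$a$ arbitrarily large") we have $0\leq h''(\xi)\leq h''(a)$ for all $\xi\geq a$. This gives the bound $\pi|I|^2h''(a)\leq 2\pi|I|^2h''(a)$.

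I expect no real obstacle. The only care needed is justifying the L'Hôpital steps, where the limits exist because we are in a Hardy field, and noting that the threshold for $a$ depends only on $h$ and not on $I$ or $c$.
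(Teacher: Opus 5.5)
Your proposal is correct and is essentially the paper's proof: the chord inequality $|e^{2\pi i u}-e^{2\pi i v}|\le 2\pi|u-v|$ combined with Taylor's theorem and the bound $|h''(\xi)|\le h''(a)$ on $I$. You also usefully spell out, via L'H\^opital in the Hardy field, why $h''$ is eventually nonnegative and non-increasing, which the paper leaves implicit. One small point: the reduction via $-h$ actually gives the bound with $|h''(a)|$ rather than $h''(a)$, so as literally stated the lemma needs $h\to+\infty$ (otherwise the right-hand side is negative), which is exactly the situation in which the paper applies it.
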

\begin{proof}
    Recall the chord inequality, which says that $|e^{i\theta}-e^{i\phi}|\leq |\theta-\phi|$ for all $\theta,\phi\in \R$. So 
\[
|e^{2\pi i h(n)}-e^{2\pi i L_c(n)}|\leq 2\pi |h(n)-L_c(n)| =  2\pi |h(n)-h(c)-h'(c)(n-c)|.
\]
By Taylor's theorem $|h(n)-h(c)-h'(c)(n-c)|\leq (n-c)^2h''(a)/2\leq |I|^2h''(a)$.
\end{proof}

\begin{theorem} Let $h$ be a Hardy function with polynomial growth and suppose that $\lim_{x\to\oo}\frac{|h(x)-p(x)|}{x}=\oo$ for all $p(x)\in \Q[x]$ and there exists $q(x)\in \Q[x]$ such that $\lim_{x\to\oo}\frac{|h(x)-q(x)|}{x\log(x)}=0$. Then $\lim_{N\to\oo}\E_{n\leq N}^{\bin}e^{2\pi i h(n)}\neq 0$. 
\end{theorem}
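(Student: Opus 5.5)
The idea is to exploit the concentration of the binomial mean. Throughout, $e(x)=e^{2\pi i x}$ is shorthand used only in this sketch. The weights $2^{-N}\binom{N}{n}$ put all but $o(1)$ of their mass on the window $I_N=[N/2-C_N\sqrt N,\,N/2+C_N\sqrt N]$, for any $C_N\to\infty$; this is the central limit theorem argument already used in the proof of Theorem~\ref{thm:BC_of_C}. The plan is to show that on $I_N$ the sequence $e(h(n))$ agrees, up to a uniformly small error, with a periodic sequence times a linear phase. The binomial mean of such a sequence can then be computed exactly, and we will choose a subsequence of $N$ along which it stays bounded away from $0$.

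\textbf{Step 1: reduction to a remainder $r$.} Write $h=q+r$ with $q\in\Q[x]$ as in the hypothesis. Then $r$ is a Hardy function with $r(x)/x\to\pm\infty$ (since $|h-p|/x\to\infty$ for all $p\in\Q[x]$) and $r(x)=o(x\log x)$. Replacing $h$ by $-h$ if necessary, I assume $r\to+\infty$. By L'H\^opital, $r'(x)\to\infty$ and $r'(x)=o(\log x)$, hence $r''(x)>0$ eventually and $x\,r''(x)\to 0$, i.e.\ $r''(x)=o(1/x)$.

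\textbf{Step 2: linearization of $r$ on $I_N$.} Apply Lemma~\ref{lem:tangent_line} to $r$ (not to $h$) on $I_N$ with $c=N/2$. It gives $|e(r(n))-e(L_c(n))|\le 2\pi|I_N|^2 r''(a)\ll C_N^2\,N\,r''(N/3)$, uniformly for $n\in I_N$. Since $N r''(N/3)\to0$, we may let $C_N\to\infty$ slowly enough that this error is $o(1)$. Consequently
\[
\E^{\bin}_{n\le N}e(h(n))=\frac{e(r(c)-c\,r'(c))}{2^N}\sum_{n}\binom Nn e(q(n))\,e(r'(c)n)+o(1).
\]
The tail outside $I_N$ is reinserted at cost $o(1)$.

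\textbf{Step 3: exact computation of the main term.} Let $d$ be a common denominator of the coefficients of $q$. Then $e(q(n))$ is $d$-periodic, so $e(q(n))=\sum_{j\bmod d}b_j\,e(jn/d)$ with not all $b_j=0$; fix $j_0$ with $b_{j_0}\neq0$. The binomial theorem gives $2^{-N}\sum_n\binom Nn e(\beta n)=e(N\beta/2)\cos(\pi\beta)^N$, so the main term in Step 2 equals
\[
\sum_j b_j\, e\bigl(N(\alpha+j/d)/2\bigr)\cos\bigl(\pi(\alpha+j/d)\bigr)^N,\qquad \alpha=r'(N/2).
\]
Now choose $N$ along a subsequence for which $\alpha+j_0/d$ lies within $o(N^{-1/2})$ of an integer. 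This is possible because $r'\to\infty$ continuously and $r'((N+1)/2)-r'(N/2)\le r''(N/3)=o(1/N)$, so $r'(N/2)$ passes through each target value $m-j_0/d$ with integer steps in $N$ of size $o(1/N)$. For such $N$ the $j_0$ term has modulus $|b_{j_0}|(1-o(1))$. For every other $j$, $\alpha+j/d$ stays at distance $\ge 1/(2d)$ from $\Z$, so its term is $O(\cos(\pi/(2d))^N)\to0$. Hence $|\E^{\bin}_{n\le N}e(h(n))|\to|b_{j_0}|>0$ along this subsequence, and the limit is not $0$.

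\textbf{Main obstacle.} I expect the delicate part to be the bookkeeping in Step 2. The growth condition $r=o(x\log x)$ must give enough room that $|I_N|^2 r''\to0$ with the window still wide enough ($C_N\to\infty$) to capture the binomial mass. It must also give the density of the values $r'(N/2)$ used in Step 3. Both needs come from the single estimate $x r''(x)\to0$, so that is the estimate to verify carefully. The periodic factor from $q$ is handled cleanly by the finite Fourier expansion, which is why we choose $j_0$ with $b_{j_0}\ne0$ rather than insisting on $j_0=0$.
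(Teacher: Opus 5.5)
Your proof is correct. Its skeleton matches the paper's:
\begin{enumerate}
\item restrict the binomial weights to a window of width $\asymp\sqrt N$ around $N/2$;
\item linearize on that window with Lemma~\ref{lem:tangent_line}, using $x\,r''(x)\to 0$;
\item pass to those $N$ for which the slope is essentially an integer.
\end{enumerate}

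The two routes differ in how the last step is carried out. The paper sets $q=0$ ``without loss of generality'' and keeps the window constant $C$ fixed. It then picks a tangent point $c$ inside the window with $h'(c)\in\Z$, so $e^{2\pi i L_c(n)}$ is literally constant on the integers. This gives $\limsup_N|\E^{\bin}_{n\le N}e^{2\pi i h(n)}|>1-2\epsilon$ with no computation. You instead fix $c=N/2$ and let $C_N\to\infty$. You then evaluate the linearized main term exactly, using $2^{-N}\sum_{n}\binom{N}{n}e^{2\pi i\beta n}=e^{\pi i N\beta}\cos(\pi\beta)^N$ together with the finite Fourier expansion of the $d$-periodic factor $e^{2\pi i q(n)}$.

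What your route buys is a genuine treatment of $q$. The reduction to $q=0$ is only immediate after replacing $h$ by $kh$ with $d\mid k$. That suffices for the non-equidistribution part of Theorem~\ref{thm_ud_of_Hardy_functions_along_2bin_means}, but not for the $k=1$ statement as written. In fact the paper's intermediate claim $\limsup=1$ can fail when $q\notin\Z[x]$. Your formula shows the $\limsup$ is exactly $\max_j|b_j|$. For example, take $q(x)=x^2/4$ and $r(x)=x\log\log x$: then $e^{2\pi i q(n)}$ equals $1$ for $n$ even and $i$ for $n$ odd, so the $\limsup$ is $1/\sqrt2$.

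The paper's argument is shorter and needs no exact binomial identity. Yours proves the theorem as stated for every $q\in\Q[x]$.
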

\begin{proof}

We will show that $\limsup_{N\to\oo}|\E_{n\leq N}^{\bin}e^{2\pi i h(n)}|=1$. Without loss of generality assume that $q(x)=0$ and $h$ increasing to $\oo$.

Let $\epsilon>0$ and pick $C>0$ such that for $ I = [N/2-C\sqrt{N}, N/2+C\sqrt{N}]$ we have $\liminf_{N\to\oo}\frac{1}{2^N}\sum_{n\in I}\binom{N}{n}>1-\epsilon$.

It follows that 
\begin{equation}\label{3}
\left|\frac{1}{2^N}\sum_{n=1}^N\binom{N}{n}e^{2\pi i h(n)}-\frac{1}{2^N}\sum_{n\in I}\binom{N}{n}e^{2\pi i h(n)}\right|<\epsilon.
\end{equation}

Note that $h'$ eventually increases to $\oo$. So, for infinitely many values of $N$ we can find a real number $c\in I$ such that $h'(c)\in \Z$. Pick a large enough $N$ such that there is a value $c\in I$ with $h'(c)\in \Z$. Let $L_c(x) = h'(c)(x-c) + h(c)$ be the tangent line to $h$ at $x=c$. Note that $e^{2\pi i L_c(n)}$ is constant for all integers $n\in I$ and so 
\begin{equation}\label{1}
\left|\frac{1}{2^N}\sum_{n\in I}\binom{N}{n}e^{2\pi i L_c(n)}\right|=\frac{1}{2^N}\sum_{n\in I}\binom{N}{n}>1-\epsilon.
\end{equation}

By Lemma \ref{lem:tangent_line}, $|e^{2\pi i h(n)}-e^{2\pi i L_c(n)}|\leq 2\pi |I|^2h''(N/2-C\sqrt{N})$ for all $n\in I$. In particular, $|I|^2 = 4C^2N$ and by using L'H\^opital's rule on the hypothesis of the theorem, we have that $\lim_{x\to\oo}h''(x)\cdot x =0$. It follows that $\lim_{N\to\oo}|I|^2h''(N/2-C\sqrt{N}) = 0$. Then
\begin{equation}\label{2}
\lim_{N\to\oo}\left|\frac{1}{2^N}\sum_{n\in I}\binom{N}{n}e^{2\pi i h(n)}-\frac{1}{2^N}\sum_{n\in I}\binom{N}{n}e^{2\pi i L_c(n)}\right|=0.
\end{equation}

Combining (\ref{3}), (\ref{1}), and (\ref{2}) we get that $\limsup_{N\to\oo}\left|\E_{n\leq N}^{\bin}e^{2\pi i h(n)}\right|>1-2\epsilon$.
\end{proof}

Lastly, consider the case where condition (4) holds.

\begin{lemma}
\label{lem_dioph_approx_of_f}
Suppose that $h$ is a Hardy function such that $\lim_{x\to\oo}\frac{h(x)}{x\log(x)}>0$ and there exists $0<\lim_{x\to\oo}\frac{h(x)}{x\log(x)}<\oo$. There exist infinitely many $N\in\N$ such that
\[
\|h'(N)\|_{\Z}\leq h''(N),
\]
where $\|x\|_{\Z}$ denotes the distance from $x$ to the closest integer.
\end{lemma}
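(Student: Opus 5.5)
We use that, under the hypothesis, $h(x)=c\,x\log x+o(x\log x)$ for some $c\neq 0$; replacing $h$ by $-h$ if needed, take $c>0$. Since $h$ is a Hardy field function with polynomial growth, we may differentiate the asymptotic relation via L'H\^opital's rule. This gives $h'(x)\sim c\log x$, so $h'$ eventually increases to $\infty$, and $h''(x)\sim c/x$. In particular $h''$ is eventually positive and decreasing, and $h''(x)\to 0$.

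The plan is to find integers $N$ near the points where $h'$ crosses an integer. Because $h'$ increases to $\infty$, for every sufficiently large integer $m$ there is a unique real $x_m$ with $h'(x_m)=m$, and $x_m\to\infty$. Put $N=\lfloor x_m\rfloor$. By the mean value theorem,
\[
m-h'(N)=h'(x_m)-h'(N)=h''(\xi)\,(x_m-N)
\]
for some $\xi\in[N,x_m]$. This yields $0\le m-h'(N)\le h''(\xi)\le h''(N)$, using that $h''$ is eventually decreasing and $0\le x_m-N<1$. Hence $\|h'(N)\|_{\Z}\le |h'(N)-m|\le h''(N)$.

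It remains to check that infinitely many distinct $N$ arise. This is clear because $x_m\to\infty$, so $N=\lfloor x_m\rfloor\to\infty$. In fact the $x_m$ are spaced roughly $e^{m/c}$ apart, so distinct $m$ eventually give distinct $N$.

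The only delicate point is justifying the eventual monotonicity and positivity of $h'$ and $h''$. Here we use that $h'$ and $h''$ lie in the same Hardy field, hence are eventually monotone and of constant sign. Then L'H\^opital applied to $h(x)/(x\log x)$, and next to $h'(x)/\log x$, gives the asymptotics above: the limits exist in a Hardy field, so the converse direction of L'H\^opital is legitimate. Everything else is elementary.
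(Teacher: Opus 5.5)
Your proof is correct and is essentially the paper's argument. The paper picks $N$ with $h'(N)<m\le h'(N+1)$ and applies the mean value theorem on $[N,N+1]$ together with the eventual monotonicity of $h''$. This is the same as your choice $N=\lfloor x_m\rfloor$ with the mean value theorem on $[N,x_m]$.
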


\begin{proof}
Let $m$ be a large positive integer. $h'$ eventually increases to $\oo$ and so we can pick $N\in\N$ such that $h'(N)$ is smaller than $m$, but $h'(N+1)$ is bigger or equal than $m$.
By using the mean value theorem, we can note that the inequality
\[
|h'(N+1)-h'(N)|= |h''(c)|\leq h''(N) \text{ for some } c\in (N,N+1)
\]
holds for all but at most finitely many $N\in\N$, since $h''$ is eventually decreasing. Since $m$ lies between $h'(N)$ and $h'(N+1)$, it follows that
\[
|m-h'(N)|\leq h''(N).
\]
This completes the proof.
\end{proof}

\begin{lemma}
\label{lem_binomial_averages_of_nlogn}
Suppose that $h$ is a Hardy function such that $\lim_{x\to\oo}\frac{|h(x)-p(x)|}{x\log(x)}>0$ for all $p(x)\in \Q[x]$ and there exists $q\in \Q[x]$ such that $0<\lim_{x\to\oo}\frac{|h(x)-q(x)|}{x\log(x)}<\oo$. Let $k\in \Z\setminus \{0\}$. Then there exists a constant $A\in \R$ such that for every $\epsilon>0$ and infinitely many $N\in\N$,
\begin{equation}\label{eq:nlogn_approx}
\E^{2\bin}_{n \le N} e^{2\pi i kh(n)}= e^{2\pi i kh(N)}\cdot \Bigg(\frac{1}{\sqrt{2\pi}}\int_{-\infty}^\infty e^{-\frac{x^2}{2}+A\pi i x^2} \d x \Bigg)+\Oh(\epsilon).
\end{equation}
\end{lemma}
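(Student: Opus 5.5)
Throughout I write $e(x)=e^{2\pi i x}$. The plan is to recenter the $\E^{2\bin}$ average at its mean, replace the binomial weights by a Gaussian, and Taylor expand $h$ to second order around $N$. The linear term is then killed by choosing $N$ from \cref{lem_dioph_approx_of_f}, and the quadratic term produces the Gaussian integral in the statement, with $A=ka$ where $a=\lim_{x\to\oo}x h''(x)$.

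First, as in the earlier proofs of this section, I would reduce to $q=0$, so that $0<\lim h(x)/(x\log x)=a<\oo$. By L'H\^opital's rule, $xh''(x)\to a$ and $x^2h'''(x)\to -a$. The step I am least sure of is this reduction: $e(kq(n))$ is periodic but not removable in general. If it cannot be absorbed, I would additionally restrict $N$ to a fixed residue class modulo the denominator of $q$ and split the sum over residue classes of $t=n-N$.

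Second, I recenter. By definition, $\E^{2\bin}_{n\le N}f(n)=\sum_n\frac{1}{2^{N+1}}\binom{N}{\lfloor n/2\rfloor}f(n)$, whose weights are concentrated at $n=N$ with standard deviation about $\sqrt N$. The same chain of comparisons $\beta\to\gamma\to\delta$ used in the proof of \cref{thm:binomial_weights} (via \cref{lem:changing_weights}, with $L(N)$ replaced by $N$) shows these weights are within $o(1)$ in $\ell^1$ of $g(n,N,\sqrt N)$. Fix $\epsilon>0$ and pick $C$ so that the Gaussian mass outside $|t|\le C\sqrt N$ is below $\epsilon$; this truncation costs $\Oh(\epsilon)$.

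Third, for $|t|\le C\sqrt N$ I Taylor expand:
\[
h(N+t)=h(N)+h'(N)t+\tfrac12 h''(N)t^2+\Oh\bigl(|t|^3\sup_{|s|\le C\sqrt N}|h'''(N+s)|\bigr).
\]
The remainder is $\Oh(C^3N^{3/2}\cdot N^{-2})=o(1)$. Next take $N$ among the infinitely many values supplied by \cref{lem_dioph_approx_of_f}, with integer $m$ satisfying $|h'(N)-m|\le h''(N)\sim a/N$. Then
\[
e(kh'(N)t)=e(kmt)\,e\bigl(k(h'(N)-m)t\bigr)=1+\Oh(|k|aC/\sqrt N)=1+o(1).
\]
Also $\tfrac12 h''(N)t^2=\tfrac{a}{2N}t^2+o(1)\cdot C^2$, because $Nh''(N)\to a$. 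Hence, uniformly for $|t|\le C\sqrt N$,
\[
e(kh(N+t))=e(kh(N))\,e\!\left(\tfrac{ka}{2}\tfrac{t^2}{N}\right)+o(1).
\]

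Finally, the remaining sum $\sum_{|t|\le C\sqrt N}g(t,0,\sqrt N)\,e\bigl(\tfrac{ka}{2}(t/\sqrt N)^2\bigr)$ is a Riemann sum with mesh $1/\sqrt N$ in the variable $x=t/\sqrt N$. The integrand is smooth and bounded on $[-C,C]$, so the sum converges to $\frac{1}{\sqrt{2\pi}}\int_{-C}^{C}e^{-x^2/2+\pi i ka x^2}\d x$. Extending the integral to all of $\R$ costs another $\Oh(\epsilon)$, giving the statement with $A=ka$. Apart from the possible $q$-reduction, the main technical point is that all error terms are uniform in $|t|\le C\sqrt N$ and that the Diophantine choice of $N$ kills the linear phase exactly up to $o(1)$. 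This works because the approximation quality $h''(N)\sim a/N$ is just fine enough against $|t|\lesssim\sqrt N$.
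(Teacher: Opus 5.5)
Your proposal follows the paper's proof step for step: replace the $\E^{2\bin}$ weights by $g(n,N,\sqrt N)$ using the comparisons in the proof of \cref{thm:binomial_weights}, truncate to $|n-N|\le C\sqrt N$, Taylor expand at $N$, kill the linear phase with \cref{lem_dioph_approx_of_f}, and evaluate the quadratic phase as a Riemann sum, which gives $A=k\lim_{x\to\oo}xh''(x)$ (your handling of the $C$-dependence and of the cubic remainder via $x^2h'''(x)\to -a$ is, if anything, tidier than the paper's). Your worry about $q$ is justified, because the paper simply declares $q=0$ without loss of generality; the cleanest repair is not your residue-class splitting, which multiplies the main term by a normalized complete exponential sum that can vanish unless $N$ is re-chosen, but the observation that for $k$ a multiple of an integer $D$ with $Dq(\Z)\subseteq\Z$ one has $e^{2\pi i kh(n)}=e^{2\pi i k(h-q)(n)}$, which is all the subsequent non-equidistribution argument needs.
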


\begin{proof}
Without loss of generality, assume that $h$ eventually increases to $\oo$, $k=1$, and $q(x)=0$. Let $\epsilon>0$, pick $C\in \N$ arbitrarily large and put
\[
I_N= \Z\cap [N-C\sqrt{N}, N+C\sqrt{N}]\qquad\text{and}\qquad \tilde{I}_N= \Z\cap [-C\sqrt{N},C\sqrt{N}].
\]
If $C$ is chosen sufficiently large we have
\begin{equation}\label{eq:central_limit}
\frac{1}{2^{N+1}}\sum_{n\in I_N}\binom{N}{\big\lfloor \frac{n}{2} \big\rfloor}  \geq 1-\epsilon \text{ for all sufficiently large } N.
\end{equation}
From the proof of Theorem \ref{thm:binomial_weights}, we get
\begin{align}
\E^{2\bin}_{n \le N} e^{2\pi i h(n)}
&=\frac{1}{\sqrt{2\pi N}} \sum_{n\in I_N}  e^{-\frac{\big(\frac{N-n}{\sqrt{N}}\big)^2}{2}} e^{2\pi i h(n)} + \Oh(\epsilon)+o_{N\to\oo}(1)
\\
&=\frac{1}{\sqrt{2\pi N}} \sum_{n\in \tilde{I}_N}  e^{-\frac{\big(\frac{n}{\sqrt{N}}\big)^2}{2}} e^{2\pi i h(N+n)} + \Oh(\epsilon)+o_{N\to\oo}(1).
\end{align}
Using a second-degree Taylor approximation for $h(x)$ at the point $x=N$, we have
\[
h(N+n)= h(N)+ h'(N)\cdot  n + \frac{n^2}{2}\cdot h''(N) + \Oh\Big(n^3\cdot h'''(N)\Big).
\]
Let $A = \lim_{N\to\oo}\frac{h(N)}{N\log(N)} = \lim_{N\to\oo}\frac{h''(N)}{1/N}$ and note that $A\in (0,\oo)$. The $\Oh\Big(n^3\cdot h'''(N)\Big)$ term above tends to $0$ since for $n\in \tilde{I}_N$ we have,
\begin{align*}
|n^3\cdot h'''(N)| \leq |N^{3/2}\cdot h'''(N)| =&\frac{A^{3/2}}{(h''(N))^{3/2}}\cdot (-h'''(N))\cdot (1+o_{N\to\oo}(1))\\=&2A^{3/2}\cdot\left(\frac{1}{(h''(N))^{1/2}}\right)'\cdot (1+o_{N\to\oo}(1)),
\end{align*}
and
\[
\lim_{N\to\oo}\frac{\left(\frac{1}{(h''(N))^{1/2}}\right)'}{1}=\lim_{N\to\oo}\frac{\left(\frac{1}{(h''(N))^{1/2}}\right)}{N}=\sqrt{\lim_{N\to\oo}\frac{1/N^2}{h''(N)}}=\sqrt{\lim_{N\to\oo}\frac{\log(N)}{h(N)}}=0.
\]

Using \cref{lem_dioph_approx_of_f}, we can choose $N$ arbitrarily large such that $\|h'(N)\|_{\Z}\leq h''(N)$ and hence
\[
\max_{n\in\tilde{I}_N}\{ \|h'(N) \cdot n\|_{\Z}\} \leq h''(N)\cdot \sqrt{N}.
\]
But we can also note that 
\[
\lim_{N\to\oo}h''(N)\cdot \sqrt{N} =\lim_{N\to\oo} \frac{h''(N)}{1/\sqrt{N}} = \lim_{N\to\oo} \frac{h'(N)}{2\sqrt{N}}=\lim_{N\to\oo} \frac{h(N)}{4N^{3/2}/3}=0.
\]
 Note that $n^2h''(N) = A\cdot \frac{n^2}{N}\cdot(1+o_{N\to\oo}(1))$.
Then uniformly over all $n\in\tilde{I}_N$ we have
\begin{align*}
e^{2\pi i h(N+n)}&= e^{2\pi i h(N)} \cdot e\Big(\frac{An^2}{2N}\cdot (1+o_{N\to\oo}(1))\Big)+\oh_{N\to\infty}(1)
\\
&=e^{2\pi i h(N)}  \cdot  e^{A\pi i \left(\frac{n}{\sqrt{N}}\right)^2}+\oh_{N\to\infty}(1).
\end{align*}
Combined with the above, we thus have
\begin{align}
\E^{2\bin}_{n \le N}e^{2\pi i h(n)} &=
e^{2\pi i h(N)} \cdot \Bigg(
\frac{1}{\sqrt{2\pi N}} \sum_{n\in \tilde{I}_N}  e^{-\frac{\big(\frac{n}{\sqrt{N}}\big)^2}{2}} e^{A\pi i \big(\frac{n}{\sqrt{N}}\big)^2}\Bigg) + \Oh(\epsilon)+o_{N\to\oo}(1).
\end{align}

Observe that the above sum is a Riemann sum. More precisely, we have
\[
\frac{1}{\sqrt{2\pi N}} \sum_{n\in \tilde{I}_N}  e^{-\frac{\big(\frac{n}{\sqrt{N}}\big)^2}{2}} e^{A\pi i \big(\frac{n}{\sqrt{N}}\big)^2}
=
\frac{1}{\sqrt{2\pi}}\int_{-A}^A e^{-\frac{x^2}{2}+A\pi i x^2} \d x + \oh_{N\to\infty}(1).
\]
Adding back the tails of the integral gives another error in the order of $\Oh(\epsilon)$, and so we have
\[
\E^{2\bin}_{n \le N}e^{2\pi i h(n)} = e^{2\pi i h(N)} \cdot \Bigg(\frac{1}{\sqrt{2\pi}}\int_{-\infty}^\infty e^{-\frac{x^2}{2}+A\pi i x^2} \d x \Bigg)+\Oh(\epsilon)+o_{N\to\oo}(1).
\]
\end{proof}

Recall the classical fact that 
\[
\int_{-\infty}^{\infty} e^{-\alpha x^2}\,dx
= \sqrt{\frac{\pi}{\text{Re}(\alpha)}}
\]  
when $\text{Re}(\alpha)>0$. It readily follows from equation (\ref{eq:nlogn_approx}) that 
\[
\limsup_{N\to\infty}\Big|\E^{2\bin}_{n \le N} e^{2\pi i kh(n)}\Big| >0
\]
when condition (4) holds. This completes the proof of Theorem \ref{thm_ud_of_Hardy_functions_along_2bin_means}.

\section{Proof of Theorem~\ref{thm_ergodic}}
\label{sec_6}

The main ingredient in the proof of  Theorem~\ref{thm_ergodic}, aside from our main result \cref{thm_main}, is the following proposition.

\begin{proposition}
\label{prop_2bin_ergodic_averages}
\begin{enumerate}
\item\label{itm_prop_erg_1}
For any uniquely ergodic measure preserving system $(X,\mu,T)$ and any $f \in C(X)$ we have
    \begin{equation*}
        \lim_{N \to \infty} \E_{n\leq N}^{2\bin} f(T^{n}x) =\int_X f \ d\mu,\qquad\forall x\in X.
    \end{equation*}
\item\label{itm_prop_erg_2}
For any non-atomic measure preserving system $(X,\mu,T)$ there exists a residual set of Borel sets $B$ such that
\begin{align*}
\limsup_{N \to \infty} \E_{n\leq N}^{2\bin} 1_B(T^{n}x) =1,\qquad \text{for}~\mu\text{-a.e.}~x\in X,
\\
\liminf_{N \to \infty}\E_{n\leq N}^{2\bin} 1_B(T^{n}x) =0,\qquad \text{for}~\mu\text{-a.e.}~x\in X.
\end{align*}
\end{enumerate}
\end{proposition}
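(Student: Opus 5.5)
Part~\ref{itm_prop_erg_1} will be derived from the ordinary uniquely ergodic theorem through a regularity (Toeplitz/Silverman--Toeplitz) argument. Since $(X,\mu,T)$ is uniquely ergodic, for every $x\in X$ and $f\in C(X)$ the Ces\`aro averages $\E_{n\leq N}f(T^nx)$ converge to $\int f\d\mu$. It therefore suffices to show that $\E^{2\bin}$ is stronger than Ces\`aro summation: if $\lim_{N\to\oo}\E_{n\leq N}a(n)=\ell$ for a bounded sequence $a$, then $\lim_{N\to\oo}\E^{2\bin}_{n\leq N}a(n)=\ell$. The plan is to take $W(N)=N$ in \cref{thm:Boos} with $\alpha_{n,N}=2^{-(N+1)}\binom{N}{\floor{n/2}}$. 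This requires verifying that
\[
\sup_N\sum_n n\,|\alpha_{n,N}-\alpha_{n+1,N}|<\oo.
\]
The weights $\alpha_{n,N}$ are unimodal in $n$, they are constant on the pairs $\{2m,2m+1\}$, and their differences are concentrated at $n\approx N$. So the sum is at most about $N\cdot 2\max_n\alpha_{n,N}=O(N\cdot N^{-1/2})$, which is \emph{not} bounded.

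The direct Boos route with $W(N)=N$ therefore fails, and I would go through \cref{lem:bin_stronger_than_e_sqrt} instead. Ces\`aro convergence implies convergence of $\E^{W}$ for $W(x)=e^{\sqrt{x}}$? In general it does not, since $e^{\sqrt x}$ is a stronger weight than $N$ and goes the wrong direction. A cleaner route is to use the uniform version. For uniquely ergodic systems, the averages $\E_{n\in[M,M+K]}f(T^nx)\to\int f\d\mu$ hold uniformly in $M$ as $K\to\oo$, i.e.\ the orbit is well distributed. Hence condition (i) of \cref{thm:s_implies_w_implies_bin} holds with any admissible $V$, for instance $V(N)=e^{N^{2/3}}$, since $\lim\log W/\log V=0$ and the short-interval averages over $[N-s(N),N]$ with $s(N)\to\oo$ converge. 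Applying (i)$\Rightarrow$(ii)$\Rightarrow$(iii) then gives part~\ref{itm_prop_erg_1}. The main point to check is that the uniform convergence over shifted intervals is standard for uniquely ergodic systems; it follows from the unique ergodicity argument applied to all weak$^*$ limits of $\frac1K\sum_{n=M}^{M+K}\delta_{T^nx}$.

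For part~\ref{itm_prop_erg_2}, I would use the Rosenblatt--Wierdl/del~Junco--Rosenblatt criterion for strong sweeping out of averaging operators $\sum_n\alpha_{n,N}f(T^nx)$. The criterion states: if the weights are concentrated on windows $I_N$ with $|I_N|=o(\text{location})$ and satisfy a disjointness or ``$\delta$-sweeping'' condition, then one obtains a residual family of sets $B$ with $\limsup=1$ and $\liminf=0$ a.e. Here $\alpha_{n,N}$ is essentially uniform-Gaussian on $[N-C\sqrt N,N+C\sqrt N]$, a window of relative length $N^{-1/2}\to0$. The standard route (as in \cite{LOYD23} for $\Omega$) is to use the Rohlin lemma to build, for any $\epsilon$ and large $N_1<\dots<N_J$ with disjoint windows, a set $B$ of measure $<\epsilon$ that fills most of $T^{I_{N_j}}x$ for most $x$. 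This gives $\sup_j\E^{2\bin}_{n\leq N_j}1_B(T^nx)>1-\epsilon$ on a set of measure $>1-\epsilon$. A Baire category argument then upgrades this to a residual family, with the liminf statement handled symmetrically via complements.

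The main obstacle is this Rohlin-tower construction with Gaussian rather than flat weights. I would first reduce it to flat windows using \cref{lem:changing_weights}, and then check that widely separated windows $I_{N_j}$ can be filled simultaneously inside a single tower.
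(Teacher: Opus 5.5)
\textbf{Part 1.} Your argument is correct, but it follows a different route from the paper. The paper never works with $\E^{2\bin}$ directly. Part~\ref{itm_main_1} of \cref{thm_main} with $\vartheta=\Omega$ gives $\frac1N\sum_{n\le N}f(T^{\Omega(n)}x)=\E^{2\bin}_{n\le\lfloor\log\log N\rfloor}f(T^nx)+\oh(1)$. The left side converges for every $x$ by \cite[Theorem~A]{BR22}, and $\lfloor\log\log N\rfloor$ runs through all large integers.

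You stay on the summability side instead. Unique ergodicity makes every orbit well distributed (apply uniform convergence of ergodic averages at the points $T^Mx$). This verifies condition (i) of \cref{thm:s_implies_w_implies_bin} with $V(N)=e^{N^{2/3}}$, which meets the hypotheses of \cref{thm:mikey_thm}, and (i)$\Rightarrow$(ii)$\Rightarrow$(iii) then finishes. Your route trades the number-theoretic input of \cite{BR22} for \cite[Theorem~C]{reilly26} and \cref{lem:bin_stronger_than_e_sqrt}. You were also right to drop the Boos attempt with $W(N)=N$.

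In fact the detour through $e^{\sqrt x}$ is unnecessary. The weights $\alpha_{n,N}=2^{-(N+1)}\binom{N}{\floor{n/2}}$ are unimodal in $n$, so their total variation is $2\max_n\alpha_{n,N}=\Oh(N^{-1/2})$. Hence for fixed $K$ you may replace $f(T^nx)$ by $\frac1K\sum_{j<K}f(T^{n+j}x)$ at cost $\Oh(KN^{-1/2})$, and well-distribution concludes.

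\textbf{Part 2.} As written there is a gap: the sweeping set is asserted but never constructed, and that construction is the whole content of the claim. What is missing is a combinatorial statement in $\Z$ followed by a Rohlin transfer. For instance:
\begin{itemize}
\item Fix $C$ so that $I_N=[N-C\sqrt N,N+C\sqrt N]$ carries $\alpha$-mass $>1-\epsilon$ for all large $N$.
\item Given $N_0$, take $J>4/\epsilon$ and $\ell\in4\N$ with $C\sqrt{N_0+J\ell}\le\ell/4$.
\item Put $N_j=N_0+j\ell/2$ for $1\le j\le J$, $P=J\ell/2$, and $E=\bigcup_{k\in\Z}[kP,kP+\ell)$.
\end{itemize}
Then $E$ has density $2/J$. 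The intervals $[kP+\ell/4-N_j,\,kP+3\ell/4-N_j)$, for $1\le j\le J$ and $k\in\Z$, tile $\R$, so every $m\in\Z$ has some $j$ with $m+I_{N_j}\subset E$. Sublinearity $\sqrt N=\oh(N)$ is exactly what makes the condition on $\ell$ solvable. What matters is not that the windows are ``widely separated'' but that the translates $E-N_j$ tile $\Z$.

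Lifting $E$ to a Rohlin tower of height $H\gg N_J$ gives $B$ with $\mu(B)<\epsilon$ and $\max_{j\le J}\E^{2\bin}_{n\le N_j}1_B(T^nx)>1-\epsilon$ outside a set of measure $<2\epsilon$. From there your Baire argument (open dense sets in the measure algebra, complements for the $\liminf$) goes through.

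Two further corrections:
\begin{itemize}
\item \cref{lem:changing_weights} cannot turn $2\bin$ weights into flat windows: on any window carrying most of the mass, the ratio of the two weights is not $1+\oh(1)$. No conversion is needed, though, since positivity gives $\E^{2\bin}_{n\le N}1_B(T^nx)\ge\sum_{n\in I_N}\alpha_{n,N}$ whenever $T^nx\in B$ for all $n\in I_N$.
\item The Rohlin lemma needs aperiodicity, which non-atomicity alone does not give. Some such hypothesis is genuinely required: for $T=\mathrm{id}$ the averages converge to $1_B(x)$, so no $B$ can have $\limsup=1$ and $\liminf=0$ almost everywhere.
\end{itemize}

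The paper sidesteps all of this. It transfers the strong sweeping out of $\Omega(n)$ from \cite{LOYD23} through the same identity as in Part~1. This works because $L(N)=\lfloor\log\log N\rfloor$ is eventually nondecreasing with steps in $\{0,1\}$, so $\limsup$ and $\liminf$ along $L(N)$ coincide with those over all $N$.
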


\begin{proof}
We will give an ``indirect'' proof of \cref{prop_2bin_ergodic_averages} by combining existing results in the literature concerning ergodic averages along $\Omega(n)$ with \cref{thm_main}.
It is worth mentioning that one could also prove \cref{prop_2bin_ergodic_averages} directly without mentioning results concerning $\Omega(n)$.

By \cref{thm_main} we have
\begin{equation}
\label{eqn_dynamical_omega_to_2bin}
\frac{1}{N} \sum_{n = 1}^N f(T^{\Omega(n)}x) =\E_{n\leq \lfloor \log\log N\rfloor}^{2\bin} f(T^{n}x) + \oh_{N\to\infty}(1).
\end{equation}
By \cite[Theorem A]{BR22}, for any uniquely ergodic system $(X,\mu,T)$ and any $f \in C(X)$,
\begin{equation}
\lim_{N\to\infty} \frac{1}{N} \sum_{n = 1}^N f(T^{\Omega(n)}x)=\int_X f \ d\mu,\qquad\forall x\in X.
\end{equation}
Since any subsequence of a convergent sequence is convergent, we also have
\begin{equation}
\lim_{N\to\infty} \frac{1}{\exp(\exp(N))} \sum_{n = 1}^{\exp(\exp(N))} f(T^{\Omega(n)}x)=\int_X f \ d\mu,\qquad\forall x\in X.
\end{equation}
Combined with \eqref{eqn_dynamical_omega_to_2bin}, this proves that
\begin{equation}
\lim_{N\to\infty} \E_{n\leq N}^{2\bin} f(T^{n}x) = \int_X f \ d\mu,\qquad\forall x\in X,
\end{equation}
and part \ref{itm_prop_erg_1} of \cref{prop_2bin_ergodic_averages} follows.

For part \ref{itm_prop_erg_2} we follow a similar strategy. 
By \cite{LOYD23}, we know that for any non-atomic measure preserving system $(X,\mu,T)$ there exists a residual set of Borel sets $B$ such that
\begin{align*}
\limsup_{N \to \infty} \frac{1}{N} \sum_{n = 1}^N 1_B(T^{\Omega(n)}x) =1,\qquad \text{for}~\mu\text{-a.e.}~x\in X,
\\
\liminf_{N \to \infty} \frac{1}{N} \sum_{n = 1}^N 1_B(T^{\Omega(n)}x) =0,\qquad \text{for}~\mu\text{-a.e.}~x\in X.
\end{align*}
By \eqref{eqn_dynamical_omega_to_2bin}, we obtain 
\begin{align*}
\limsup_{N \to \infty} \E_{n\leq \lfloor \log\log N\rfloor}^{2\bin} 1_B(T^{n}x) =1,\qquad \text{for}~\mu\text{-a.e.}~x\in X,
\\
\liminf_{N \to \infty}\E_{n\leq \lfloor \log\log N\rfloor}^{2\bin} 1_B(T^{n}x) =0,\qquad \text{for}~\mu\text{-a.e.}~x\in X,
\end{align*}
and hence part \ref{itm_prop_erg_2} follows.
\end{proof}

\begin{proof}[Proof of \cref{thm_ergodic}]
Assume $\vartheta\colon \N\to\N$ satisfies \eqref{gaussian_condition} for some $L\in\mathscr{L}$.
By \cref{thm_main}, we have
\begin{equation}
\label{eqn_dynamical_vartheta_to_2bin}
\frac{1}{N} \sum_{n = 1}^N f(T^{\vartheta(n)}x) =\E_{n\leq L(N)}^{2\bin} f(T^{n}x) + \oh_{N\to\infty}(1).
\end{equation}
As shown in \cref{prop_2bin_ergodic_averages}, the conclusion of parts~\ref{itm_thm_erg_1} and~\ref{itm_thm_erg_2} of \cref{thm_ergodic} holds with $\frac{1}{N} \sum_{n = 1}^N f(T^{\vartheta(n)}x)$ replaced by $\E_{n\leq L(N)}^{2\bin} f(T^{n}x)$. Then by \eqref{eqn_dynamical_vartheta_to_2bin}, it also holds for $\frac{1}{N} \sum_{n = 1}^N f(T^{\vartheta(n)}x)$.

Finally, note that it also follows from \cref{thm_main} that
\begin{equation}
\label{eqn_dynamical_vartheta_to_Cesaro}
\mathbb{E}^{W}_{n\leq N}\,  f(T^{\vartheta(n)}x) = \frac{1}{N} \sum_{n = 1}^N f(T^{n}x) + \oh_{N\to\infty}(1).
\end{equation}
So part~\ref{itm_thm_erg_3} of \cref{thm_ergodic} follows from Birkhoff's pointwise ergodic theorem.
\end{proof}

\bibliographystyle{aomalphanomr}
\bibliography{mynewlibrary.bib}

\providecommand{\bysame}{\leavevmode\hbox to3em{\hrulefill}\thinspace}
\providecommand{\noopsort}[1]{}
\providecommand{\zbl}[1]{\href{http://www.zentralblatt-math.org/zmath/en/search/?q=an:#1}{Zbl~#1}}
\providecommand{\jfm}[1]{\href{http://www.emis.de/cgi-bin/JFM-item?#1}{JFM~#1}}
\providecommand{\arxiv}[1]{\href{http://www.arxiv.org/abs/#1}{arXiv~#1}}
\providecommand{\doi}[1]{\url{https://doi.org/#1}}
\providecommand{\href}[2]{#2}
\begin{thebibliography}{DMR09}

\bibitem[BKS19]{BKS19}
\bgroup\scshape{}V.~Bergelson\egroup{}, \bgroup\scshape{}G.~Kolesnik\egroup{}, and \bgroup\scshape{}Y.~Son\egroup{}, Uniform distribution of subpolynomial functions along primes and applications,  \emph{J. Anal. Math.} \textbf{137} no.~1 (2019), 135--187. \doi{10.1007/s11854-018-0068-1}.

\bibitem[BR22]{BR22}
\bgroup\scshape{}V.~Bergelson\egroup{} and \bgroup\scshape{}F.~Richter\egroup{}, Dynamical generalizations of the prime number theorem and disjointness of additive and multiplicative semigroup actions.,  \emph{Duke Math. J.} \textbf{171} no.~15 (2022), 3133--3200. \doi{10.1215/00127094-2022-0055}.

\bibitem[BM16]{BM16}
\bgroup\scshape{}V.~Bergelson\egroup{} and \bgroup\scshape{}J.~Moreira\egroup{}, Van der {C}orput's difference theorem: some modern developments,  \emph{Indag. Math. (N.S.)} \textbf{27} no.~2 (2016), 437--479. \doi{10.1016/j.indag.2015.10.014}.

\bibitem[BC00]{Boos}
\bgroup\scshape{}J.~Boos\egroup{} and \bgroup\scshape{}P.~Cass\egroup{}, \emph{Classical and Modern Methods in Summability}, Oxford University Press, 11 2000. \doi{10.1093/oso/9780198501657.001.0001}.

\bibitem[Bos81]{Boshernitzan81}
\bgroup\scshape{}M.~Boshernitzan\egroup{}, An extension of {H}ardy's class {$L$} of ``orders of infinity'',  \emph{J. Analyse Math.} \textbf{39} (1981), 235--255. \doi{10.1007/BF02803337}.

\bibitem[Bos82]{Boshernitzan82}
\bgroup\scshape{}M.~Boshernitzan\egroup{}, New ``orders of infinity'',  \emph{J. Analyse Math.} \textbf{41} (1982), 130--167. \doi{10.1007/BF02803397}.

\bibitem[Bos87]{Boshernitzan_preprint}
\bgroup\scshape{}M.~Boshernitzan\egroup{}, Uniform distribution, averaging methods and {H}ardy fields, unpublished, 1987.

\bibitem[Bos94]{Boshernitzan94}
\bgroup\scshape{}M.~Boshernitzan\egroup{}, Uniform distribution and {H}ardy fields,  \emph{J. Anal. Math.} \textbf{62} (1994), 225--240. \doi{10.1007/BF02835955}.

\bibitem[DMR09]{DMR09}
\bgroup\scshape{}M.~Drmota\egroup{}, \bgroup\scshape{}C.~Mauduit\egroup{}, and \bgroup\scshape{}J.~Rivat\egroup{}, Primes with an average sum of digits,  \emph{Compositio Mathematica} \textbf{145} no.~2 (2009), 271--292. \doi{10.1112/S0010437X08003898}.

\bibitem[Erd48]{Erdos48a}
\bgroup\scshape{}P.~Erd\H{o}s\egroup{}, On the integers having exactly {$K$} prime factors,  \emph{Ann. of Math. (2)} \textbf{49} (1948), 53--66. \doi{10.2307/1969113}.

\bibitem[FM05]{FM05}
\bgroup\scshape{}E.~Fouvry\egroup{} and \bgroup\scshape{}C.~Mauduit\egroup{}, Sur les entiers dont la somme des chiffres est moyenne,  \emph{Journal of Number Theory} \textbf{114} (2005), 135--152. \doi{10.1016/j.jnt.2005.03.007}.

\bibitem[Fra09]{Frantzikinakis09}
\bgroup\scshape{}N.~Frantzikinakis\egroup{}, Equidistribution of sparse sequences on nilmanifolds,  \emph{J. Anal. Math.} \textbf{109} (2009), 353--395. \doi{10.1007/s11854-009-0035-y}.

\bibitem[Gaj16]{Gajser16}
\bgroup\scshape{}D.~Gajser\egroup{}, On convergence of binomial means, and an application to finite markov chains,  \emph{Ars Math. Contemp.} \textbf{10} (2016), 393--410. Available at \url{https://api.semanticscholar.org/CorpusID:55133538}.

\bibitem[GK91]{GK91}
\bgroup\scshape{}S.~W. Graham\egroup{} and \bgroup\scshape{}G.~Kolesnik\egroup{}, \emph{van der {C}orput's method of exponential sums}, \emph{London Mathematical Society Lecture Note Series} \textbf{126}, Cambridge University Press, Cambridge, 1991. \doi{10.1017/CBO9780511661976}.

\bibitem[Har49]{H49}
\bgroup\scshape{}G.~H. Hardy\egroup{}, \emph{Divergent Series}, Clarendon Press, Oxford, 1949, Reprinted by AMS Chelsea Publishing, Providence, RI, 1991/2000.

\bibitem[Loy23]{LOYD23}
\bgroup\scshape{}K.~Loyd\egroup{}, A dynamical approach to the asymptotic behavior of the sequence $\omega (n)$,  \emph{Ergodic Theory and Dynamical Systems} \textbf{43} no.~11 (2023), 3685–3706. \doi{10.1017/etds.2022.81}.

\bibitem[LM25]{LM25}
\bgroup\scshape{}K.~Loyd\egroup{} and \bgroup\scshape{}S.~Mondal\egroup{}, Ergodic averages along sequences of slow growth,  \emph{Journal of the London Mathematical Society} \textbf{111} no.~3 (2025), e70124. \doi{https://doi.org/10.1112/jlms.70124}.

\bibitem[Rei26]{reilly26}
\bgroup\scshape{}M.~Reilly\egroup{}, Uniform weighted averages and a conjecture of {B}ergelson, {M}oreira, and {R}ichter, 2026. Available at \url{https://arxiv.org/abs/2602.20606}.

\bibitem[SF14]{Spencer14}
\bgroup\scshape{}J.~Spencer\egroup{} and \bgroup\scshape{}L.~Florescu\egroup{}, \emph{Asymptopia}, \emph{Student mathematical library}, American Mathematical Society, 2014. Available at \url{https://books.google.com/books?id=g20CkAEACAAJ}.

\end{thebibliography}



\bigskip
\footnotesize
\noindent
Vitaly Bergelson\\
\textsc{The Ohio State University}\\
\href{mailto:vitaly@math.ohio-state.edu}
{\texttt{vitaly@math.ohio-state.edu}}

\bigskip
\noindent
Michael Reilly\\
\textsc{The Ohio State University}\\
\href{mailto:reilly.201@osu.edu}
{\texttt{reilly.201@osu.edu}}

\bigskip
\noindent
Florian K.\ Richter\\
\textsc{{\'E}cole Polytechnique F{\'e}d{\'e}rale de Lausanne (EPFL)}\\
\href{mailto:f.richter@epfl.ch}
{\texttt{f.richter@epfl.ch}}

\end{document}